\newtheorem{thm}{Theorem}[section]
\newtheorem{prop}[thm]{Proposition}
\newtheorem{lem}[thm]{Lemma}
\newtheorem{cor}[thm]{Corollary}
\newtheorem{conj}[thm]{Conjecture}
\theoremstyle{definition}
\newtheorem{definition}[thm]{Definition}
\newtheorem{remark}[thm]{Remark}
\newtheorem{ex}[thm]{Example}
\numberwithin{equation}{section}
\renewcommand{\to}{\xymatrix@1@=15pt{\ar[r]&}}
\renewcommand{\rightarrow}{\xymatrix@1@=15pt{\ar[r]&}}
\renewcommand{\leftarrow}{\xymatrix@1@=15pt{&\ar[l]}}
\renewcommand{\mapsto}{\xymatrix@1@=15pt{\ar@{|->}[r]&}}
\renewcommand{\twoheadrightarrow}{\xymatrix@1@=18pt{\ar@{->>}[r]&}}
\renewcommand{\hookrightarrow}{\xymatrix@1@=15pt{\ar@{^(->}[r]&}}
\newcommand{\congpf}{\xymatrix@L=0.6ex@1@=15pt{\ar[r]^-\sim&}}
\renewcommand{\cong}{\simeq}
\newcommand{\kn}{\mathcal N}
\newcommand{\kt}{\mathcal T}
\newcommand{\ko}{\mathcal O}
\newcommand{\QQ}{\mathbb Q}
\newcommand{\CC}{\mathbb C}
\newcommand{\ZZ}{\mathbb Z}
\newcommand{\RR}{\mathbb R}
\newcommand{\PP}{\mathbb P}
\DeclareMathOperator{\Ima}{Im}
\DeclareMathOperator{\Ker}{Ker}
\newcommand{\Pic}{\mathrm{Pic}}
\newcommand{\Gr}{\mathrm{Gr}}
\newcommand{\nilp}{\mathrm{nilp}}
\newcommand{\level}{\mathrm{level}}
\newcommand{\TBC}[1]{}
\begin{document}

\title[]{Lagrangian fibrations}

\author[D.\ Huybrechts, M.\ Mauri, ]{ D.\ Huybrechts \& M.\ Mauri}

\address{Mathematisches Institut, Universit\"at Bonn, Endenicher Allee 60, 53115 Bonn, Germany \& Max Planck Institute for Mathematics, Vivatsgasse 7, 53111 Bonn, Germany}
\email{huybrech@math.uni-bonn.de \& mauri@mpim-bonn.mpg.de}

\begin{abstract} \noindent
We review the theory of Lagrangian fibrations of hyperk\"ahler manifolds
as initiated by Matsushita \cite{Matsushita:Fibrations,Matsushita:Addendum,Mat05:Matsushitahigher}. We also discuss more recent work of
Shen--Yin \cite{ShYi18:ShenYin} and Harder--Li--Shen--Yin \cite{HLSY19:ShenYin}.
Occasionally, we give alternative arguments and complement the discussion by additional observations.
 \vspace{-2mm}
\end{abstract}

\maketitle
{\let\thefootnote\relax\footnotetext{This review was prepared in the context of the
seminar organized by the ERC Synergy Grant HyperK, Grant agreement ID 854361.
The talk was delivered on June 4, 2021.}}
\marginpar{}

Assume $f\colon X\to B$  is a Lagrangian fibration of a compact hyperk\"ahler manifold $X$ of complex dimension $2n$, and $\pi\colon \mathcal{X} \to \Delta$ is a type III degeneration of compact hyperk\"{a}hler manifolds of complex dimension $2n$. Then the cohomology algebra of $\PP^n$ appears naturally in (at least) four different guises:
\smallskip

 (i) As the cohomology algebra of $(0,p)$ resp.\ $(p,0)$-forms (both independent of  $f$):$$H^*(\PP^n,\CC)\cong H^*(X,\ko_X)\text{ and } H^*(\PP^n,\CC)\cong H^0(X,\Omega^\ast_X).$$

(ii) As the cohomology of the base of the fibration:\footnote{Here and in (iii) and (iv), one expects isomorphisms of $\QQ$-algebras, but this seems not known.}
$$H^*(\PP^n,\CC)\cong H^*(B,\CC).$$

(iii) As the image of the restriction to the generic fiber $X_t$ of $f$:
$$H^*(\PP^n,\CC)\cong \Ima\left(H^*(X,\CC)\to H^*(X_t,\CC)\right).$$

(iv) As the cohomology of the dual complex $\mathcal{D}(\mathcal{X}_0)$ of the central fiber $\mathcal{X}_0$ of $\pi$:
$$H^*(\PP^n,\CC)\cong H^*(\mathcal{D}(\mathcal{X}_0), \CC).$$

In this survey we discuss these four situations and explain how they are related. We start by reviewing basic results on Lagrangian fibrations in Section \ref{sec:Basics},
discuss the topology of the base and the restriction to the fiber
in Section \ref{sec:Hbasefiber}, and then explain in Section \ref{sec:PW} how the various occurrences of $\PP^n$ are related, by sketching the proof of a key identity called P$=$W.
\smallskip

Throughout, $X$ denotes a compact hyperk\"ahler manifold of complex dimension $2n$.
A fibration of $X$ is a surjective morphism $f\colon X\twoheadrightarrow B$ with connected fibers onto a normal variety $B$ with $0<\dim(B)<2n$. 
A submanifold $T\subset X$ of dimension $n$ is Lagrangian if the restriction $\sigma|_T\in H^0(T,\Omega_T^2)$ of the holomorphic two-form $\sigma\in H^0(X,\Omega_X^2)$ is zero.

\section{Basics on Lagrangian fibrations}\label{sec:Basics}
We first discuss Lagrangian submanifolds and in particular Lagrangian tori. Then we study the cohomology and the singularities of the base $B$. 
Next we show that the fibers, smooth ones as well as singular ones, of any fibration are Lagrangian and conclude that fibrations of hyperk\"ahler manifolds over a smooth base are flat.

At the end, we mention further results and directions without proof:
Matshushita's description of the higher direct image sheaves $R^if_\ast\ko_X$,
Beauville's question whether Lagrangian tori are always Lagrangian fibers, 
smoothness of the base, etc.

\subsection{Lagrangian tori} We start with some general comments on Lagrangian manifolds and more specifically on Lagrangian tori.

\begin{prop}[Voisin]
Any Lagrangian submanifold $T\subset X$ of a hyperk\"ahler manifold $X$ is
projective. In particular, any Lagrangian torus is an abelian variety.
\end{prop}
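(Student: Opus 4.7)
The plan is to invoke Kodaira's embedding theorem by producing a rational Kähler class on $T$. As a compact complex submanifold of the Kähler manifold $X$, $T$ is itself Kähler by restriction, so it suffices to find a class in $H^{1,1}(T,\RR)\cap H^2(T,\QQ)$ lying in the Kähler cone $\mathcal{K}_T$.

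The key observation is that the Lagrangian hypothesis, combined with the defining feature $h^{2,0}(X)=1$ of hyperkähler manifolds, forces the restriction map
\[ r\colon H^2(X,\RR)\longrightarrow H^2(T,\RR) \]
to land in $H^{1,1}(T,\RR)$. Indeed, writing any real class $\alpha\in H^2(X,\RR)$ as $\alpha=a\sigma+\alpha^{1,1}+\bar a\bar\sigma$ via the Hodge decomposition and $H^{2,0}(X)=\CC\sigma$, the condition $\sigma|_T=0$ gives $r(\sigma)=r(\bar\sigma)=0$, hence $r(\alpha)=r(\alpha^{1,1})$; and the restriction of a real $(1,1)$-class to the complex submanifold $T$ remains of type $(1,1)$.

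Now $V:=r(H^2(X,\QQ))\subset H^2(T,\QQ)$ is a rational subspace whose real extension $V\otimes_\QQ\RR$ coincides with $r(H^2(X,\RR))$, hence sits inside $H^{1,1}(T,\RR)$. For any Kähler class $\omega$ on $X$, the restriction $\omega|_T$ is a Kähler class on $T$ lying in $V\otimes\RR$, so $\mathcal{K}_T\cap(V\otimes\RR)$ is a nonempty open subset of $V\otimes\RR$; density of the rational points $V$ in $V\otimes\RR$ then produces a rational class inside $\mathcal{K}_T$, namely the desired rational Kähler class.

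For the final assertion, a Lagrangian torus $T$ is a compact Kähler manifold diffeomorphic to a real $2n$-torus, so Hodge symmetry gives $h^{0,1}(T)=n$ and a standard Albanese argument identifies $T$ with the $n$-dimensional complex torus $\mathrm{Alb}(T)$; combined with the projectivity just established, this makes $T$ an abelian variety. The essential step in the whole argument is the reduction to $H^{1,1}$ via $h^{2,0}(X)=1$; everything else is the density of rational points in a rational subspace, plus standard facts about Kähler cones and Albanese varieties.
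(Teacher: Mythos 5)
Your proposal is correct and follows essentially the same route as the paper: both arguments use the Lagrangian condition together with $H^{2,0}(X)=\CC\sigma$ to show the restriction $H^2(X,\RR)\to H^2(T,\RR)$ lands in $H^{1,1}(T,\RR)$ with image defined over $\QQ$, and then approximate $\omega|_T$ by rational classes inside the open Kähler cone. Your phrasing via the rational subspace $V$ and density of its rational points is just a slightly more explicit version of the paper's ``comes arbitrarily close'' step.
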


\begin{proof} We follow the proof as presented in \cite{Ca06:CampanaIso}.
Since the restriction of any K\"ahler class on $X$ to $T$ is non-trivial, the restriction $H^2(X,\RR)\to H^2(T,\RR)$ is a non-trivial morphism of Hodge structures. On the other hand, as $T$ is Lagrangian, all classes in $H^{2,0}(X)\oplus H^{0,2}(X)$ have trivial restrictions. Hence, the image of $H^2(X,\RR)\to H^2(T,\RR)$ is contained in $H^{1,1}(T,\RR)$. More precisely, the images of $H^2(X,\RR)\to H^2(T,\RR)$ and of $H^{1,1}(X,\RR)\to H^{1,1}(T,\RR)$ coincide. Therefore, for any K\"ahler class $\omega\in H^{1,1}(X,\RR)$ there exists a rational class $\alpha\in H^2(X,\QQ)$ such that the $(1,1)$-class
$\alpha|_T$ comes arbitrarily close to the K\"ahler class $\omega|_T$. Thus, $\alpha|_T$ is a rational K\"ahler class and, hence, $T$ is projective.\end{proof}


\begin{remark}
The normal bundle of  a Lagrangian submanifold $T\subset X$ is isomorphic to the cotangent bundle of $T$, so $\kn_{T/X}\cong\Omega_T$. Hence, the $(1,1)$-part of the restriction
map $ H^2(X,\CC)\to H^2(T,\CC)$ can be identified with the natural map
$H^1(X,\kt_X)\to H^1(T,\kn_{T/X})$ that sends a first order deformation of $X$ to the obstruction to deform $T$ sideways with it, see \cite{Voi92:VoisinStabilityLagr}:
$$\xymatrix{H^{1,1}(X)\ar[d]^{\cong}\ar[r]&H^{1,1}(T)\ar[d]^{\cong}\\
H^1(X,\kt_X)\ar[r]& H^1(T,\kn_{T/X}).}$$
Clearly, as $T$ is Lagrangian, the map $(H^{2,0}\oplus H^{0,2})(X)\to H^2(T,\CC)$ is trivial, see \ the proof above. Since the restriction of a K\"ahler class is again K\"ahler, $H^{1,1}(X)\to H^{1,1}(T)$
is certainly not trivial. Thus, $T\subset X$ deforms with $X$ along a subset of codimension at least one. For smooth fibers of a Lagrangian fibration, so
eventually Section \ref{sec:More} for all Lagrangian tori, the rank of the restriction map  and hence the codimension of the image $\text{Def}(T\subset X)\to\text{Def}(X)$ is exactly one.\footnote{Is there an a priori reason why this is the case
for Lagrangian tori? It fails for general Lagrangian submanifolds; see \S\ref{sec:ex}.}
\end{remark}

\begin{prop}
Assume $T\subset X$ is a Lagrangian torus. Then the restrictions ${\rm c}_i(X)|_T\in H^{2i}(T,\RR)$
of the Chern classes ${\rm c}_i(X)\in H^{2i}(X,\RR)$  are trivial.
\end{prop}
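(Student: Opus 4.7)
My plan is to use the normal bundle exact sequence together with the Lagrangian condition, and then invoke the Whitney sum formula for total Chern classes. The point is that both the tangent and the normal bundle of $T$ turn out to be holomorphically trivial, so the restriction $\mathcal{T}_X|_T$ is an extension of trivial bundles.

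First I would recall the normal bundle sequence
$$0\to \mathcal{T}_T\to \mathcal{T}_X|_T\to \mathcal{N}_{T/X}\to 0.$$
Since $T$ is a complex torus (an abelian variety in fact, by the previous proposition), its holomorphic tangent bundle is trivial: choosing a uniformization $T=V/\Lambda$, one has $\mathcal{T}_T\cong V\otimes_\CC \mathcal{O}_T\cong \mathcal{O}_T^{\oplus n}$, and consequently $\Omega_T\cong \mathcal{O}_T^{\oplus n}$ as well. The Lagrangian hypothesis enters through the isomorphism $\mathcal{N}_{T/X}\cong \Omega_T$ recalled in the preceding remark (which is a standard consequence of the non-degeneracy of $\sigma$: the contraction with $\sigma$ identifies $\mathcal{T}_X|_T/\mathcal{T}_T$ with $\Omega_T$, since $\mathcal{T}_T$ is its own annihilator under $\sigma|_{T}$, which vanishes). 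Thus the sequence above reads
$$0\to \mathcal{O}_T^{\oplus n}\to \mathcal{T}_X|_T\to \mathcal{O}_T^{\oplus n}\to 0.$$

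Now I would apply the Whitney product formula to conclude
$$c(\mathcal{T}_X|_T)=c(\mathcal{T}_T)\cdot c(\mathcal{N}_{T/X})=1\cdot 1=1,$$
so that $c_i(X)|_T=c_i(\mathcal{T}_X|_T)=0$ for every $i\ge1$.

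There is essentially no hard step here, and nothing to grind through: the proof reduces to two structural inputs, namely the triviality of the tangent bundle of a complex torus and the Lagrangian identification $\mathcal{N}_{T/X}\cong \Omega_T$. The only conceptual point to be careful about is the latter; but this is precisely the content of the remark preceding the statement, so it may be invoked without further discussion.
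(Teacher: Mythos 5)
Your proof is correct and follows exactly the same route as the paper: the normal bundle sequence with the Whitney formula, combined with the Lagrangian identification $\mathcal{N}_{T/X}\cong\Omega_T$ and the triviality of the tangent (hence cotangent) bundle of a complex torus. You simply spell out the details the paper leaves implicit.
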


\begin{proof}
The normal bundle sequence allows one to compute the restriction of the total
Chern class of $X$  to $T$. More precisely, ${\rm c}(\kt_X)|_T={\rm c}(\kt_T)\cdot{\rm c}({\mathcal N}_{T/X})$. To conclude, use ${\mathcal N}_{T/X}\cong\Omega_T$ and the fact that the tangent bundle of a torus is trivial.
\end{proof}

\begin{remark} (i) In the case when $T\subset X$ is the fiber of a Lagrangian fibration
$f\colon X\to B$, as it always is, see \ Section \ref{sec:More}, the restriction of the Beauville--Bogomolov--Fujiki form, thought of as a class  $\tilde q\in H^4(X,\QQ)$, is also trivial:
$$\tilde q|_T=0.$$ 
There does not seem to be a direct proof of this fact. However, using that the rank
of the restriction map $H^4(X,\QQ)\to H^4(T,\QQ)$ is one, see Theorem \ref{prop:CohBFibre}, it can
be shown as follows.
The classes $\tilde q$ and ${\rm c}_2$ in $H^4(X,\QQ)$ both have the distinguished property that the homogenous forms  $\int_X\tilde q\cdot\beta^{2n-2}$ and  $\int{\rm c}_2(X)\cdot\beta^{2n-2}$ defined on $H^2(X,\ZZ)$
are non-trivial scalar multiples  of $q(\beta)^{n-1}$ and, therefore, of each other.\footnote{The non-triviality of the scalar for ${\rm c}_2(X)$ follows from the fact that $\int_X{\rm c}_2(X)\cdot \omega^{2n-2}\ne0$ for any K\"ahler class $\omega$.}
If $[T]\in H^{2n}(X,\ZZ)$ is the class of a fiber $f^{-1}(t)$, then up to scaling $[T]=f^\ast\alpha^n$ for some $\alpha\in H^2(B,\QQ)$. Hence, for a K\"ahler class $\omega$ on $X$ we find (up to a non-trivial scalar factor)$$\int_T\tilde q|_T\cdot
\omega|_T^{n-2}=\int_X\tilde q\cdot f^\ast\alpha^n\cdot\omega^{n-2}=\int_X{\rm c}_2(X)
\cdot f^\ast\alpha^n\cdot\omega^{n-2}=\int_T{\rm c}_2(X)|_T\cdot\omega|_T^{n-2}=0.$$ 
Since $\omega|_T\ne0$ and $\Ima\left(H^\ast(X,\RR)\to H^*(T,\RR)\right)$ is generated by $\omega|_T$, this proves the claim.
\smallskip

(ii) For other types of Lagrangian submanifolds, the restrictions of the Chern classes of $X$ are not trivial. For example, for a Lagrangian plane $\PP^2\subset X$ one easily computes $\int_{\PP^2}{\rm c}_2(X)|_{\PP^2}=15$.
\end{remark}

As remarked before, the normal bundle of a Lagrangian torus is trivial.
The next observation  can be seen as a converse, it applies in particular to the smooth fibers of any fibration $f\colon X\to B$.

\begin{lem}\label{rem:Laggivestorus}
Assume $T\subset X$ is Lagrangian submanifold with trivial normal bundle. Then
$T$ is a complex torus and, therefore, an abelian variety.
\end{lem}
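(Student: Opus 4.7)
My plan is as follows. Since $T\subset X$ is Lagrangian, the holomorphic symplectic form $\sigma$ on $X$ induces an isomorphism $\kn_{T/X}\cong\Omega_T$ via $v\mapsto \iota_v\sigma|_T$ -- this is the identification already recorded in the Remark preceding the second Proposition. The hypothesis that $\kn_{T/X}$ is holomorphically trivial therefore forces $\Omega_T$, and dually $\kt_T$, to be holomorphically trivial, i.e.\ $T$ is holomorphically parallelizable.

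It then remains to invoke the classical fact that a compact K\"ahler manifold with holomorphically trivial tangent bundle is a complex torus; note that $T$ is K\"ahler, inheriting the structure from $X$ by restriction. One clean way to package this is via the Albanese map. Pick a global frame $\omega_1,\ldots,\omega_n\in H^0(T,\Omega_T)$. Each $\omega_i$ is holomorphic, hence $d$-closed on the compact K\"ahler manifold $T$, so the Albanese morphism $\alpha\colon T\to \mathrm{Alb}(T)=H^0(T,\Omega_T)^\ast/H_1(T,\ZZ)$ is defined. Its differential at any point $p$ is the evaluation map $v\mapsto (\omega_i(v))_i$, which is an isomorphism because the $\omega_i$ form a frame. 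Thus $\alpha$ is \'etale, and since $T$ is compact with $\dim T=\dim\mathrm{Alb}(T)=n$, it is a finite \'etale cover of an $n$-dimensional complex torus. Passing to universal covers, both sides are covered by $\CC^n$, so $T$ itself is a quotient of $\CC^n$ by a cocompact lattice, i.e.\ a complex torus. The abelian variety statement is then immediate from Voisin's Proposition above.

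The only genuinely non-trivial step is the Wang-type assertion in the second paragraph -- parallelizable plus K\"ahler forces torus -- and the Albanese argument sketched above seems to be the most economical way to extract it using only the tools already available in the section. A minor subtlety to verify en route is that a proper \'etale morphism between connected complex manifolds of equal dimension is automatically a covering map, but this is standard (Ehresmann). I expect this Wang step, rather than the symplectic identification of the normal bundle, to be the conceptual heart of the argument.
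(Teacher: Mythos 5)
Your proposal is correct and follows essentially the same route as the paper: the Lagrangian condition gives $\kt_T\cong\kn^\ast_{T/X}$, so the normal bundle being trivial makes $T$ parallelizable, and the Albanese morphism then shows that a compact K\"ahler manifold with trivial tangent bundle is a complex torus. You have merely expanded the Albanese step that the paper leaves as a one-line assertion, and your expansion is sound.
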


\begin{proof}
Since $T$ is Lagrangian, the tangent bundle  $\kt_T\cong{\mathcal N}^\ast_{T/X}$ is trivial. Using the Albanese morphism, one easily proves that any compact K\"ahler manifold with trivial tangent bundle is a complex torus.
\end{proof}

\subsection{The base of a fibration} We pass on to (Lagrangian) fibrations.

\begin{prop}[Matsushita]\label{prop:Matsub2}
Assume $f\colon X\twoheadrightarrow B$ is a fibration with
$B$ smooth. Then $B$ is a simply connected, smooth projective variety of dimension $n$ satisfying
$H^{p,0}(B)=H^{0,p}(B)=0$ for all $p>0$ and $H^2(B,\QQ)\cong\QQ$.
In particular, $${\rm Pic}(B)\cong H^2(B,\ZZ)\cong\ZZ.$$
\end{prop}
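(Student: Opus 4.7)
The plan is to split the statement into five assertions---$\dim B = n$, the vanishings $H^{p,0}(B) = H^{0,p}(B) = 0$ for $p > 0$, simple connectedness, the equality $h^{1,1}(B) = 1$, and projectivity together with $\Pic(B) \cong \ZZ$---and treat them in this order. For $\dim B = n$ I would start from the Fujiki relation $\int_X \beta^{2n} = c_X q(\beta)^n$ on $H^2(X,\RR)$. Since $\dim B < 2n$, the class $\alpha^{2n}$ already vanishes in $H^{4n}(B) = 0$ for any $\alpha \in H^2(B,\RR)$, so $q(f^*\alpha) = 0$ and $f^*(H^2(B,\RR))$ is totally isotropic for the Beauville--Bogomolov--Fujiki form $q$. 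Plugging $\beta = f^*\omega_B + t\omega_X$ (with K\"ahler classes $\omega_B$ on $B$ and $\omega_X$ on $X$) turns Fujiki into a polynomial identity in $t$: the left-hand side vanishes in degrees below $t^{2n-\dim B}$ with strictly positive leading coefficient (a K\"ahler volume of a general fibre), while the right-hand side reads $c_X(2tq(f^*\omega_B,\omega_X) + t^2 q(\omega_X))^n$ with lowest power $t^n$, once one notes $q(f^*\omega_B,\omega_X) \neq 0$ (a nonzero isotropic $(1,1)$-class cannot be $q$-orthogonal to a K\"ahler class, by Hodge index on $H^{1,1}(X,\RR)$, whose signature is $(1,b_2(X)-3)$). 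Matching leading powers forces $\dim B = n$.

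For the Hodge vanishing I would use that $f^* \colon H^{p,0}(B) \hookrightarrow H^{p,0}(X)$ is injective (since $f_*\ko_X = \ko_B$, the bottom row of the Leray spectral sequence survives to $E_\infty$) and that by Beauville's decomposition $H^{\bullet,0}(X) = \CC[\sigma]/(\sigma^{n+1})$ is concentrated in even degrees. For $p$ odd there is nothing to prove. For $p = 2k > 0$, assume $\alpha \in H^{2k,0}(B)$ maps to $c\sigma^k$ with $c \neq 0$ and choose an integer $j$ with $n/(2k) < j \leq n/k$; such a $j$ exists as soon as $2k \leq n$, while for $2k > n$ the sheaf $\Omega^{2k}_B$ is automatically zero. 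Then $\alpha^j$ vanishes in $H^{2kj,0}(B)$ on degree grounds ($2kj > n = \dim B$), whereas $(f^*\alpha)^j = c^j\sigma^{kj}$ is nonzero ($kj \leq n$), a contradiction. The same argument with $\bar\sigma$ in place of $\sigma$ and $f^*\colon H^p(B,\ko_B) \hookrightarrow H^p(X,\ko_X)$ handles $H^{0,p}(B) = 0$.

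Simple connectedness is then automatic: $X$ is simply connected as a hyperk\"ahler manifold, and a proper surjective morphism with connected fibres induces a surjection on fundamental groups. For $h^{1,1}(B) = 1$ I see two complementary routes. Using Matsushita's higher direct image identification $R^i f_*\ko_X \cong \Omega^i_B$ (reviewed later in the paper), the Leray $E_2$-terms with $p+q = 2$ collapse to $E_2^{1,1} = H^{1,1}(B)$, the neighbours being $h^{2,0}(B)$ and $h^{0,2}(B)$ which vanish by the previous step; hence $h^{1,1}(B) = h^2(X,\ko_X) = 1$. More elementarily, the vanishings above give $H^2(B,\CC) = H^{1,1}(B)$, so $f^*(H^2(B,\RR)) \subset H^{1,1}(X,\RR)$ is totally isotropic for $q$; since $q$ on $H^{1,1}(X,\RR)$ has signature $(1,b_2(X)-3)$, this caps the dimension at one, and a K\"ahler class on $B$ supplies the matching lower bound. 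Tensoring with $\RR$ then forces $H^2(B,\QQ) \cong \QQ$.

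Finally, openness of the K\"ahler cone in $H^{1,1}(B,\RR) = \RR$ together with density of $H^2(B,\QQ)$ yields a rational K\"ahler class on $B$, so Kodaira's embedding theorem makes $B$ projective. The exponential sequence, using $H^1(B,\ko_B) = H^2(B,\ko_B) = 0$, then identifies $\Pic(B) \cong H^2(B,\ZZ)$; the torsion of $H^2(B,\ZZ)$ equals that of $H_1(B,\ZZ) = \pi_1(B)^{\mathrm{ab}} = 0$, giving $\Pic(B) \cong \ZZ$. The main obstacle is the bound $h^{1,1}(B) \leq 1$: it either consumes the full strength of Matsushita's structural result on $R^i f_*\ko_X$, or, in the more elementary approach, hinges on recognising $f^*(H^2(B,\RR))$ as totally isotropic and exploiting the Hodge-index signature of $q$ on $H^{1,1}(X,\RR)$. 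A subsidiary point I would absorb into the setup (following Varouchas) is that $B$ itself is K\"ahler, so that there is a K\"ahler class on $B$ to pull back and that the Hodge decomposition used throughout applies.
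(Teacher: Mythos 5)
Your proposal is correct and follows the same overall architecture as the paper's proof, but two of your sub-arguments take genuinely different routes, and both are worth comparing. For $\dim B=n$ the paper invokes the Bogomolov--Verbitsky theorem that a non-zero isotropic class $\beta\in H^2(X,\RR)$ satisfies $\beta^n\ne0$ and $\beta^{n+1}=0$; you instead extract the dimension directly from the Fujiki relation applied to $f^*\omega_B+t\,\omega_X$, using the Hodge index theorem on $H^{1,1}(X,\RR)$ to see $q(f^*\omega_B,\omega_X)\ne0$ and then matching the lowest non-vanishing powers of $t$ on the two sides. This is a valid and essentially self-contained alternative (modulo Fujiki), at the small cost of needing a K\"ahler class on $B$ already at this stage, i.e.\ Varouchas, which the paper only needs later. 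For $H^2(B,\QQ)\cong\QQ$, your ``elementary'' route (total isotropy of $f^*H^2(B,\RR)\subset H^{1,1}(X,\RR)$ against the signature $(1,h^{1,1}(X)-1)$ of $q$) is precisely the ``traditional proof'' the paper relegates to a footnote; the main-text argument instead uses the injectivity of $S^nH^2(X,\QQ)\subset H^{2n}(X,\QQ)$ together with the one-dimensionality of $f^*H^{2n}(B,\QQ)$. Your first proposed route via $R^if_*\ko_X\cong\Omega^i_B$ should be dropped here: that is Matsushita's much deeper Theorem \ref{thm:Matsushitabase} (stated only for projective $X$), and Remark \ref{rem:cohBase} explicitly emphasizes that $H^2(B,\QQ)\cong\QQ$, unlike the higher-degree statements, does not require it. On the positive side, you make explicit the choice of exponent $j$ with $n/(2k)<j\leq n/k$ that turns the paper's ``a similar argument can be made to work for all even $p$'' into an actual proof.

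One imprecision to repair: the edge map $H^p(B,f_*\ko_X)\to H^p(X,\ko_X)$ of the Leray spectral sequence is not automatically injective. The bottom row consists of permanent cocycles, but $E_\infty^{p,0}$ is a \emph{quotient} of $E_2^{p,0}$, since the differentials $d_r$ map $E_r^{p-r,r-1}$ \emph{into} the bottom row; injectivity would follow from degeneration (Koll\'ar, for $X$ projective), which you do not want to assume. The cheapest fix is to note that $B$ is compact K\"ahler by Varouchas, so $H^{0,p}(B)=\overline{H^{p,0}(B)}$ and the vanishing of $H^{0,p}(B)$ follows from that of $H^{p,0}(B)$ by Hodge symmetry, with no Leray input; alternatively, injectivity of $f^*$ on all of $H^*(B,\QQ)$ for smooth $B$ is the content of Remark \ref{rmk:smoothness}, which is what the paper uses.
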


\begin{proof} The smoothness of $B$ implies that the pull-back $f^\ast\colon H^*(B,\QQ)\to H^*(X,\QQ)$ is injective; see Remark \ref{rmk:smoothness}. Next, as $\alpha^{2n}=0$ for any class $\alpha\in H^2(B,\RR)$, we have $(f^\ast\alpha)^{2n}=0$
and, therefore, $q(f^\ast\alpha)=0$. By \cite{Bo96:BogomolovSH,Ver96:VerbitskySH}, this implies $(f^\ast\alpha)^{n+1}=0$
and hence $\alpha^{n+1}=0$, which implies $\dim(B)\leq n$. On the other hand, again by \cite{Bo96:BogomolovSH,Ver96:VerbitskySH}, $(f^\ast\alpha)^n\ne0$ for every nonzero class $\alpha\in H^2(B,\RR)$
from which we deduce $n\leq \dim(B)$.

 If $\alpha\in H^{p,0}(B)$, then $f^\ast\alpha$ is a non-trivial multiple of some power of $\sigma$. 
Hence, $\alpha=0$ if $p$ is odd. If $p=2$, then $f^\ast\alpha=\lambda\cdot\sigma$ and,
hence, $f^\ast \alpha^n=\lambda^n\cdot\sigma^n$. Since $\sigma^n\ne0$ and $H^{2n,0}(B)=0$, one finds $\lambda=0$. A similar argument can be made to work for all even $p$
and an alternative argument is provided by Theorem \ref{prop:CohBFibre}.

Next we show $H^2(B,\QQ)\cong \QQ$. Using \cite{Bo96:BogomolovSH,Ver96:VerbitskySH}, we have $$S^n f^\ast H^2(B,\QQ)\subset S^nH^2(X,\QQ)\subset H^{2n}(X,\QQ).$$On the other hand, the image
of $S^nf^\ast H^2(B,\QQ)$ is contained in $f^\ast H^{2n}(B,\QQ)$ which is just one-dimensional.\footnote{The traditional proof goes as follows:
First one shows that for any non-trivial class $\alpha\in H^2(B,\RR)=H^{1,1}(B,\RR)$ 
and any K\"ahler class $\omega$ on $X$ one 
has $\int_X(f^\ast\alpha)\wedge\omega^{2n-1}\ne0$. Indeed, otherwise the Hodge index theorem
would imply $q(f^\ast \alpha)<0$ and, therefore, $(f^\ast \alpha)^{n+1}\ne0$, which
contradicts $\dim (B)=n$. As a consequence, observe that for any two non-trivial
classes $\alpha_1,\alpha_2\in H^2(B,\RR)$ there exists a linear combination
$\alpha\coloneqq\lambda_1\alpha_1+\lambda_2\alpha_2$ with $\int_X(f^*\alpha)\wedge\omega^{2n-1}=0$, which then implies $\alpha=0$, i.e.\ any two classes
$\alpha_1,\alpha_2\in H^2(B,\RR)$ are linearly dependent.}

Since $X$ is K\"ahler, so is $B$, see \cite{Va84:Varouchas}.
  Using $H^{2,0}(B)=H^{0,2}(B)=0$, we can conclude that there exists a rational
K\"ahler class on $B$. Hence, $B$ is projective. According to \cite[Prop.\ 2.10.2]{Ko95:KollarShafarevich}, the natural map
$\pi_1(X)\to\pi_1(B)$ is surjective and,  therefore, $B$ is simply connected, as $X$ is.\footnote{By Lemma \ref{lem:Fano} below, $B$ is a Fano manifold, which provides an alternative argument of the simply connectedness of $B$.} Then, by the universal coefficient theorem,  $H^2(B,\ZZ)$ is torsion-free, i.e.\ $H^2(B,\ZZ)\cong\ZZ$. Since $H^{1,0}(B)=H^{2,0}(B)=0$, the exponential sequence
gives  ${\rm Pic}(B)\congpf H^2(B,\ZZ)$.
\end{proof}

\begin{remark}\label{rem:cohBase}
In fact, as we shall see, $H^{p,q}(B)=0$ for all $p\ne q$ and $H^{p,p}(B)\cong H^{p,p}(\PP^n)$, i.e.\ there is an 
isomorphism of rational Hodge structures $$H^*(B,\QQ)\cong H^*(\PP^n,\QQ).$$
There are two proofs of this fact, both eventually relying on the isomorphism
$H^\ast(X,\ko_X)\cong H^\ast(\PP^n,\CC)$. It seems that unlike
$H^2(B,\QQ)\cong\QQ$, which was proved above by exploiting the  structure
of the subring of $SH^2(X,\QQ)\subset H^\ast(X,\QQ)$, the proof of the identities $H^{k}(B, \QQ)\simeq H^{k}(\PP^n, \QQ)$ for $k>2$ uses deeper information about the hyperk\"ahler structure.
\smallskip

(i) The first proof for $B$ smooth and $X$ projective was given by Matsushita \cite{Mat05:Matsushitahigher}, as a consequence of the isomorphisms $R^if_\ast\ko_X\cong\Omega_B^i$, see Section \ref{sec:Matdirect}.
Combining this isomorphism with the splitting $Rf_*\ko_X\cong\bigoplus R^if_*\ko_X[-i]$, see \cite{Ko86:KollarHigherII}, one finds $$H^k(X,\ko_X)\cong H^k(B,Rf_\ast\ko_X)\cong\bigoplus H^{k-i}(B,R^if_\ast\ko_X)\cong \bigoplus H^{k-i}(B,\Omega^i_B),$$ which proves the claim.\footnote{By evoking results due to Saito \cite{Saito}, it should be possible to avoid the projectivity assumption in  \cite{Ko86:KollarHigherII}.}
\smallskip

(ii) Another one, which also works for singular $B$ and non-projective $X$, was given in \cite{ShYi18:ShenYin} and roughly relies on the fact that $H^\ast(B,\CC)$ can be deformed into $H^\ast(X,\ko_X)$, see Section \ref{sec:geomreal}.
\end{remark}

\begin{lem}[Markushevich, Matsushita]\label{lem:Fano}
Under the above assumptions, $B$ is a Fano variety, i.e.\ $\omega_B^\ast$ is ample.
\end{lem}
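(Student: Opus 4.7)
The plan is to combine the structural information on $B$ from Proposition \ref{prop:Matsub2} with a semi-positivity theorem for the direct image of the relative dualising sheaf.

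By Proposition \ref{prop:Matsub2}, $B$ is smooth projective with $\Pic(B) \simeq \ZZ$, generated by some ample class $H$. Write $K_B = \ell H$ for $\ell \in \ZZ$; the goal becomes $\ell < 0$. The starting observation is the identification $f_\ast \omega_{X/B} \simeq \omega_B^{-1}$: since $\omega_X \simeq \ko_X$, the relative dualising sheaf equals $f^\ast \omega_B^{-1}$, and the projection formula together with $f_\ast \ko_X = \ko_B$ (which holds because $f$ has connected fibres and $B$ is normal) yields the claim.

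The core step is to invoke a Fujita--Kawamata--Viehweg type semi-positivity theorem, which asserts that $f_\ast \omega_{X/B}$ is weakly positive; being a line bundle, it is therefore nef, and a fortiori pseudo-effective. Combined with $\Pic(B) \simeq \ZZ$, this forces $\omega_B^{-1} = -\ell H$ with $-\ell \geq 0$, so $\ell \leq 0$. To exclude the residual case $\ell = 0$, I would use that $H^{n,0}(B) = 0$ by Proposition \ref{prop:Matsub2}: were $\omega_B$ trivial, then $H^0(B, \omega_B) = H^{n,0}(B)$ would be non-zero, a contradiction. Hence $\ell \leq -1$, and $\omega_B^\ast = -\ell H$ is a positive multiple of the ample class $H$, hence ample, proving that $B$ is Fano.

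The main obstacle is justifying the semi-positivity of $f_\ast \omega_{X/B}$: this is a non-trivial theorem ultimately rooted in the Hodge theory of the variation of Hodge structure carried by the cohomology of the smooth fibres, which in our Lagrangian setting are abelian varieties by Lemma \ref{rem:Laggivestorus}. An alternative route, still Hodge-theoretic in nature, is to invoke Matsushita's later identification $R^i f_\ast \ko_X \simeq \Omega_B^i$ (Section \ref{sec:Matdirect}): the case $i = n$ gives $R^n f_\ast \ko_X \simeq \omega_B$, which via relative Serre duality reproduces $f_\ast \omega_{X/B} \simeq \omega_B^{-1}$ and provides the required positivity.
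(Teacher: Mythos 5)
Your argument is correct, but it takes a genuinely different route from the paper. The paper's proof is shorter and more top-down: since $X$ dominates $B$ and $\omega_X\cong\ko_X$, the known case of the Iitaka conjecture (Kawamata) gives ${\rm kod}(B)\leq 0$; with $\Pic(B)\cong\ZZ$ generated by an ample class this leaves only $\omega_B\cong\ko_B$ or $\omega_B^\ast$ ample, and the first case is killed by $H^{n,0}(B)=0$ exactly as you do. (The paper also records the alternative via the K\"ahler--Einstein metric on $X$, following \cite[Prop.\ 24.8]{GHJ03:CY_manifolds}.) You instead compute $f_\ast\omega_{X/B}\cong\omega_B^{-1}$ directly and quote Viehweg-type weak positivity of $f_\ast\omega_{X/B}$; this isolates the Hodge-theoretic engine that also drives Kawamata's theorem, so the two arguments are close cousins, with yours being more explicit about where the positivity comes from and the paper's being a one-line reduction to a black box. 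Three small caveats on your version: (a) weak positivity of a line bundle gives pseudo-effectivity, not nefness in general -- harmless here since on a base with $\Pic(B)\otimes\QQ\cong\QQ$ generated by an ample class the two notions coincide, and pseudo-effectivity is all you use; (b) the classical semipositivity theorems are stated for projective $X$, whereas $X$ is only assumed compact hyperk\"ahler -- but the paper's citation of Kawamata carries exactly the same caveat, so this is not a defect relative to the paper; (c) your closing aside that $R^nf_\ast\ko_X\cong\omega_B$ ``provides the required positivity'' does not actually do so on its own -- that isomorphism is a duality statement, not a positivity statement -- though it is only offered as an alternative remark and the main line of your proof does not depend on it.
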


\begin{proof}
Since $B$ is dominated by $X$, we have ${\rm kod}(B)\leq 0$ by the known case of the Iitaka conjecture; see \cite[Cor.\ 1.2]{Kawamata1985}. 
Hence, $\omega_B\cong\ko_B$ or $\omega_B^\ast$ is ample. 
However, the first case is excluded by $H^{n,0}(B)=0$. 

In \cite[Prop.\ 24.8]{GHJ03:CY_manifolds} the assertion is deduced from the fact that $X$ admits a K\"ahler--Einstein metric. The case $\omega_B\cong\ko_B$ is excluded, because
it would imply $H^{n,0}(B)\ne0$, which was excluded above.
\end{proof}

\begin{remark}
 It turns out that as soon as the base $B$ is smooth, then $B\cong\PP^n$.
 This result is due to Hwang \cite{Hwang:fibrationsbase} and its proof relies on the theory
of minimal rational tangents. The results by Matsushita and more recently
 by Shen and Yin, see Remark \ref{rem:cohBase} and Section \ref{sec:Hbasefiber},
 can be seen as strong evidence for the result.
 In dimension two, the result is immediate: Any smooth projective surface $B$ with $\omega_B^\ast$ ample and $H^2(B,\QQ)\cong\QQ$ is isomorphic to $\PP^2$.

It is tempting to try to find a more direct argument in higher dimension, but all attempts have failed so far. For example, according to Hirzebruch--Kodaira \cite{Hiko57:HirzebruchKodaira} it suffices to show that $H^*(B,\ZZ)\cong H^*(\PP^n,\ZZ)$ such that the first Chern class of a line bundle $L$ corresponding to a generator of $H^2(B,\ZZ)$ 
satisfies $h^0(B,L^k)=h^0(\PP^n,\ko(k))$, see \cite{Li16:LiProjectiveSpace} for a survey of further results in this direction. 

Alternatively, by Kobayashi--Ochai \cite{KoOc:KobayashiOchiai}, it is enough to show that $\omega_B$ is divisible by $n+1$, i.e.\ the Fano manifold $B$ has index $n+1$. As a first step, one could try to show that $f^\ast\omega_B$ is divisible by $n+1$.
\end{remark}

\subsection{Singularities of the base}\label{sec:sing}
It is generally expected that the base manifold $B$ is smooth, but at the moment this is only
known for $n\leq2$, see \ \cite{Ou19:OuLagrangianfibrations,BoKu18:BogomolovKurnosov,HuXu21:HuybrechtsXu}. The expectation is corroborated by the following computations of invariants of the singularities of $B$. 

Denote by $I\!H^*(B, \QQ)$ the intersection cohomology of the complex variety $B$ with middle perversity and rational coefficients. It is the hypercohomology of the intersection cohomology complex $\mathcal{IC}_B$, i.e.\ $I\!H^*(B, \QQ) = \mathbb{H}^*(B, \mathcal{IC}_B)$. In particular, if $B$ is smooth or has quotient singularities,
see \ \cite[Prop.\ 3]{GottscheSoergel993}, then $I\!H^*(B, \QQ) = H^*(B, \QQ)$.

\begin{prop}\label{prop:Bsing}
Assume $f\colon X \to B$ is a fibration over the complex variety $B$. 
\begin{enumerate}
    \item[(i)] $B$ is $\QQ$-factorial,\footnote{Are the singularities of $B$ actually factorial?} both in the Zariski and in the analytic topology.
    \item[(ii)] The intersection cohomology complex $\mathcal{IC}_B$ of $B$ is quasi-isomorphic to the constant sheaf $\QQ_B$. In particular, $I\!H^*(B, \QQ)=H^*(B, \QQ)$.
    \item[(iii)] \emph{(Matsushita)} $B$ has log terminal singularities.
\end{enumerate}
\end{prop}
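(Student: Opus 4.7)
The three parts share a common backbone: Matsushita's theorem, established earlier in the paper, that every fibre of $f$ has dimension exactly $n$. Combined with the smoothness (hence factoriality) of $X$, this presents $f$ as a proper surjective equidimensional morphism with connected fibres from a smooth factorial variety to a normal variety. My plan is to address (i), (ii), (iii) in turn, each relying on this equidimensionality in an essential way.

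For (i), given a Weil divisor $D\subset B$, the scheme-theoretic preimage $f^{-1}(D)$ is a Cartier divisor on $X$ by smoothness, and equidimensionality ensures $f^{-1}(D)$ has pure codimension one with each component dominating a component of $D$. The $\QQ$-Cartier property of $D$ then follows from the standard descent argument for $\QQ$-factoriality of the base of an equidimensional fibration from a factorial source: using $f_\ast\ko_X=\ko_B$ (connected fibres), one identifies $\ko_B(mD)$ for a suitable integer $m\geq 1$ as the descent of a power of the pullback line bundle $\ko_X(f^{-1}(D))$. The analytic version is handled by the same construction locally; the main subtlety is producing $m$ uniformly, which one achieves by tracking the generic multiplicity of $f$ along each irreducible component of $f^{-1}(D)$.

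For (ii), I would apply the BBD decomposition theorem to $f$:
\[Rf_\ast\QQ_X[2n]\cong\bigoplus_\alpha \mathcal{IC}_{Z_\alpha}(L_\alpha)[\ell_\alpha].\]
The key ingredient is that, for a Lagrangian fibration, every support $Z_\alpha$ equals $B$---an analogue of Ng\^o's support theorem for abelian fibrations, exploiting that the smooth fibres are abelian varieties of half the ambient dimension. The adjunction $\QQ_B\to Rf_\ast\QQ_X$ exhibits $\QQ_B$ as a direct summand, and to conclude I would show the natural morphism $\QQ_B\to\mathcal{IC}_B$ coming from the $t$-structure is a quasi-isomorphism, equivalently that $B$ is a rational homology manifold. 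The main obstacle is ruling out summands $\mathcal{IC}_B(L_\alpha)$ with non-trivial $L_\alpha$ in intermediate perverse degrees: this is addressed by pinning down the extremal summands via $f_\ast\QQ_X=\QQ_B$ and its Poincar\'e dual, and then propagating triviality inward using purity and semisimplicity of the decomposition.

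For (iii), I would follow Matsushita's strategy. Fix a log resolution $\pi\colon B'\to B$ with exceptional divisors $E_i$ and discrepancies $a_i$, so $\pi^\ast K_B=K_{B'}+\sum a_iE_i$, and resolve the main component of $X\times_B B'$ to obtain a smooth $X'$ with induced morphisms $\tilde\pi\colon X'\to X$ and $f'\colon X'\to B'$. Since $X$ is smooth (hence terminal) and $K_X=0$, one has $K_{X'}=\tilde\pi^\ast K_X+D'=D'$ with $D'\geq 0$ effective and supported on the exceptional locus of $\tilde\pi$. The Lagrangian structure forces $K_{X/B}=-f^\ast K_B$ to be effective (supported on non-smooth fibres); combining this with the analogous formula on $X'$ produces a numerical identity of $\QQ$-divisors from which the $a_i$ can be read off along the components of $f'^{-1}(E_i)$. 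Positivity of $D'$ then forces $a_i>-1$. The delicate part, and the main obstacle, is the multiplicity calculus on the resolved fibre product, where uniform equidimensionality of $f$ is what ensures the pulled-back exceptional divisors behave controllably.
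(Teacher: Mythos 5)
The device that drives the paper's proof of (i) and (ii) is absent from your proposal: a \emph{local finite multisection}. Near any $t\in B$ one chooses an $n$-dimensional analytic slice $S\subset X$ transverse to the fibre $f^{-1}(t)$; since $f$ is equidimensional, $f|_S\colon S\to U$ is \emph{finite} over a neighbourhood $U$ of $t$, and both (i) and (ii) reduce to statements about a finite surjection from a smooth source. Without this reduction your arguments have genuine gaps. For (i), a line bundle descends along a proper morphism with connected fibres only if its restriction to \emph{every} fibre is trivial, and $\ko_X(f^{-1}(D))$ restricted to a fibre $X_t$ with $t\in D$ is the "normal" bundle of $X_t$ inside the divisor $f^{-1}(D)$ containing it, which has no reason to be trivial (compare $\ko(E)|_E=\ko(-1)$ for an exceptional curve); producing such a descent is essentially equivalent to the $\QQ$-Cartier statement you are trying to prove. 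The working argument takes the norm of $\ko_S(m(f|_S)^*D)$ along the finite map, which is \cite[Lem.\ 5.16]{KollarMori}. For (ii), knowing that $\mathcal{IC}_B$ (or $\QQ_B$) occurs as a direct summand of $Rf_*\QQ_X$ with full supports does \emph{not} imply that $\QQ_B\to\mathcal{IC}_B$ is a quasi-isomorphism: for any resolution $g\colon Y\to B$ the complex $\mathcal{IC}_B$ is likewise a summand of $Rg_*\QQ_Y$, yet $B$ need not be a rational homology manifold (cone over an elliptic curve). What forces the quasi-isomorphism in the paper is that $\mathcal{IC}_U$ is a summand of $R(f|_{S^{\circ}})_*\QQ_{S^{\circ}}$ with $f|_{S^{\circ}}$ \emph{finite}, so all higher cohomology sheaves of $\mathcal{IC}_U$ vanish stalkwise; "propagating triviality inward using purity and semisimplicity" does not produce this vanishing, and the full-support theorem you invoke is both unavailable at this level of generality and insufficient for the conclusion.

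For (iii) your discrepancy computation on a resolved fibre product is in the right spirit, but the step "a numerical identity of $\QQ$-divisors from which the $a_i$ can be read off" is precisely the content of the canonical bundle formula: one must control the discriminant and moduli parts of $K_{B'}$, which is what \cite[Thm.\ 8.3.7.(4)]{Kollar07} and \cite[Thm.\ 2]{Nakayama1988} supply and what the paper simply cites to obtain a boundary $\Delta$ with $(B,\Delta)$ log terminal. Note also the logical order: computing discrepancies of $K_B$ itself presupposes that $K_B$ is $\QQ$-Cartier, i.e.\ part (i), and the paper's final step explicitly uses $\QQ$-factoriality to pass from "$(B,\Delta)$ log terminal" to "$B$ log terminal"; your sketch leaves both of these dependencies implicit.
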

\begin{proof} For (i) and (ii) one only needs that $f\colon X\to B$ is a connected and equidimensional morphism from a smooth variety $X$, while in the proof of (iii) one also
needs $\omega_X$ trivial.

For any $t \in B$, choose a chart $\varphi\colon U_x \subset X \to \CC^{2n}$, centered at $x$, and the analytic subset $S \coloneqq \varphi^{-1}(\Lambda)$, where $\Lambda \subseteq \CC^{2n}$ is an $n$-dimensional affine subspace intersecting the fiber $\varphi(f^{-1}(t))$ transversely. Since $f$ is equidimensional, 
the restriction $f|_S\colon S \to B$ is finite over an analytic neighbourhood $U$ of $t$. Therefore, $U$ is $\QQ$-factorial by \cite[Lem.\ 5.16]{KollarMori}. 

Denote $S^{\circ}\coloneqq S \cap f^{-1}(U)$. By the decomposition theorem \cite{BBD}.\footnote{Alternatively, note that the trace map $R(f|_{S^{\circ}})_* \QQ_{S^{\circ}} \to \mathcal{IC}_{U}$ splits the natural morphism $\mathcal{IC}_{U} \to R(f|_{S^{\circ}})_* \QQ_{S^{\circ}}$.} $\mathcal{IC}_{U}$ is a direct summand of $R(f|_{S^{\circ}})_* \QQ_{S^{\circ}}$. Taking stalks at $t$, we have
\[\mathcal{H}^0(\mathcal{IC}_B)_t \simeq \QQ_{B, t} \qquad \mathcal{H}^i(\mathcal{IC}_{U})_t \subseteq \mathcal{H}^i(R(f|_{S^{\circ}})_* \QQ_{S^{\circ}})_t=0,\]
because of the finiteness of $f|_{S^{\circ}}$.
Thus, the natural map $\QQ_B \to \mathcal{IC}_B$ is a quasi-isomorphism in the constructible derived category $D^b_c(B)$ with rational coefficients. 

By the canonical bundle formula, there exists a $\QQ$-divisor $\Delta \subset B$ such that the pair $(B, \Delta)$ is log terminal; see \cite[Thm.\ 8.3.7.(4)]{Kollar07} and \cite[Thm.\ 2]{Nakayama1988}. 
 By the $\QQ$-factoriality, $B$ has log terminal singularities too. 
\end{proof}
\begin{remark}[Quotient singularities]
The finiteness of the restriction $f|_S\colon S \to B$ over $b$ suggests that $B$ should have at worst quotient singularities. This would follow from the following conjecture.
\begin{conj}\label{Kconj}\emph{\cite[ \S 2.24]{Kollar2007:resolution}} 
Let $f\colon X \to Y$ be a finite and dominant morphism
from a smooth variety $X$ onto a normal variety $Y$. Then $Y$ has quotient singularities.
\end{conj}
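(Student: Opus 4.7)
The plan is to reduce the question to a local analytic statement at a chosen point $y\in Y$ and to factor $f$ through its Galois closure. Concretely, set $L = k(X)$, $K = k(Y)$, and let $M$ be the Galois closure of the finite separable extension $L/K$. With $G = \mathrm{Gal}(M/K)$ and $H \leq G$ the subgroup fixing $L$, the normalization $Z$ of $Y$ in $M$ is a normal variety carrying a $G$-action such that $Z/G = Y$, while $Z/H \to X$ is finite and birational, hence an isomorphism by normality of $X$. This produces a tower $Z \to Z/H = X \to Z/G = Y$ to which all further analysis would be applied.

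Granting this setup, it would suffice to prove that $Z$ is smooth (analytically) at each point $z$ lying over $y$. For then the germ $(Y,y)$ equals $(Z,z)/G_z$, where $G_z$ is the stabilizer of $z$, which is a finite group quotient of a smooth germ and therefore a quotient singularity by definition. In this way the whole conjecture collapses to a local smoothness statement about the normalization $Z$.

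The principal obstacle is precisely this smoothness claim: the normalization of a variety in a finite field extension is in general not smooth, even when the base variety is. To leverage the extra hypothesis that $X$ itself is smooth, one could try to use Zariski--Nagata purity of the branch locus for $Z \to Y$, noting that the ramification divisor has pure codimension one and that the ramification of $X\to Y$ factors through it. An induction on $\dim Y$ via generic hyperplane sections, controlling Galois closure and normalization under restriction, is a natural avenue. Alternatively, one could approach $Z$ through the local \'etale fundamental group of the smooth locus of $(Y, y)$: if this group were known to be finite, it would produce a canonical smooth finite \'etale cover to be extended across the singular locus. Either way, carrying this through in full generality lies at the heart of Koll\'ar's conjecture and is the step where I expect any direct attempt to stall; a complete proof seems to require a genuinely new idea, and I would accordingly state the proposition only as a conjecture and indicate which low-dimensional or additional-hypothesis cases follow from existing techniques.
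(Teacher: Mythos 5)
This statement is an open conjecture of Koll\'ar, and the paper offers no proof of it: the surrounding remark only records that it is known for surfaces by Brieskorn and open in higher dimension, the stated difficulty being that $f$ itself need not be a quotient map, not even locally. You have correctly recognized this, and your proposal is appropriately framed as a reduction rather than a proof. Your Galois-closure setup (normalizing $Y$ in the Galois closure $M$ of $k(X)/k(Y)$ to get $Z \to Z/H = X \to Z/G = Y$) is the standard first move, and your identification of the sticking point --- that one would need the normalization $Z$ to be smooth, or at least to admit some smooth finite cover extending across the singular locus, and that neither purity of the branch locus nor finiteness of the local \'etale fundamental group of the smooth locus is available in general --- is exactly where the paper locates the obstruction. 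One small caution: even granting your tower, smoothness of $Z$ at points over $y$ is genuinely stronger than what the conjecture asserts (a quotient singularity need not arise from the Galois closure of the given cover), so your reduction, while natural, may be reducing to a statement that is strictly harder or even false; any serious attempt would likely need to replace $Z$ by a different, possibly non-canonical, smooth local cover. Since the paper proves nothing here, there is nothing to compare beyond noting that your assessment of the status of the statement agrees with the authors'.
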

This is known for $n=2$ by \cite[Lem.\ 2.6]{Brieskorn67}, but it is open in higher dimension. One of the main issue is that $f$ itself need not be a quotient
map, not even locally.  
\end{remark}
\begin{cor}\label{cor:injpullback}
The pullback $f^*\colon H^*(B, \QQ) \to H^*(X, \QQ)$ is injective.
\end{cor}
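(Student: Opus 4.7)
The plan is to deduce injectivity from Proposition~\ref{prop:Bsing}(ii) together with the decomposition theorem, the same tool already invoked in the proof of part~(ii). The decomposition theorem of Beilinson--Bernstein--Deligne--Gabber, extended to proper morphisms of compact K\"ahler manifolds by de Cataldo--Migliorini, applied to the proper surjective morphism $f\colon X\to B$ from the smooth variety $X$, decomposes $Rf_*\QQ_X$ in the constructible derived category $D^b_c(B,\QQ)$ as a direct sum in which $\mathcal{IC}_B$ occurs as a direct summand. The key compatibility, coming from the fact that $f$ is surjective with connected fibers, is that the adjunction unit $\QQ_B \to Rf_*\QQ_X$ factors as the canonical morphism $\QQ_B \to \mathcal{IC}_B$ followed by the inclusion of this summand.

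Taking hypercohomology, the split inclusion $\mathcal{IC}_B \hookrightarrow Rf_*\QQ_X$ yields a split injection $I\!H^*(B,\QQ) \hookrightarrow H^*(X,\QQ)$. By Proposition~\ref{prop:Bsing}(ii) the canonical map $\QQ_B \to \mathcal{IC}_B$ is a quasi-isomorphism, so this identifies $H^*(B,\QQ) \simeq I\!H^*(B,\QQ)$ with a direct summand of $H^*(X,\QQ)$; by the compatibility recorded above, the resulting injection is precisely $f^*$, as desired.

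No real obstacle arises: the statement is a formal consequence of part~(ii) once the decomposition theorem is available in the K\"ahler setting (which is needed since $X$ is not assumed projective). The one point worth flagging is the compatibility between the adjunction unit and the splitting produced by the decomposition theorem. This is standard: on any smooth dense open $U \subset B$ over which $f$ is a submersion with connected fibers one has $R^0(f|_U)_*\QQ \simeq \QQ_U$, and $\mathcal{IC}_B$ is by definition the intermediate extension of this constant local system, which forces the adjunction to factor through the $\mathcal{IC}_B$ summand.
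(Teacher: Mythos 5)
Your proof is correct and follows essentially the same route as the paper: Proposition~\ref{prop:Bsing}(ii) identifies $H^*(B,\QQ)$ with $I\!H^*(B,\QQ)$, and the decomposition theorem exhibits the latter as a direct summand of $H^*(X,\QQ)$. The extra care you take in checking that the resulting split injection really is $f^*$ (via the factorization of the adjunction unit through $\mathcal{IC}_B$) is a detail the paper leaves implicit, but it is the same argument.
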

\begin{proof}
By Proposition \ref{prop:Bsing} this follows from the inclusion $I\!H^*(B, \QQ)\, \hookrightarrow H^*(X, \QQ)$ coming from the decomposition theorem. 
\end{proof}
\begin{remark}\label{rmk:smoothness}
Let $f\colon M \to N$ be a surjective holomorphic map between compact complex manifolds, with $M$ K\"{a}hler. By \cite[Lem.\ 7.28]{Voisin07I}, 
the pullback $f^*\colon H^*(N, \QQ) \to H^*(M, \QQ)$ is injective. However, this may fail if $N$ is singular, e.g.\ if $f$ is a normalization of a nodal cubic, even if $N$ has $\QQ$-factorial log terminal singularities, see for instance \cite[Thm.\ 5.11]{Mauri2021}.
\end{remark} 
\begin{remark}
Assume that $B$ is projective. By Corollary \ref{cor:injpullback}, the smoothness of $B$ can be dropped from the assumptions of Proposition \ref{prop:Matsub2} and Lemma \ref{lem:Fano}, see also \cite{Matsushita:Fibrations}.
\end{remark}

\subsection{The fibers of a fibration} Next we present Matsushita's result that any fibration 
of a compact hyperk\"ahler manifold is  a Lagrangian fibration.

\begin{lem}[Matsushita]
Assume $f\colon X\to B$ is a fibration. 
Then every smooth fiber $T\coloneqq X_t\subset X$ is a Lagrangian torus and in fact an abelian variety.
\end{lem}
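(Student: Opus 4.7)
The plan is to reduce the lemma to two sub-facts about a smooth fiber $T = X_t$: (a) $T$ is Lagrangian, i.e.\ $\sigma|_T = 0$, and (b) the normal bundle $\mathcal{N}_{T/X}$ is trivial. Granted (a) and (b), Lemma~\ref{rem:Laggivestorus} produces a complex torus structure on $T$, and the opening (Voisin) Proposition of this section upgrades it to an abelian variety. Sub-fact (b) is cheap: since $T$ is smooth, $df$ has full rank along $T$, so $t$ is a smooth point of $B$ and $\mathcal{N}_{T/X} \simeq f^{*}T_{t}B \simeq \mathcal{O}_{T}^{\oplus n}$, where I use that $\dim B = n$ (by Proposition~\ref{prop:Matsub2}, combined with Corollary~\ref{cor:injpullback} when $B$ is allowed to be singular).

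The heart of the proof is (a). My approach is a single integration on $X$ combined with a positivity argument on $T$. Fix an ample class $H \in H^{2}(B,\QQ)$ and a K\"ahler class $\omega \in H^{2}(X,\RR)$. Since $H^{n+1} = 0$ on $B$, one has $(f^{*}H)^{n+1} = 0$ on $X$, which forces $q(f^{*}H) = 0$ by Fujiki. Moreover $[T] = c \cdot (f^{*}H)^{n}$ in $H^{2n}(X,\QQ)$ for some $c > 0$, because $H^{n}$ represents a positive multiple of a point in the one-dimensional space $H^{2n}(B,\QQ)$.

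I then apply Fujiki's formula to expand
\[
\int_{X} \sigma \wedge \bar\sigma \wedge \omega^{\,n-2} \wedge (f^{*}H)^{n}
\]
as a sum over perfect matchings on the multiset $\{\sigma, \bar\sigma, \omega, \dots, \omega, f^{*}H, \dots, f^{*}H\}$ of products of BBF pairings. For Hodge-type reasons $q(\sigma) = q(\bar\sigma) = q(\sigma,\omega) = q(\bar\sigma,\omega) = q(\sigma, f^{*}H) = q(\bar\sigma, f^{*}H) = 0$, and also $q(f^{*}H) = 0$. Thus every matching with a non-zero contribution must pair $\sigma$ with $\bar\sigma$ and must pair each of the $n$ copies of $f^{*}H$ with an $\omega$; but only $n-2$ copies of $\omega$ are available, so every matching contains a $(f^{*}H, f^{*}H)$ pair and contributes zero. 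Using $[T] \propto (f^{*}H)^{n}$ and push-pull,
\[
\int_{T} \sigma|_{T} \wedge \overline{\sigma|_{T}} \wedge \omega|_{T}^{\,n-2} = 0.
\]
Now $\sigma|_{T}$ is a holomorphic $2$-form on the compact K\"ahler manifold $T$, hence automatically primitive (its image under $\Lambda$ would have bidegree $(1,-1)$), so the Hodge--Riemann bilinear relations make the left-hand side strictly positive unless $\sigma|_{T} = 0$. Thus $\sigma|_{T} = 0$, and since $\dim T = n = \tfrac{1}{2}\dim X$, isotropy upgrades to Lagrangianity.

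The main obstacle is combinatorial rather than conceptual: one needs to organize the Fujiki expansion carefully and verify that each of the standard BBF vanishings is available, and one must have $\dim B = n$ in the stated generality. Two degenerate cases deserve comment: for $n=1$ any $2$-form on the curve $T$ is automatically zero, while for $n=2$ the factor $\omega^{\,n-2}$ disappears but the Hodge--Riemann positivity of $\int_{T} \sigma|_{T} \wedge \overline{\sigma|_{T}}$ still closes the argument.
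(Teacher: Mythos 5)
Your proof is correct and follows essentially the same route as the paper: both establish $\int_X\sigma\wedge\bar\sigma\wedge\omega^{n-2}\wedge f^\ast(\alpha^n)=0$ from the Fujiki relation (you via its polarized, perfect-matching form; the paper by comparing the coefficient of $x^{n-2}y^n$ in $q(\sigma+\bar\sigma+x\omega+yf^\ast\alpha)^n=c_X\int_X(\cdots)^{2n}$), then use positivity of $\sigma|_T\wedge\overline{\sigma|_T}\wedge\omega|_T^{n-2}$ to get $\sigma|_T=0$ and conclude with Lemma \ref{rem:Laggivestorus}. Your explicit treatment of the triviality of $\mathcal{N}_{T/X}$ and of the low-dimensional cases merely spells out what the paper leaves implicit.
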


\begin{proof} Comparing the coefficients of $x^{n-2}y^n$ in the polynomial (in $x$ and $y$) the equation
$$q(\sigma+\bar\sigma+x\cdot\omega+y\cdot f^\ast\alpha)^n=c_X\cdot\int_X(\sigma+\bar\sigma+x\cdot \omega+y\cdot f^\ast\alpha)^{2n}$$ shows $\int_X(\sigma\wedge\bar\sigma)\wedge\omega^{n-2}\wedge f^\ast(\alpha^n)=0$ for all $\omega\in H^2(X,\RR)$ and all $\alpha\in H^2(B,\RR)$.
Since $[T]=f^\ast(\alpha^n)$ for some class $\alpha$, this implies
$\int_F(\sigma\wedge\bar\sigma)|_T\wedge \omega^{n-2}|_T=0$, which for a K\"ahler class $\omega$
and using that $\sigma\wedge\bar\sigma$ is semi-positive implies $\sigma|_T=0$.
Then conclude by Lemma \ref{rem:Laggivestorus}.
\end{proof}

\begin{lem}[Matsushita]\label{lem:equidimensional}
The symplectic form $\sigma\in H^{2,0}(X)$ is trivial when restricted to any subvariety $T\subset X$ contracted to a point $t$ under $f$.
In particular, all fibers of $f$ are of dimension $n$, i.e.\ $f$ is equidimensional, and if $B$ is smooth, $f$ is flat.
\end{lem}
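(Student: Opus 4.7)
My plan is to adapt the Fujiki pigeonhole argument from the proof of the preceding lemma. Fix a Kähler class $\omega$ on $X$ and an ample class $\alpha\in H^2(B,\QQ)$. Using the Hodge-type vanishings $q(\sigma,f^{\ast}\alpha)=q(\bar\sigma,f^{\ast}\alpha)=0$ together with $q(f^{\ast}\alpha)=0$ (derived from $(f^{\ast}\alpha)^{n+1}=0$, cf.\ the proof of Proposition~\ref{prop:Matsub2}), the form $q(\sigma+\bar\sigma+x\omega+yf^{\ast}\alpha)$ collapses to
\[2q(\sigma,\bar\sigma)+x^{2}q(\omega)+2xy\,q(\omega,f^{\ast}\alpha).\]
Comparing the coefficient of $x^{n-2}y^{n}$ on both sides of the Fujiki identity $q(\cdots)^{n}=c_X\int_X(\cdots)^{2n}$ yields
\[\int_X\sigma\wedge\bar\sigma\wedge\omega^{n-2}\wedge(f^{\ast}\alpha)^{n}=0.\]
Choosing $\alpha$ ample with $\alpha^{n}$ a positive multiple of the point class on $B$, the pullback $(f^{\ast}\alpha)^{n}$ is a positive multiple of the fiber class $[f^{-1}(t)]\in H^{2n}(X,\QQ)$. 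Writing $f^{-1}(t)=\sum_i m_iT_i$ over its $n$-dimensional components with multiplicities, the vanishing integral becomes $\sum_i m_i\int_{T_i}\sigma|_{T_i}\wedge\bar\sigma|_{T_i}\wedge\omega|_{T_i}^{n-2}=0$; semipositivity of $\sigma\wedge\bar\sigma$ together with positivity of $\omega|_{T_i}$ on $T_i^{\mathrm{reg}}$ makes each summand nonnegative, so each vanishes, giving $\sigma|_{T_i}=0$ for every top-dimensional fiber component $T_i$.

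For an arbitrary irreducible contracted subvariety $T$ of dimension $<n$, upper semicontinuity $\dim_x f^{-1}(f(x))\ge n$ forbids $T$ from being a separate lower-dimensional component of its fiber, so $T$ is contained in some $n$-dimensional component $T_i$ and $\sigma|_T=0$ follows by restriction. Once $\sigma|_T=0$ is established for every contracted $T$, the ``in particular'' assertion is immediate: at a smooth point $x\in T$ the tangent space $T_xT$ is $\sigma$-isotropic, and isotropic subspaces of the $2n$-dimensional symplectic vector space $T_xX$ have dimension at most $n$; combined with $\dim f^{-1}(t)\ge n$, every fiber is equidimensional of dimension $n$. Flatness over a smooth base $B$ then follows from miracle flatness applied to the equidimensional morphism $f\colon X\to B$ from smooth (hence Cohen--Macaulay) $X$ to regular $B$.

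The main obstacle is the potential existence of a fiber component $T$ of dimension $k>n$ (a fat component), which lies entirely in the critical locus of $f$: the Fujiki pigeonhole does not directly apply, since the class $[T]\in H^{2(2n-k)}(X,\QQ)$ need not be expressible as a polynomial in $H^2$-classes. Ruling out such fat components is the technical heart of the lemma and uses the hyperkähler structure essentially; one plausible route is to analyse the Lagrangian distribution $\ker(df)$ over the smooth locus of $f$, combined with a Hartogs-type/degeneration argument, to force $\sigma|_T=0$ and derive the contradiction $k\le n$ from the isotropy bound.
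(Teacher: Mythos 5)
Your reduction of the ``in particular'' claims to the vanishing $\sigma|_T=0$ is sound and agrees with the paper: isotropy bounds the dimension of a contracted subvariety by $n$, semicontinuity bounds every fiber component from below by $n$, and miracle flatness gives flatness over a smooth base. But the heart of the lemma is exactly the step you leave open, namely proving $\sigma|_T=0$ for \emph{every} contracted subvariety, including fiber components of dimension $>n$. Your Fujiki computation only reaches $n$-dimensional fiber components, and even there it presupposes that the cycle $\sum_i m_iT_i$ represents the class $(f^{\ast}\alpha)^n$ --- an identity that is standard for flat or equidimensional maps but is precisely what is in question: if the fiber over $t$ had a fat component, there is no reason for $f^{\ast}(\alpha^n)$ to be carried by the $n$-dimensional components in the way you write. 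Likewise, a contracted subvariety of dimension $<n$ sits inside \emph{some} fiber component of dimension $\geq n$, but that component could itself be fat, so the containment argument does not close the loop. Your final paragraph acknowledges all this and offers only a heuristic (``analyse $\ker(df)$ \ldots\ Hartogs-type argument''), so the proof is genuinely incomplete.

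The missing idea is the torsion-freeness theorem of Koll\'ar and Saito: $R^2f_\ast\omega_X$ is torsion free, hence so is $R^2f_\ast\ko_X$ because $\omega_X\cong\ko_X$. The class $\bar\sigma\in H^2(X,\ko_X)$ induces a section $\rho$ of $R^2f_\ast\ko_X$ that vanishes over the locus where the fibers are smooth Lagrangian tori (the preceding lemma), so $\rho$ is torsion and therefore zero. For any contracted $T$ with resolution $\widetilde T\to T$, the image of $\bar\sigma$ in $H^2(\widetilde T,\ko_{\widetilde T})$ factors through the fiber $R^2f_\ast\ko_X\otimes k(t)$, where $\rho$ vanishes, and by Hodge theory on $\widetilde T$ this forces $\sigma|_T=0$ with no dimension restriction whatsoever --- which is what kills the fat components in one stroke. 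Without this (or an equivalent) global input, the Fujiki pigeonhole alone cannot complete the argument.
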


\begin{proof}
A theorem due to Koll\'{a}r \cite[Thm.\ 2.1]{Ko86:KollarHigherI} and Saito \cite[Thm.\ 2.3, Rem.\ 2.9.]{Saito} 
says that $R^2f_*\omega_X$ is torsion free. Since in our case $\omega_X \simeq \mathcal{O}_X$, this shows that $R^2f_*\mathcal{O}_X$ is torsion free. Let $\bar{\sigma} \in H^2(X, \mathcal{O}_X)$ be the conjugate of the symplectic form, and $\rho$ be its image in $ H^0(B, R^2f_*\mathcal{O}_X)$. Since the general fiber is Lagrangian, $\rho$ must be torsion and hence zero. If $\widetilde{T} \to T$ is a resolution of $T$, then the image of $\bar{\sigma}$ in $H^2(\widetilde{T}, \mathcal{O}_{\widetilde{T}})$ is contained in the image of
\[
R^2f_*\mathcal{O}_X \otimes k(t) \to H^2(T, \mathcal{O}_T)\to H^2(\widetilde{T}, \mathcal{O}_{\widetilde{T}})
\]
and hence trivial. This implies that the image of $\sigma$ in $H^0(\widetilde{T}, \Omega^2_{\widetilde{T}})$ is trivial, i.e.\ $\sigma|_T=0$. By semi-continuity of the dimension of the fibers, $\dim T \geq n$, and so $T$ is Lagrangian.

The flatness follows from the smoothness of $X$ and $B$, see \cite[Exer.\ III.10.9]{Hartshorne}.
\end{proof}

\begin{remark}
Note that the conclusion that $f$ is flat really needs the base to be smooth.
In fact, by miracle flatness, $f$ is flat if and only if $B$ is smooth.
\end{remark}

\subsection{Further results}
We summarize a few further results without proof.
\subsubsection{Higher direct images}\label{sec:Matdirect}
The first one is the main result of \cite{Mat05:Matsushitahigher}.
\begin{thm}[Matsushita]\label{thm:Matsushitabase}
Assume $f\colon X\to B$ is a fibration of a projective\footnote{Again, the projectivity assumption can presumably be dropped by applying results of Saito.}
hyperk\"ahler manifold over a smooth base. Then
$$R^if_*\ko_X\cong \Omega_B^i.$$
\end{thm}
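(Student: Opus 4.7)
My plan is to prove the isomorphism first over the smooth locus $U\subseteq B$ of $f$ and then extend across the discriminant.

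\textbf{Smooth locus, reduction to $i=1$.} Let $U\subseteq B$ be the open subset over which $f$ is smooth, and write $f_U\colon X_U\to U$ for the restriction. By Lemma \ref{rem:Laggivestorus} the fibres of $f_U$ are abelian varieties. For a smooth proper family of abelian varieties the Hodge decomposition on each fibre globalises to a natural isomorphism
$$R^if_{U*}\ko_{X_U}\cong\wedge^i R^1f_{U*}\ko_{X_U},$$
so it suffices to identify $R^1f_{U*}\ko_{X_U}$ with $\Omega_U$.

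\textbf{Constructing the isomorphism via $\sigma$.} Over $U$ one has the relative cotangent sequence $0\to f^*\Omega_B\to\Omega_X\to\Omega_{X/B}\to 0$. The symplectic form gives an isomorphism $\sigma\colon \kt_X\congpf\Omega_X$, and for a vertical vector $v\in\kt_{X/B}$ the contraction $\iota_v\sigma$ annihilates $\kt_{X/B}$ by the Lagrangian condition; thus its image lies in the annihilator $f^*\Omega_B$. Nondegeneracy of $\sigma$ upgrades this to an isomorphism $\kt_{X/B}\congpf f^*\Omega_B$ on $f^{-1}(U)$, and dually $\Omega_{X/B}\cong f^*\kt_B$. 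Pushing forward gives $f_{U*}\Omega_{X/B}\cong\kt_U$. Moreover, the restriction of $\sigma$ equips every smooth fibre $X_t$ with a principal polarisation, so $X_t\cong\hat X_t$ and therefore $H^1(X_t,\ko_{X_t})\cong H^0(X_t,\Omega_{X_t})^\vee$. Relativising this fibrewise duality yields $R^1f_{U*}\ko_{X_U}\cong(f_{U*}\Omega_{X/B})^\vee\cong\Omega_U$, and combining with the previous step we obtain $R^if_{U*}\ko_{X_U}\cong\Omega_U^i$.

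\textbf{Globalising the map.} To extend this to all of $B$, I would set up a \emph{natural} morphism $\Omega_B^i\to R^if_*\ko_X$ globally, not just on $U$. The cleanest source is the exact sequence $0\to f^*\Omega_B\to\Omega_X\to\Omega_{X/B}\to 0$: its connecting homomorphisms on $Rf_*$, together with the isomorphism $\kt_{X/B}\cong f^*\Omega_B$ induced by $\sigma$ (which lives on all of $X$, not just over $U$, as a sheaf morphism), provide the candidate arrow. Alternatively, one uses the edge map in the Hodge-to-de Rham spectral sequence for $Rf_*\Omega_{X/B}^\bullet$, whose degeneration follows from Kollár's splitting $Rf_*\ko_X\cong\bigoplus R^if_*\ko_X[-i]$ already invoked in Remark \ref{rem:cohBase}.

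\textbf{Extension across the discriminant.} By Kollár--Saito $R^if_*\ko_X$ is torsion-free (Lemma \ref{lem:equidimensional} used this already). The main obstacle, and the only place where projectivity is genuinely used, is to upgrade this to local freeness of rank $\binom{n}{i}$: Matsushita achieves this by analysing the flat, equidimensional structure of $f$ and applying Koll\'ar's semipositivity/Hodge-theoretic techniques that require a projective ambient space. Once both $R^if_*\ko_X$ and $\Omega_B^i$ are known to be locally free of the same rank on the smooth $B$, the map constructed in the previous paragraph is an isomorphism on $U$ and a morphism of locally free sheaves globally, hence an isomorphism everywhere. The projectivity assumption is exactly what is expected to drop once one invokes Saito's mixed Hodge module machinery, as noted in the footnote to the theorem.
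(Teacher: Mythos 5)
Your overall architecture matches the sketch the paper gives (the paper itself only outlines the argument): identify $f_*\Omega^1_{X/B}$ with $\kt_B$ over the smooth locus using the Lagrangian condition, dualize to reach $R^1f_*\ko_X\cong\Omega_B$, take exterior powers for $i>1$, and extend across the discriminant using Koll\'ar's torsion-freeness of $R^if_*\omega_X\cong R^if_*\ko_X$. But one step in your smooth-locus argument is genuinely wrong: you assert that ``the restriction of $\sigma$ equips every smooth fibre $X_t$ with a principal polarisation.'' The restriction $\sigma|_{X_t}$ is \emph{zero} --- that is precisely the Lagrangian condition you used two sentences earlier --- so it cannot polarize anything, and in general the fibres are not principally polarized. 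The duality $R^1f^{\circ}_*\ko_{X^{\circ}}\cong\bigl(f^{\circ}_*\Omega^1_{X^{\circ}/B^{\circ}}\bigr)^\vee$ is instead obtained from a \emph{relative polarization}, i.e.\ the restriction of an ample (or rational K\"ahler) class on $X$ to the fibres; this is one of the places where projectivity actually enters, and any polarization (not necessarily principal) suffices since an isogeny $X_t\to\hat X_t$ still identifies $H^1(X_t,\ko_{X_t})$ with $H^0(X_t,\Omega_{X_t})^\vee$ over $\CC$.

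A second, smaller gap is in your final step: a morphism of locally free sheaves of the same rank on $B$ that is an isomorphism over the open set $U$ is \emph{not} automatically an isomorphism everywhere --- its determinant could vanish along the discriminant divisor (compare $\ko\xrightarrow{\,x\,}\ko$ on $\mathbb{A}^1$). Since $B\setminus U$ has codimension one, you cannot conclude by reflexivity alone either; Matsushita's extension argument requires a genuine analysis at the generic points of the discriminant (this is where the flatness and equidimensionality of $f$, together with the torsion-freeness of $R^if_*\ko_X$, are really used), not just a rank count.
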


On the open subset $B^{\circ} \subset B$ over which $f^{\circ}\coloneqq f|_{f^{-1}(B^{\circ})} \colon X^{\circ} \to B^{\circ}$ is smooth, the result can be obtained by dualising the isomorphism
\[
 f^{\circ}_*\Omega^1_{X^{\circ}/B^{\circ}} \simeq T_{B^{\circ}},
\]
which holds because the smooth fibers of $f$ are Lagrangian. A relative polarization is used to show that $R^1f^{\circ}_* \mathcal{O}_{X^{\circ}}$ and $f^{\circ}_*\Omega^1_{X^{\circ}/B^{\circ}}$ are dual to each other. To extend the result from $B^{\circ}$ to the whole $B$, Theorem \ref{thm:Matsushitabase} uses a result of Koll\'ar \cite[Thm.\ 2.1]{Ko86:KollarHigherI} saying that $R^if_\ast\omega_X$ are torsion free, which for $X$ hyperk\"ahler translates into $R^if_\ast\ko_X$ being torsion free.


As mentioned in Remark \ref{rem:cohBase}, the theorem implies $H^\ast(B,\QQ)\cong H^\ast(\PP^n,\QQ)$.
 
\subsubsection{Lagrangian tori are Lagrangian fibers}\label{sec:More}
In \cite{Be11:BeauvilleProblemlist} Beauville asked whether every Lagrangian torus $T\subset X$ is the fiber of a Lagrangian fibration $X\to B$. The question has been
answered affirmatively:  
\smallskip

(i) Greb--Lehn--Rollenske in \cite{GLR:Lagrangian1} first dealt with the case of non-projective $X$ and later  showed in \cite{GLR:Lagrangian2} the existence of an almost\footnote{A meromorphic map $f \colon X \dashrightarrow B$ is almost holomorphic if there exists a Zariski-open subset $U \subset B$ such that $f|_{f^{-1}(U)}\colon f^{-1}(U) \to U$ is holomorphic and proper.} holomorphic Lagrangian fibration in dimension four.
\smallskip

(ii) A different approach to the existence of an almost holomorphic Lagrangian fibration with $T$ as a fiber was provided by Amerik--Campana \cite{AmCa13:AmerikCampanaOnfamilies}. The four-dimensional case had been discussed
before by Amerik \cite{Am12:AmerikBeauvillequestion}.
\smallskip

(iii) Hwang--Weiss  \cite{HW:Lagrangian} deal with the projective case
and proved the existence of an almost  Lagrangian fibration with fiber $T$. Combined with techniques of \cite{GLR:Lagrangian1} 
this resulted in a complete answer.

\section{Cohomology of the base and cohomology of the fiber}\label{sec:Hbasefiber}
The aim of this section is to prove the following result.

\begin{thm}\label{prop:CohBFibre}
Assume $X\to B$ is a fibration 
and let $X_t$ be a smooth fiber.
Then
$$ H^\ast(\PP^n,\QQ)\cong H^\ast(B,\QQ)\text{ and } H^\ast(\PP^n,\QQ)\cong\Ima\left(H^\ast(X,\QQ)\to H^\ast(X_t,\QQ)\right).$$
\end{thm}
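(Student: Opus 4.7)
\emph{Plan.} I would prove the two isomorphisms in turn, both times building on the Verbitsky-type identification $H^\ast(X, \ko_X) \cong H^\ast(\PP^n, \CC)$ recalled as item (i) of the introduction, as suggested in Remark \ref{rem:cohBase}.

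For the first isomorphism $H^\ast(B, \QQ) \cong H^\ast(\PP^n, \QQ)$, I would assume first $B$ smooth and $X$ projective, and combine Matsushita's Theorem \ref{thm:Matsushitabase}, namely $R^if_\ast\ko_X \cong \Omega_B^i$, with Koll\'ar's splitting $Rf_\ast\ko_X \cong \bigoplus R^if_\ast\ko_X[-i]$ to get
$$H^k(X, \ko_X) \cong \bigoplus_{i+j=k} H^j(B, \Omega_B^i) = \bigoplus_{i+j=k} H^{i,j}(B).$$
Matching dimensions via Verbitsky yields $\sum_{i+j=k} h^{i,j}(B) = 1$ for $0 \leq k \leq 2n$ even and $0$ otherwise. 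Combined with Hodge symmetry $h^{i,j}(B) = h^{j,i}(B)$, the vanishing $h^{p,0}(B) = 0$ for $p > 0$ from Proposition \ref{prop:Matsub2}, and the lower bound $h^{p,p}(B) \geq 1$ from powers of a K\"ahler class, elementary arithmetic forces $h^{i,j}(B) = \delta_{ij}$ for $0 \leq i, j \leq n$. The case of singular $B$ or non-projective $X$ would be handled via the Shen--Yin deformation argument of Section \ref{sec:geomreal}.

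Turning to the image in the fiber, the inclusion $H^\ast(\PP^n, \QQ) \hookrightarrow \Ima$ is immediate: a K\"ahler class $\omega$ on $X$ restricts to a K\"ahler class $\omega|_{X_t}$ on the smooth Lagrangian abelian variety $X_t$, so $1, \omega|_{X_t}, \dots, \omega|_{X_t}^n$ are linearly independent with $\omega|_{X_t}^{n+1} = 0$, spanning a copy of $H^\ast(\PP^n, \QQ)$ inside $\Ima$. For the reverse inclusion I would apply Deligne's global invariant cycle theorem on the smooth locus $f^\circ \colon X^\circ \to B^\circ$ of $f$, identifying $\Ima$ with the monodromy invariants $H^\ast(X_t, \QQ)^{\pi_1(B^\circ, t)}$.

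The main obstacle is to show that these monodromy invariants exhaust nothing beyond the span already exhibited, i.e.\ that $H^0(B^\circ, R^k f^\circ_\ast \QQ_{X^\circ})$ is one-dimensional for $k = 2p$ with $0 \leq p \leq n$ and vanishes otherwise. The $(0,k)$-Hodge piece is controlled by Matsushita's $R^k f_\ast \ko_X \cong \Omega_B^k$ together with $H^0(B, \Omega_B^k) = H^{k,0}(B) = 0$ for $k > 0$ from Proposition \ref{prop:Matsub2}. The remaining Hodge pieces would require either complementary pushforward statements for $R^if_\ast\Omega_X^j$, or a direct monodromy analysis exploiting the local structure $X_t \cong T^\ast_t B / \Lambda_t$ of a Lagrangian torus fibration combined with the simple-connectedness of $B$; alternatively, the Shen--Yin approach of Section \ref{sec:geomreal} would handle both halves of the theorem uniformly.
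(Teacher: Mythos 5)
Your argument for the first isomorphism is correct but only under the extra hypotheses that $B$ is smooth and $X$ is projective: it is precisely Matsushita's original proof recalled in Remark \ref{rem:cohBase}(i), combining Theorem \ref{thm:Matsushitabase} with Koll\'ar's splitting and a dimension count. The theorem, however, is stated (deliberately) without either hypothesis, and for the general case you simply defer to ``the Shen--Yin deformation argument of Section \ref{sec:geomreal}'' --- which is the paper's actual proof, so nothing independent is supplied there. The real gap is in the second isomorphism. After invoking Deligne's global invariant cycle theorem to identify $\Ima\left(H^\ast(X,\QQ)\to H^\ast(X_t,\QQ)\right)$ with the monodromy invariants, you control only the $(0,k)$-Hodge piece via $R^kf_\ast\ko_X\cong\Omega_B^k$ and $H^{k,0}(B)=0$; knowing that the extreme Hodge pieces of the invariant sub-Hodge structure vanish says nothing about the intermediate pieces, and the ``complementary pushforward statements for $R^if_\ast\Omega_X^j$'' or ``direct monodromy analysis'' you gesture at are exactly the hard part, left unproved. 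As you state it, the argument does not close.

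The idea you are missing is the one algebraic input that makes both halves work at once and removes the smoothness/projectivity hypotheses: for any nonzero isotropic class $\beta\in H^2(X,\CC)$, cup product with $\beta$ makes $H^\ast(X,\CC)$ a graded $\CC[x]/(x^{n+1})$-algebra, and this structure is independent of $\beta$ up to isomorphism, because any two such classes lie in one orbit of ${\rm SO}(H^2,q)$, which acts on the full cohomology by the LLV/Verbitsky construction (Proposition \ref{prop:C[x]/x^n+1}). Consequently the dimensions of $P_0H^d$ and of $\bar P_0H^d=H^d/\Ker(\beta^n)$ do not depend on $\beta$ (Corollary \ref{cor:Pindependentalpha}); computing them for $\beta=\sigma$, where $\Ker(\sigma^n)=\bigoplus_{p>0}H^{p,q}(X)$, gives $\bar P_0H^d\cong H^d(X,\ko_X)\cong H^d(\PP^n,\CC)$ and $P_0H^d\cong H^0(X,\Omega^d_X)$. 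Taking instead $\beta=f^\ast\alpha$, the inclusion $f^\ast H^\ast(B,\CC)\subset P_0H^\ast$ (via the Lefschetz decomposition on $I\!H^\ast(B)$) handles the base, and Voisin's Lemma \ref{lem:Voisinrest}, which says $\Ker(\beta^n)$ restricts to zero on $X_t$, gives a surjection $\bar P_0H^\ast\twoheadrightarrow\Ima\left(H^\ast(X,\CC)\to H^\ast(X_t,\CC)\right)$; since the target already contains the copy of $H^\ast(\PP^n)$ generated by a restricted K\"ahler class, dimension count forces equality. This replaces the unbounded monodromy computation in your sketch by a single transport-of-structure argument, which is the content you would need to add.
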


The first isomorphism for $X$ projective and $B$ smooth is originally due to Matsushita \cite{Mat05:Matsushitahigher}, see Remark \ref{rem:cohBase}.
The proof we give here is a version of the one by Shen and Yin \cite{ShYi18:ShenYin} that works without assuming  $X$ projective.  Note also that we do not assume that the base $B$ is smooth. 

The second isomorphism in degree two is essentially due to Oguiso \cite{Og09:Oguisoabelian}, relying on results of Voisin \cite{Voi92:VoisinStabilityLagr}.
The paper by Shen and Yin \cite{ShYi18:ShenYin} contains two proofs of the general result,
one using the ${\mathfrak s}{\mathfrak l}_2$-representation theory of the perverse filtration and another one, due to Voisin,
relying on classical Hodge theory.

The proof we shall give avoids the perverse filtration as well as the various ${\mathfrak s}{\mathfrak l}_2\times{\mathfrak s}{\mathfrak l}_2$-actions central for the arguments in \cite{ShYi18:ShenYin}.
The discussion below also proves the second result in \cite[Thm.\ 0.2]{ShYi18:ShenYin}, namely the equality
$$ {^{\mathfrak{p}}}h^{i,j}(X)=h^{i,j}(X)$$
between the classical and perverse Hodge numbers, see Section \ref{sec:perversefiltra}.
How it fits into the setting of  P$=$W  is explained in Section \ref{sec:PW}.

\subsection{Algebraic preparations}\label{sec:algpreps}
To stress the purely algebraic nature of what follows we shall use the shorthand $H^\ast\coloneqq H^\ast(X,\CC)$ and consider it as a graded $\CC$-algebra. 

Consider a non-trivial, isotropic  element $\beta$ of degree two, i.e.\ $0\ne\beta\in H^2$ with $q(\beta)=0$. Then, according
to Verbitsky and Bogomolov \cite{Bo96:BogomolovSH,Ver96:VerbitskySH}, one has
$$\beta^n\ne0\text{  and }\beta^{n+1}=0.$$ In particular, multiplication by
$\beta$ defines on $H^\ast$ the structure of a graded  $\CC[x]/(x^{n+1})$-algebra
with  $x$ of degree two.

All that is needed in the geometric applications is then put into the following statement.

\begin{prop}\label{prop:C[x]/x^n+1} For every two non-zero, isotropic elements $\beta,\beta'\in H^2$, the induced graded $\CC[x]/(x^{n+1})$-algebra structures
on $H^\ast$ are isomorphic.
\end{prop}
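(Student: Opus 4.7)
The natural approach is via the Looijenga--Lunts--Verbitsky (LLV) Lie algebra $\mathfrak g(X)\subset\operatorname{End}(H^\ast)$, which is isomorphic to $\mathfrak{so}(\widetilde H,\widetilde q)$ for $\widetilde H=H^2\oplus U$ with $U$ a hyperbolic plane, and whose grading-preserving part contains a canonical copy of $\mathfrak{so}(H^2,q)$. Exponentiating, one obtains a homomorphism $\rho\colon SO(H^2,q,\mathbb C)\to GL(H^\ast)$ whose image preserves the grading and intertwines the Lefschetz operators: $\rho(g)\,L_\alpha\,\rho(g)^{-1}=L_{g\alpha}$ for all $\alpha\in H^2$ and $g\in SO(H^2,q)$.

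The proof then reduces to the transitivity of $SO(H^2,q,\mathbb C)$ on the isotropic quadric. Since $\dim_{\mathbb C}H^2\geq 3$ for any compact hyperk\"ahler manifold, the locus $Q=\{\alpha\in H^2 : q(\alpha)=0,\ \alpha\neq 0\}$ is a smooth connected affine quadric, on which $SO(H^2,q,\mathbb C)$ acts transitively. Picking $g$ with $g\beta=\beta'$ and setting $\phi\coloneqq\rho(g)$ yields a graded linear automorphism of $H^\ast$ intertwining multiplication by $\beta$ with multiplication by $\beta'$, hence an isomorphism of the two $\mathbb C[x]/(x^{n+1})$-module structures.

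To upgrade this to an isomorphism of $\mathbb C[x]/(x^{n+1})$-\emph{algebras} one invokes the additional fact, due to Verbitsky, that $\rho$ takes values in the group of graded ring automorphisms of $H^\ast$, not merely its group of graded linear automorphisms. On the Verbitsky subring $SH^\ast(X)\subset H^\ast$ generated by $H^2$ this is essentially tautological: $SH^\ast(X)$ admits a presentation $\operatorname{Sym}^\ast H^2/I$ with $SO(H^2,q)$-invariant ideal $I$ (generated by the relations $\alpha^{n+1}=0$ for isotropic $\alpha$, or equivalently by the Fujiki relations), so the standard $SO$-action on the symmetric algebra descends and $\phi|_{SH^\ast(X)}$ is manifestly a ring automorphism sending $\beta$ to $\beta'$.

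The main obstacle is extending the ring-automorphism statement from $SH^\ast(X)$ to the full cohomology $H^\ast$ in cases where $SH^\ast(X)\subsetneq H^\ast(X)$: one needs that the cup product remains $\mathfrak g(X)$-equivariant across the extra LLV-isotypic components, which is a structural feature of the hyperk\"ahler cohomology ring rather than a formal consequence of the module-theoretic construction. If instead one is content with a module isomorphism---which already controls the Jordan type of multiplication by $\beta$ and is what drives the subsequent arguments about the image of restriction to a smooth fiber and about perverse Hodge numbers---then only the conjugation identity above is required and the proof becomes entirely formal.
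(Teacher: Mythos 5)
Your proof is correct and follows essentially the same route as the paper: transitivity of $SO(H^2,q,\CC)$ on the set of non-zero isotropic vectors, combined with the fact that this group acts on $H^\ast$ by graded \emph{ring} automorphisms. The ``main obstacle'' you flag --- upgrading from ring automorphisms of the Verbitsky subring to ring automorphisms of all of $H^\ast$ --- is exactly the input the paper outsources to \cite[Prop.\ 3.4]{SoldatenkovHodgeHK} (the paper phrases it as $SO(H^2,q)$ being contained in the image of $\mathrm{Aut}(H^\ast)\to \mathrm{Gl}(H^2)$), so it is a known result rather than a gap; your observation that a mere module isomorphism already suffices for the subsequent corollaries is also accurate.
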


\begin{proof} 

Consider the complex algebraic group of
automorphisms ${\rm Aut}(H^\ast)$ of the graded $\CC$-algebra $H^\ast$ and
its image $G$ under ${\rm Aut}(H^\ast)\to {\rm Gl}(H^2)$. Clearly, the assertion
holds if $\beta,\beta'\in H^2$ are contained in the same $G$-orbit.
As any two non-zero isotropic classes $\beta,\beta'$ are contained in the
same orbit of the complex special orthogonal group ${\rm SO}(H^2,q)$, it suffices to show
that ${\rm SO}(H^2,q)\subset G$. This follows from \cite[Prop.\ 3.4]{SoldatenkovHodgeHK}, up to taking complex coefficients in loc.\ cit. 

\end{proof}

\begin{remark}
The arguments can be adapted to prove the following statement: Assume $\beta,\beta'\in H^2$ satisfy $q(\beta)=q(\beta')\ne0$. Then the induced graded $\CC[x]/(x^{2n+1})$-algebra structures on $H^\ast$, given by letting $x$ act by multiplication with $\beta$ resp.\ $\beta'$, are isomorphic.
\end{remark}

For $0\ne\beta\in H^2$ with $q(\beta)=0$ and $d\leq n$ we let
$$H^d_{\beta\text{-pr}}\coloneqq{\rm Ker}\left(\!\xymatrix{{\beta^{n-d+1}}\colon H^d\ar[r]&H^{2n-d+2}}\!\right),$$ which is called the space of $\beta$-primitive forms.
Note, however, that $\beta$ does not satisfy the Hard Lefschetz theorem; otherwise we would have defined primitive classes in $H^d$ as elements in the kernel of $\beta^{2n-d+1}$. 

We will also need the two spaces
\begin{equation}\label{eq:P0Q*}
P_0H^d\coloneqq \Ima\left(\!\xymatrix{\bigoplus_{d-2i\leq n}\beta^i \cdot H^{d-2i}_{\beta\text{-pr}}\ar[r]& H^d}\!\right)\text{ and }\bar P_{0}H^d\coloneqq H^d/\, {\rm Ker}\left(\!\xymatrix{\beta^n\colon H^d\ar[r]&H^{d+2n}}\!\right).
\end{equation}
It turns out that the map in the definition of $P_0$ is injective, but this is not needed for the argument. Note that $P_0H^d\subset {\rm Ker}(\beta^n)\subset H^d$ for all $d>0$.

\begin{cor}\label{cor:Pindependentalpha}
The dimensions of the spaces $P_0H^d$ and $\bar P_0H^d$ are independent of the choice of the non-trivial, isotropic class $\beta\in H^2$.\qed
\end{cor}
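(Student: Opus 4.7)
The plan is to deduce the corollary directly from Proposition \ref{prop:C[x]/x^n+1}. The key observation is that both $P_0H^d$ and $\bar P_0 H^d$ are constructed purely from the graded $\CC[x]/(x^{n+1})$-module structure on $H^\ast$ attached to $\beta$: no information about $\beta$ enters their definitions beyond the action of multiplication by $\beta$ on the graded algebra.

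Given two non-zero isotropic classes $\beta, \beta' \in H^2$, I would first apply Proposition \ref{prop:C[x]/x^n+1} to produce a graded algebra automorphism $\varphi \colon H^\ast \to H^\ast$ with $\varphi(\beta) = \beta'$. This automatically intertwines the two $\CC[x]/(x^{n+1})$-actions, so that $\varphi(\beta^k \cdot h) = \beta'^k \cdot \varphi(h)$ for all $h \in H^\ast$ and all $k \geq 0$. In particular, the primitive subspace $H^d_{\beta\text{-pr}} = \Ker(\beta^{n-d+1} \colon H^d \to H^{2n-d+2})$ is mapped isomorphically by $\varphi$ onto $H^d_{\beta'\text{-pr}}$, because $\varphi$ is a graded linear bijection carrying the kernel of multiplication by $\beta^{n-d+1}$ to the kernel of multiplication by $\beta'^{n-d+1}$.

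Consequently, $\varphi$ restricts to an isomorphism between $P_0H^d = \Ima\bigl(\bigoplus_{d-2i\leq n} \beta^i \cdot H^{d-2i}_{\beta\text{-pr}} \to H^d\bigr)$ and its $\beta'$-analogue, and identifies $\Ker(\beta^n \colon H^d \to H^{d+2n})$ with $\Ker(\beta'^n \colon H^d \to H^{d+2n})$, so passing to quotients yields an isomorphism between the two versions of $\bar P_0 H^d$. In particular, the dimensions coincide. The real work sits upstream in Proposition \ref{prop:C[x]/x^n+1}; once the inclusion ${\rm SO}(H^2, q) \subset G$ (via Soldatenkov) is in hand, together with transitivity of ${\rm SO}(H^2, q)$ on non-zero isotropic vectors, the present corollary is a formal consequence of the definitions and does not even require knowing that the summation map defining $P_0H^d$ is injective.
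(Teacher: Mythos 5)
Your argument is correct and is exactly the route the paper intends: the corollary carries an immediate \qed because, as you observe, $P_0H^d$ and $\bar P_0H^d$ are defined purely in terms of the graded $\CC[x]/(x^{n+1})$-algebra structure, so Proposition \ref{prop:C[x]/x^n+1} transports one to the other via a graded automorphism intertwining the two module actions. You have simply written out the routine verification that the paper leaves implicit.
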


\subsection{Geometric realizations}\label{sec:geomreal}
Let us begin by looking at the obvious choice for $\beta$ provided by the symplectic form $\sigma\in H^0(X,\Omega_X^2)\subset H^2(X,\CC)$.

\begin{lem} For $\beta=\sigma$ one has
$$P_0H^d=H^0(X,\Omega_X^d)\subset H^d(X,\CC)\text{ and } P_0H^\ast\cong H^*(\PP^n,\CC)$$
and
$$\bar P_0 H^d\cong H^d(X,\ko_X)\text{ and } \bar P_{0} H^\ast\cong H^*(\PP^n,\CC).$$
\end{lem}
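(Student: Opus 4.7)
The plan is to handle the $\bar P_0$ and $P_0$ identifications separately. The $\bar P_0$ part follows from a direct bidegree count anchored in $\sigma^n\in H^{2n,0}$; the $P_0$ part reduces to the more subtle claim that every $\sigma$-primitive class lies on the top row $H^{m,0}$ of the Hodge diamond.

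For $\bar P_0H^d$, multiplication by $\sigma^n$ maps $H^{p,q}\to H^{p+2n,q}$, which is automatically zero once $p\geq 1$. On the remaining component $H^{0,q}=\CC\cdot\bar\sigma^{q/2}$ (nonzero only for $q$ even) injectivity of $\sigma^n$ follows from $\int_X\sigma^n\bar\sigma^n\neq 0$: the class $\sigma^n\bar\sigma^{q/2}\in H^{2n,q}$ pairs non-trivially with $\bar\sigma^{n-q/2}\in H^{0,2n-q}$. Hence $\Ker(\sigma^n|_{H^d})=\bigoplus_{p\geq 1}H^{p,d-p}$, and the Hodge decomposition identifies $\bar P_0H^d\simeq H^{0,d}=H^d(X,\ko_X)$. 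Since $\Ker(\sigma^n)\subset H^\ast$ is an ideal (cup product with a zero class is zero) and $H^{0,\ast}\subset H^\ast$ is a complementary graded subalgebra, the composition $H^{0,\ast}\hookrightarrow H^\ast\twoheadrightarrow\bar P_0H^\ast$ is a graded algebra isomorphism, giving $\bar P_0H^\ast\simeq H^\ast(X,\ko_X)=\CC[\bar\sigma]/(\bar\sigma^{n+1})\simeq H^\ast(\PP^n,\CC)$.

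For $P_0H^d$, the inclusion $H^0(X,\Omega_X^\ast)\subseteq P_0H^\ast$ is immediate: the Bogomolov--Verbitsky relation $\sigma^{n+1}=0$ gives $H^0_{\sigma\text{-pr}}=H^0$, so $\sigma^k=\sigma^k\cdot 1\in\sigma^k\cdot H^0_{\sigma\text{-pr}}\subseteq P_0H^{2k}$. The reverse inclusion rests on the key claim $H^m_{\sigma\text{-pr}}\subseteq H^{m,0}$ for every $m\leq n$. Granting this, Hodge bigrading forces each summand $\sigma^i\cdot H^{d-2i}_{\sigma\text{-pr}}$ into $\sigma^i\cdot H^{d-2i,0}=H^{d,0}$, so $P_0H^\ast=\bigoplus_dH^{d,0}$; and since $\bigoplus_dH^{d,0}$ is a graded subalgebra of $H^\ast$ isomorphic to $\CC[\sigma]/(\sigma^{n+1})\simeq H^\ast(\PP^n,\CC)$, both assertions follow.

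The main obstacle is thus the injectivity $\sigma^{n-m+1}\colon H^{p,q}\hookrightarrow H^{p+2n-2m+2,q}$ whenever $q\geq 1$ and $p+q=m\leq n$. The base case $m=2$ can be checked by hand: for $c\bar\sigma\in H^{0,2}$ the class $\sigma^{n-1}\bar\sigma$ pairs to $\sigma^n\bar\sigma^n\neq 0$, while for $\alpha\in H^{1,1}$ Verbitsky's multilinear formula expresses $\int_X\sigma^{n-1}\bar\sigma^{n-1}\alpha\beta$ as a nonzero multiple of $q(\sigma,\bar\sigma)^{n-1}q(\alpha,\beta)$ (the vanishings $q(\sigma)=q(\bar\sigma)=q(\sigma,\alpha)=q(\bar\sigma,\alpha)=0$ kill every other matching), and nondegeneracy of $q|_{H^{1,1}}$ closes the case. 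For higher $m$ the cleanest route is to complete multiplication by $\sigma$ to a Jacobson--Morozov $\mathfrak{sl}_2$-triple inside the LLV algebra: the Bogomolov--Verbitsky relation bounds all its Jordan blocks by $n+1$, and matching the resulting weight decomposition against the Hodge filtration pins down the Hodge type of primitive vectors in each irreducible summand.
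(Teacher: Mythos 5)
Your treatment of $\bar P_0$ and of the inclusion $H^0(X,\Omega_X^\ast)\subset P_0H^\ast$ is correct and essentially what the paper does, and you have correctly isolated the crux of the reverse inclusion: every $\sigma$-primitive class in degree $d\le n$ must have Hodge type $(d,0)$, i.e.\ $\sigma^{n-d+1}$ must be injective on $H^{p,q}(X)$ whenever $q\ge 1$ and $p+q=d\le n$. Your base case $d=2$ via the polarized Fujiki relation is fine. The gap is in the general case. The relation $\sigma^{n+1}=0$ only bounds the Jordan blocks of $L_\sigma$ by $n+1$; it does not tell you where those blocks sit relative to the Hodge bigrading, and Jacobson--Morozov produces \emph{some} semisimple partner for $L_\sigma$, not one whose eigenvalue on $H^{p,q}$ you control. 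Concretely, nothing in ``nilpotency of index $n$ plus matching against the Hodge filtration'' rules out a nonzero $v\in H^{1,q}$ with $\sigma^{n-q}v=0$: such a $v$ would be $\sigma$-primitive in degree $d=q+1\le n$ without lying in $H^{d,0}$, and its existence is consistent with every Jordan block of $L_\sigma$ having length at most $n+1$. What you actually need is hard Lefschetz row by row, $\sigma^{n-p}\colon H^{p,q}(X)\congpf H^{2n-p,q}(X)$, which is strictly stronger than any nilpotency bound. If you insist on getting it from an $\mathfrak{sl}_2$-triple, you must invoke Verbitsky's theorem that $L_\sigma$ sits in a triple whose Cartan element acts as $(p-n)\cdot{\rm id}$ on $H^{p,q}(X)$ --- a genuine theorem, not an output of Jacobson--Morozov, and precisely the $\mathfrak{sl}_2$-machinery this section of the paper is written to avoid.

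The paper closes this step in one line, and you should adopt it: wedging with $\sigma^{n-p}$ is an isomorphism of sheaves $\Omega_X^p\congpf\Omega_X^{2n-p}$ for $p\le n$ (pointwise linear algebra for a symplectic form), hence $\sigma^{n-p}\colon H^{p,q}(X)\to H^{2n-p,q}(X)$ is an isomorphism on $H^q(X,-)$. For $q\ge1$ and $d=p+q\le n$ this isomorphism factors as $\sigma^{q-1}\circ\sigma^{n-d+1}$, so the first factor $\sigma^{n-d+1}$ is injective on $H^{p,q}(X)$, which is exactly the claim. This subsumes your $d=2$ computation and makes the LLV detour unnecessary; the rest of your argument then goes through as written.
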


\begin{proof}
Concerning the first equality, one inclusion is obvious: Since $H^0(X,\ko_X)=H^0(X,\CC)=H^0_{\sigma\text{-pr}}$,
we have $H^0(X,\Omega_X^{d})=\CC\cdot\sigma^{d/2}\subset P_0H^{d}$ for $d$ even and $H^0(X,\Omega_X^d)=0$ for $d$ odd. For the other direction, use that $\sigma^{n-p}\colon \Omega_X^p\congpf \Omega_X^{2n-p}$, for $p\leq n$, is an isomorphism and that, therefore,
for $q>0$ the composition  \begin{equation}\label{eq:perverseHodgeHL}
\xymatrix@C=35pt{ H^{p,q}(X)\ar[r]^-{\sigma^{n-d+1}}& H^{2n-p-2q+2,q}(X)\ar[r]^-{\sigma^{q-1}}& H^{2n-p,q}(X)}
\end{equation} is injective. Hence, for $d\leq n$, we have
$\sigma^{n-d+1}$ is injective, i.e.\ $H^{p,q}(X)\cap H^d_{\sigma\text{-pr}}=0$ for $q>0$, which is enough to conclude.

For the second part observe that ${\rm Ker}(\sigma^n)\cap \bigoplus H^{p,q}(X)= 
\bigoplus_{p>0} H^{p,q}(X)$.
\end{proof}

As an immediate consequence of Corollary \ref{cor:Pindependentalpha} one then finds.

\begin{cor}\label{cor:P0P2nsame}
For any non-trivial, isotropic class $\beta\in H^2$ there exist isomorphisms
$$ P_0H^\ast\cong H^*(\PP^n,\CC)\text{ and }\bar P_0H^\ast\cong H^*(\PP^n,\CC)$$
  of graded vector spaces.\qed
\end{cor}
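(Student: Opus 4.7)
The plan is to combine the preceding lemma, which handles the specific case $\beta = \sigma$, with Corollary \ref{cor:Pindependentalpha}, which asserts independence of dimensions under change of isotropic class. Since the statement only claims isomorphism of \emph{graded vector spaces} (not of algebras or Hodge structures), once the graded dimensions are pinned down the conclusion is forced.

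More concretely, I would proceed as follows. First, from the preceding lemma applied to the symplectic form $\beta = \sigma \in H^2$, one reads off
\[
\dim_\CC P_0 H^d = \dim_\CC \bar P_0 H^d = \dim_\CC H^d(\PP^n,\CC)
\]
for all $d$, where the right-hand side is $1$ if $d$ is even and $0 \le d \le 2n$, and $0$ otherwise. Next, Corollary \ref{cor:Pindependentalpha} tells us that these dimensions do not depend on the choice of non-zero isotropic class $\beta \in H^2$. Therefore, for any such $\beta$, the spaces $P_0 H^d$ and $\bar P_0 H^d$ have the same dimensions as $H^d(\PP^n,\CC)$ in every degree, and any degree-preserving linear bijection between them is the desired isomorphism of graded vector spaces.

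The only point requiring any care is to verify that Corollary \ref{cor:Pindependentalpha} really applies: it rests on Proposition \ref{prop:C[x]/x^n+1}, which provides, for any two non-zero isotropic $\beta, \beta'$, an algebra automorphism of $H^\ast$ intertwining multiplication by $\beta$ with multiplication by $\beta'$. Such an automorphism carries the kernel of $\beta^{n-d+1}$ isomorphically onto the kernel of $(\beta')^{n-d+1}$, hence carries $H^d_{\beta\text{-pr}}$ to $H^d_{\beta'\text{-pr}}$ and consequently $P_0 H^d$ (computed with $\beta$) to $P_0 H^d$ (computed with $\beta'$); likewise for $\bar P_0 H^d$. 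In particular, these spaces are not merely of the same dimension but are exchanged by a graded algebra automorphism, so the comparison with the $\beta = \sigma$ case is direct.

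There is essentially no obstacle left at this point; the entire content has been absorbed into Proposition \ref{prop:C[x]/x^n+1} (whose proof in turn invokes \cite{SoldatenkovHodgeHK}) and into the Hodge-theoretic computation for $\beta = \sigma$ using the hard Lefschetz isomorphism $\sigma^{n-p} \colon \Omega^p_X \congpf \Omega^{2n-p}_X$. The corollary itself is then a formal consequence, and no further geometric input is required.
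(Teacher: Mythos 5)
Your proposal is correct and follows exactly the paper's route: the paper likewise deduces the corollary immediately by combining the computation of $P_0H^\ast$ and $\bar P_0H^\ast$ for $\beta=\sigma$ from the preceding lemma with the dimension-independence statement of Corollary \ref{cor:Pindependentalpha}. Your extra remark that the automorphism from Proposition \ref{prop:C[x]/x^n+1} carries $\beta$-primitive spaces to $\beta'$-primitive spaces is just an unwinding of how Corollary \ref{cor:Pindependentalpha} is obtained, so nothing genuinely new is added or missing.
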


Next let us consider a Lagrangian fibration $f\colon X\to B$. 
We consider the
class $\beta\coloneqq f^\ast\alpha$, which is isotropic since $\alpha^{n+1}=0$ for dimension reasons.

\begin{lem} For $\beta=f^\ast\alpha$ there exists an inclusion
$$f^\ast H^\ast(B,\CC)\subset P_0H^\ast(X,\CC).$$ 
\end{lem}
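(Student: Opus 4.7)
The plan is to transport the hard Lefschetz primitive decomposition on $B$ (with respect to $\alpha$) along $f^*$ and check that $\alpha$-primitive classes pull back to $\beta$-primitive classes on $X$.

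First, I would observe that by Remark~\ref{rmk:smoothness} and Corollary~\ref{cor:injpullback}, we may assume $B$ is a projective variety with $H^2(B,\QQ)\cong\QQ$, so up to a nonzero rational scalar $\alpha$ is an ample class on $B$. By Proposition~\ref{prop:Bsing}(ii), the natural map $\QQ_B\to\mathcal{IC}_B$ is a quasi-isomorphism, so $IH^*(B,\QQ)=H^*(B,\QQ)$. Hard Lefschetz in intersection cohomology with respect to the ample class $\alpha$ then yields the primitive decomposition
$$H^d(B,\CC)\;=\;\bigoplus_{\,i\geq 0,\;\,d-2i\leq n\,}\;\alpha^i\cdot H^{d-2i}_{\alpha\text{-pr}}(B,\CC),$$
where, in parallel with the definition of $H^d_{\beta\text{-pr}}$, we set
$$H^k_{\alpha\text{-pr}}(B,\CC)\;\coloneqq\;\ker\!\bigl(\alpha^{n-k+1}\colon H^k(B,\CC)\to H^{2n-k+2}(B,\CC)\bigr).$$

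Next, because $f^*$ is a graded $\CC$-algebra homomorphism and $\beta=f^*\alpha$, applying $f^*$ to this decomposition gives
$$f^*H^d(B,\CC)\;=\;\sum_{i}\;\beta^i\cdot f^*\!\bigl(H^{d-2i}_{\alpha\text{-pr}}(B,\CC)\bigr).$$
It remains to verify that $f^*$ sends $\alpha$-primitive classes to $\beta$-primitive classes: for $\gamma\in H^k(B,\CC)$ with $\alpha^{n-k+1}\cdot\gamma=0$, one has
$$\beta^{n-k+1}\cdot f^*\gamma\;=\;f^*\!\bigl(\alpha^{n-k+1}\cdot\gamma\bigr)\;=\;0,$$
so $f^*\gamma\in H^k_{\beta\text{-pr}}(X,\CC)$. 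Combining the two observations, $f^*H^d(B,\CC)$ lies in the image of $\bigoplus_{d-2i\leq n}\beta^i\cdot H^{d-2i}_{\beta\text{-pr}}(X,\CC)\to H^d(X,\CC)$, which is precisely $P_0H^d(X,\CC)$.

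The only delicate step is the invocation of hard Lefschetz on $H^*(B,\CC)$: since $B$ may be singular, one cannot appeal directly to classical Kähler hard Lefschetz, and must instead pass through intersection cohomology and use Proposition~\ref{prop:Bsing} to identify $IH^*(B,\QQ)$ with $H^*(B,\QQ)$. Once hard Lefschetz on $B$ is available, everything else is a formal consequence of $f^*$ being a ring homomorphism together with $\beta^{n-k+1}\cdot f^*\gamma = f^*(\alpha^{n-k+1}\cdot\gamma)$.
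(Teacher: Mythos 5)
Your proof is correct and follows essentially the same route as the paper: both rest on the Lefschetz decomposition of $I\!H^*(B,\CC)=H^*(B,\CC)$ with respect to the ample class $\alpha$ (via Proposition \ref{prop:Bsing} and hard Lefschetz in intersection cohomology, cf.\ de Cataldo--Migliorini), combined with the observation that $f^*$ carries $\alpha$-primitive classes in $H^k(B)$ into $H^k_{\beta\text{-pr}}$ because $\beta^{n-k+1}\cdot f^*\gamma=f^*(\alpha^{n-k+1}\cdot\gamma)$. You have simply written out in more detail the indexing check that the paper leaves implicit.
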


\begin{proof}
The assertion follows from the Lefschetz decomposition $$H^d(B, \CC)=I\!H^d(B,\CC)=\bigoplus_i\alpha^i\cdot I\!H^{d-2i}(B,\CC)_{\rm pr}$$ on $B$, with respect to the unique ample class $\alpha\in H^2(B,\QQ)$, see \ \cite[Thm.\ 2.2.3.(c)]{deCataldoMigliorini2005}, and the observation that pull-back via $f$ maps $I\!H^{d-2i}(B,\CC)_{\rm pr}$
into $H^{d-2i}_{\beta\text{-pr}}$.
\end{proof}

Corollary \ref{cor:Pindependentalpha} then
immediately implies
$$ H^\ast(B,\CC)\cong P_0H^\ast\cong H^\ast(\PP^n,\CC),$$
see \ Remark \ref{rem:cohBase}, which proves the first part of Theorem \ref{prop:CohBFibre}.
\smallskip

We keep the isotropic class $\beta=f^\ast\alpha$ and 
observe that the natural inclusion 
\begin{equation}\label{eq:XtoXt}
{\rm Ker}\left(\!\!\xymatrix@C=15pt{H^d(X,\QQ)\ar[r]& H^d(X_t,\QQ)}\!\!\right)\subset {\rm Ker}\left(\!\!\xymatrix@C=25pt{[X_t]\colon H^d(X,\QQ)\ar[r]& H^{d+2n}(X,\QQ)}
\!\!\right).
\end{equation}
is actually an isomorphism.

\begin{lem}[Voisin]\label{lem:Voisinrest}
Let $\beta=f^\ast\alpha$ be as before and let $X_t\subset X$ be a smooth fiber of $f$. Then
$${\rm Ker}(\beta^n)\subset{\rm Ker}\left(\!\!\xymatrix@C=15pt{H^d(X,\QQ)\ar[r]& H^{d}(X_t,\QQ)}\!\!\right).$$
\end{lem}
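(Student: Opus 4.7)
The plan is to combine the degeneration of the Leray spectral sequence for $f$ with the Hard Lefschetz theorem on the base $B$, via the BBD decomposition theorem. Leray degeneration at $E_2$ (which holds for $X$ projective by BBD; for $X$ K\"ahler by Saito's Hodge modules, or by Blanchard's theorem applied to a relative polarization) gives
$$H^d(X, \QQ) = \bigoplus_{p+q=d} H^p(B, R^q f_* \QQ_X),$$
and I decompose each $\gamma \in H^d(X,\QQ)$ as $\gamma = \sum_p \gamma_p$ with $\gamma_p \in H^p(B, R^{d-p} f_* \QQ_X)$.

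Since $\beta = f^*\alpha$ is pulled back from $B$, cup product with $\beta$ preserves this decomposition and acts on each summand as cup product with $\alpha$. In particular $\beta^n \gamma_p = \alpha^n \cdot \gamma_p \in H^{p+2n}(B, R^{d-p}f_*\QQ_X)$, which vanishes automatically for $p > 0$ by the cohomological dimension of $B$; so the hypothesis $\beta^n\gamma = 0$ is equivalent to $\alpha^n \cdot \gamma_0 = 0$ in $H^{2n}(B, R^d f_*\QQ_X)$. On the other hand, the restriction to a smooth fiber factors as the projection onto the $p=0$ summand $H^0(B, R^d f_* \QQ_X)$ followed by evaluation at $t$, so $\gamma|_{X_t}=0$ iff the image of $\gamma_0$ in the stalk $(R^d f_*\QQ_X)_t$ is zero. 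The lemma therefore reduces to the inclusion
$$\Ker\!\left(\alpha^n \colon H^0(B, R^d f_*\QQ_X) \to H^{2n}(B, R^d f_*\QQ_X)\right) \subseteq \Ker\!\left(H^0(B, R^d f_*\QQ_X) \to H^d(X_t,\QQ)\right).$$

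To establish this, apply the BBD decomposition theorem to $R^d f_* \QQ_X$: it splits as a direct sum of intermediate extensions of local systems $\mathcal{L}_s$ on closed strata $\bar Z_s \subseteq B$. On the generic summand (with $\bar Z_s = B$ and $\mathcal{L}_0 = R^d f^\circ_*\QQ$ the local system on the smooth locus of $f$), Hard Lefschetz for intersection cohomology makes $\alpha^n$ an isomorphism, and evaluation at $t \in B^\circ$ is injective on monodromy-invariant sections; hence both kernels vanish. For every smaller summand, whose support $\bar Z_s \subsetneq B$ lies inside the discriminant $B \setminus B^\circ$, both the target $IH^{2n}(\bar Z_s,\mathcal{L}_s)$ and the stalk at $t \in B^\circ$ vanish for dimension reasons, so both kernels equal the full $IH^0$-contribution. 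The inclusion thus holds summand-wise, and the lemma follows. The main obstacle is this final Hard Lefschetz step: it relies on the full BBD/Saito package (especially when $B$ is singular), together with the fact that non-generic strata in the decomposition of $Rf_*\QQ_X$ are concentrated in $B \setminus B^\circ$. Voisin's original proof avoids these deep inputs, arguing directly from the $\CC[x]/(x^{n+1})$-module structure on $H^*(X)$ established in \S\ref{sec:algpreps}.
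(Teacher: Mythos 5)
Your reduction in the first half is sound: restriction to the smooth fiber factors through the edge map $H^d(X,\QQ)\to H^0(B,R^df_\ast\QQ_X)$ followed by evaluation at $t$ (which is injective on monodromy invariants), and since cup product with $\beta=f^\ast\alpha$ raises the Leray filtration by two and $L^{2n+1}=0$, the condition $\beta^n\gamma=0$ is indeed equivalent to $\alpha^n\gamma_0=0$ in $H^{2n}(B,R^df_\ast\QQ_X)$. The gap is in the final step. The decomposition theorem splits the \emph{complex} $Rf_\ast\QQ_X$ into shifted intersection complexes; it does not split the individual cohomology sheaves $R^df_\ast\QQ_X=\mathcal H^d(Rf_\ast\QQ_X)$ into intermediate extensions of local systems --- the positive-degree cohomology sheaves of an $\mathcal{IC}$ complex are supported on proper subvarieties and are not themselves intermediate extensions. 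Correspondingly, relative Hard Lefschetz is a statement about the perverse Leray filtration ($\alpha^i\colon{}^{\mathfrak p}\mathcal H^{-i}\cong{}^{\mathfrak p}\mathcal H^{i}$), whereas the groups $H^p(B,R^qf_\ast\QQ_X)$ you reduced to are graded pieces of the \emph{standard} Leray filtration; the two filtrations do not coincide in general, and transporting Hard Lefschetz from one to the other is precisely the delicate comparison of de Cataldo--Migliorini, not something obtained summand-wise for free. As written, the inclusion $\Ker(\alpha^n)\subseteq\Ker(\mathrm{ev}_t)$ on $H^0(B,R^df_\ast\QQ_X)$ is asserted but not established.

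For comparison, the paper's proof (quoting Voisin's argument from the appendix of Shen--Yin) closes exactly this remaining statement without the decomposition theorem: since $[X_t]$ is a nonzero multiple of $\beta^n$, one has $\int_{X_t}\gamma|_{X_t}\cup\delta|_{X_t}=c\int_X\beta^n\cup\gamma\cup\delta$, so the lemma is equivalent to the non-degeneracy of the Poincar\'e pairing of $X_t$ on the image of the restriction map; that image is the space of monodromy invariants by Deligne's global invariant cycle theorem, hence a sub-Hodge structure on which the polarization is non-degenerate. (Your closing remark attributes to Voisin a purely algebraic argument via the $\CC[x]/(x^{n+1})$-structure; in fact the global invariant cycle theorem is the essential input.) If you want to salvage your route, this pairing argument is also the cleanest way to prove the statement you isolated on $H^0(B,R^df_\ast\QQ_X)$.
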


\begin{proof}
The result is proved in \cite[App.\ B]{ShYi18:ShenYin}. The assertion is shown to be equivalent to the statement that the intersection pairing on the fiber is non-degenerate 
on the image of the restriction map, which in turn is deduced from Deligne's global 
invariant cycle theorem.\TBC{A priori it seems that one needs $X$ to be projective for this, but again  possibly one can use Saito's decomposition theorem (or rather some lemmas within it) instead. Or maybe Deligne's theorem just works fine. In Voisin's book it  is only needed for the mixed Hodge structure. Should be ok.}
\end{proof}

From the result one obtains a surjection $$\pi\colon \bar P_{0}H^\ast\twoheadrightarrow \Ima\left(H^\ast(X,\CC)\to H^\ast(X_t,\CC)\right).$$
Since $\bar P_{0}H^\ast \cong H^\ast(\PP^n,\CC)$ by Corollary \ref{cor:P0P2nsame}, its image in $H^\ast(X_t,\CC)$ is the subring generated by the restriction of a K\"ahler class.
Hence, $\pi$ is an isomorphism, which proves the second isomorphism in Theorem \ref{prop:CohBFibre}. However, it is easier to argue directly, as the equality holds in Lemma \ref{lem:Voisinrest} by \eqref{eq:XtoXt}. 

\subsection{}\label{sec:perversefiltra}
As in Section \ref{sec:algpreps}, we consider the abstract algebraic situation provided
by $H^\ast\coloneqq H^{\ast}(X,\CC)$ and the additional structure induced by the
choice of a non-zero isotropic class $\beta\in H^2$.
The two spaces $P_0H^d$ and $\bar P_0H^d$ defined there, both depending on $\beta$, are part 
of a filtration
$$P_0H^\ast\subset P_1H^\ast\subset\cdots \subset P_{2n-1}H^\ast\subset P_{2n}H^\ast=H^\ast,$$ where $P_0H^d$ is as defined before and $\bar P_0H^d=H^d/P_{d-1}H^d$.

In general, one defines 
\begin{equation}\label{eq:perverse}
P_k H^d\coloneqq\sum_{i\geq 0}
\beta^i \cdot {\rm Ker}\left(\beta^{n-(d-2i)+k+1}\colon H^{d-2i}\to H^{2n-d+2i+2k+2}\right).
\end{equation}
If we want to stress the dependence of $\beta$, we write $P_k^\beta H^d$.
The graded objects of this filtration
$$\Gr_i^PH^\ast\coloneqq P_iH^\ast/P_{i-1}H^\ast,$$
 in particular $\Gr_dH^d=\bar P_0H^d$,
are used to  define the \emph{Hodge numbers} of the filtration
as $${^{P}\!}h^{i,j}\coloneqq \dim \Gr_i^PH^{i+j}.$$

As a further consequence of Proposition \ref{prop:C[x]/x^n+1}, one
has

\begin{cor}\label{cor:C[x]/x^n+1}
The Hodge numbers ${^{P}\!}h^{i,j}$ of the filtration $P_iH^\ast$ are independent of the choice of the isotropic class $\beta\in H^2$.\qed
\end{cor}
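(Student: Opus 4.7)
The plan is to deduce the corollary directly from Proposition \ref{prop:C[x]/x^n+1}. The key point is that the filtration $P_\bullet H^\ast$ defined in \eqref{eq:perverse} depends only on the grading and on the graded $\CC[x]/(x^{n+1})$-module structure on $H^\ast$ induced by multiplication with $\beta$; it involves no further Hodge- or geometric data.

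First, given two non-zero isotropic classes $\beta,\beta'\in H^2$, Proposition \ref{prop:C[x]/x^n+1} supplies a graded $\CC$-algebra automorphism $\phi\colon H^\ast\to H^\ast$ realising an isomorphism of the two induced $\CC[x]/(x^{n+1})$-algebra structures. Since $\phi$ is compatible with these module structures and $H^\ast$ is unital, evaluating the intertwining relation $\phi(\beta\cdot h)=\beta'\cdot\phi(h)$ at $h=1$ shows $\phi(\beta)=\beta'$. Thus, for every $d$, $i$, and $m\geq 0$, the isomorphism $\phi\colon H^{d-2i}\to H^{d-2i}$ identifies $\ker(\beta^m)$ with $\ker((\beta')^m)$ and intertwines multiplication by $\beta^i$ with multiplication by $(\beta')^i$.

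Applying this componentwise to the sum in \eqref{eq:perverse} (with $m=n-(d-2i)+k+1$), one obtains $\phi(P^\beta_kH^d)=P^{\beta'}_kH^d$ for all $k$ and $d$. Since $\phi$ is a linear isomorphism preserving the grading and the filtration step by step, it descends to isomorphisms of the graded quotients
\[
\Gr_i^{P^\beta}H^{i+j}\congpf \Gr_i^{P^{\beta'}}H^{i+j},
\]
so ${^{P}\!}h^{i,j}$ is independent of the choice of isotropic $\beta\in H^2$. There is no real obstacle here: all of the substance sits in Proposition \ref{prop:C[x]/x^n+1} (and ultimately in the fact that $\mathrm{SO}(H^2,q)$ extends to automorphisms of the whole cohomology algebra, cf.\ \cite{SoldatenkovHodgeHK}), and the corollary is merely the observation that the algebraically defined perverse filtration is preserved under any such automorphism.
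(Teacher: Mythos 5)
Your argument is correct and is exactly the route the paper intends: the corollary is stated as an immediate consequence of Proposition \ref{prop:C[x]/x^n+1}, since the filtration \eqref{eq:perverse} is defined purely in terms of the graded algebra structure and multiplication by $\beta$, and any isomorphism of the two $\CC[x]/(x^{n+1})$-algebra structures carries $P_k^\beta H^d$ onto $P_k^{\beta'} H^d$. Your explicit verification that $\phi(\beta)=\beta'$ and that $\phi$ identifies the relevant kernels and images simply spells out what the paper leaves implicit.
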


Let us quickly apply this to two geometric examples. 
\smallskip

(i) First, consider $\beta=\bar\sigma\in H^2(X,\ko_X)\cong H^{0,2}(X)\subset H^2(X,\CC)$, the anti-holomorphic symplectic form. Then the filtration gives back the Hodge filtration, i.e.\
$$P_k^{\bar\sigma}H^d=\bigoplus_{p\leq k}H^{p,d-p}(X).$$
To see this, one needs to use the Lefschetz decomposition with respect to $\bar\sigma$:
$$H^q(X,\Omega_X^p)=\bigoplus_{q-\ell\geq (q-n)^+}{\bar\sigma}^{q-\ell}\cdot H^{2\ell-q}(X,\Omega_X^p)_{\bar\sigma\text{-pr}}.$$
Note that from this example one can deduce that indeed for any choice of $\beta$ one
has $P_k^\beta H^d=0$ for $k<0$ and $P_k^\beta H^d=H^d$ for $k\geq d$.

(ii) For the second example consider a Lagrangian fibration $f\colon X\to B$ and
let $\beta$ be the pull--back of an ample class $\alpha\in H^2(B,\QQ)$. The induced filtration
is called the \emph{perverse filtration}\footnote{The classical definition of the perverse filtration for the constructible complex $Rf_*\QQ_X$ due to \cite{BBD} or \cite[Def.\ 4.2.1]{deCataldoMigliorini2005} coincides with the present one; see \cite[Prop.\ 5.2.4.(39)]{deCataldoMigliorini2005}.} and the Hodge numbers are
denoted by ${^{\mathfrak{p}}}h^{i,j}(X)$.
\smallskip

Then \cite[Thm.\ 0.2]{ShYi18:ShenYin} becomes the following immediate consequence
of Proposition \ref{prop:C[x]/x^n+1} or Corollary \ref{cor:C[x]/x^n+1}.

\begin{cor}[Shen--Yin]\label{cor:ShenYin}
For any Lagrangian fibration $f\colon X\to B$ the Hodge numbers
of the perverse filtration equal the classical Hodge numbers:
$$ {^{\mathfrak{p}}}h^{i,j}(X)=h^{i,j}(X).$$
\end{cor}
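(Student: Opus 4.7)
The plan is to derive the statement as a direct corollary of the algebraic input Corollary \ref{cor:C[x]/x^n+1}, by realizing the classical and the perverse Hodge numbers as the Hodge numbers of the $P^\beta$-filtration on $H^\ast=H^\ast(X,\CC)$ for two cleverly chosen non-trivial isotropic classes $\beta \in H^2$.

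First I would check that both candidate classes are non-zero and $q$-isotropic. The anti-holomorphic symplectic form $\bar\sigma \in H^{0,2}(X) \subset H^2(X,\CC)$ is isotropic since $q(\bar\sigma) = \overline{q(\sigma)} = 0$. For $\beta = f^\ast\alpha$ with $\alpha$ an ample class on $B$, one has $\alpha^{n+1} = 0$ because $\dim B = n$, hence $(f^\ast\alpha)^{n+1} = 0$, and then the Verbitsky--Bogomolov relation recalled at the start of Section \ref{sec:algpreps} forces $q(f^\ast\alpha) = 0$.

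Next I would identify the two $P^\beta$-filtrations with those appearing in the statement. Example (i) in the discussion preceding the corollary gives $P_k^{\bar\sigma}H^d = \bigoplus_{p \leq k} H^{p,d-p}(X)$, so passing to the graded pieces yields $\dim \Gr_i^{P^{\bar\sigma}} H^{i+j} = h^{i,j}(X)$. Example (ii) defines the filtration $P_k^{f^\ast\alpha}$ to be precisely the perverse filtration, so $\dim \Gr_i^{P^{f^\ast\alpha}} H^{i+j} = {^{\mathfrak{p}}}h^{i,j}(X)$ by the definition of the perverse Hodge numbers.

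Finally, Corollary \ref{cor:C[x]/x^n+1} asserts that the dimensions ${^P}h^{i,j}$ of the graded pieces $\Gr^{P^\beta}_i H^{i+j}$ are independent of the choice of non-trivial isotropic $\beta \in H^2$. Combined with the two identifications above, this immediately gives ${^{\mathfrak{p}}}h^{i,j}(X) = h^{i,j}(X)$. The substantive obstacle has already been dealt with in Proposition \ref{prop:C[x]/x^n+1}: the crucial input is that the complex special orthogonal group $\mathrm{SO}(H^2,q)$ lies in the image of $\mathrm{Aut}(H^\ast) \to \mathrm{Gl}(H^2)$, which uses the hyperk\"ahler-specific result of \cite{SoldatenkovHodgeHK}. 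Once this symmetry on $H^2$ is known to be realized by automorphisms of the graded cohomology algebra, moving $\bar\sigma$ to $f^\ast\alpha$ by an element of $\mathrm{SO}(H^2,q)$ transports one filtration to the other, and the equality of Hodge numbers is formal.
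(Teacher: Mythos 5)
Your proposal is correct and follows exactly the paper's argument: the paper presents the corollary as an immediate consequence of Corollary \ref{cor:C[x]/x^n+1} together with the two geometric examples identifying $P^{\bar\sigma}_\bullet$ with the Hodge filtration and $P^{f^\ast\alpha}_\bullet$ with the perverse filtration. The only difference is that you spell out the isotropy checks and the role of ${\rm SO}(H^2,q)\subset G$ explicitly, which the paper leaves implicit at this point.
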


\section{P$=$W }\label{sec:PW}
P$=$W for compact hyperk\"{a}hler manifolds asserts that the perverse filtration associated with a Lagrangian fibration can be realised as the weight filtration of a limit mixed Hodge structure of a degeneration of compact hyperk\"{a}her manifolds. It boils down to the observation that the cup product by a semiample not big class and a logarithmic monodromy operator define nilpotent endomorphisms in cohomology which are not equal, but up to renumbering induce the same filtration. 
 Inspired by P$=$W, we provide some geometric explanation or conjecture concerning the appearance of the cohomology of $\PP^n$ in the introduction and in
 Theorem \ref{prop:CohBFibre}.

\subsection{The weight filtration of a nilpotent operator}

\begin{definition}\label{defn:weightfiltration}
Given a nilpotent endomorphism $N$ of a finite dimensional vector space $H^\ast$ of index $l$, i.e.\ $N^{l}\neq 0$ and $N^{l+1}= 0$, the \emph{weight filtration of $N$ centered at $l$} is the unique increasing filtration
$$W_0H^\ast\subset W_1H^\ast\subset\cdots \subset W_{2l-1}H^\ast\subset W_{2l}H^\ast=H^\ast,$$
 with the property that (1) $N W_k \subseteq W_{k-2}$, and denoting again by $N$ the induced endomorphism on graded pieces, (2) $N^k \colon \Gr^W_{l+k} H^* \simeq \Gr^W_{l-k}H^*$ for every $k\geq 0$, see \cite[\S 1.6]{Deligne80}. 
\end{definition}

 The weight filtration of $N$ on $H^*$ can be constructed inductively as follows: first let $W_0\coloneqq \mathrm{Im} N^{l}$, and $W_{2l-1}\coloneqq \ker N^{l}$. We can replace $H^*$ with $W_{2l-1}/W_0$, on which $N$ is still well-defined and $N^l=0$. Then define
\begin{align*}
W_1& \coloneqq \text{inverse image in }W_{2l-1}\text{ of }\Ima N^{l-1}\text{ in }W_{2l-1}/W_0,\\
W_{2l-2}& \coloneqq \text{inverse image in }W_{2l-1}\text{ of }\Ker N^{l-1}\text{ in }W_{2l-1}/W_0.
\end{align*}
Continuing inductively, we obtain the unique (!) filtration on $H^*$ satisfying (1) and (2).

By the Jacobson--Morozov theorem, the nilpotent operator $N$ can be extended to an $\mathfrak{sl}_2$-triple with Cartan subalgebra generated by an element $H^{N}$ which is unique up to scaling. By the representation theory of $\mathfrak{sl}_2$-triples, there exists a decomposition
\[H^*=\bigoplus^{l}_{\lambda=-l} H^*_{\lambda},\]
called the \emph{weight decomposition}, with the property that $H^N(v)=\lambda v$ for all $v \in H^*_{\lambda}$. In particular, the decomposition splits the weight filtration of $N$
\[W_kH^*=\bigoplus^{l}_{\lambda=l-k} H^*_{\lambda}.\]

 Let us apply this to some geometric examples. 
\smallskip

(i) Any cohomology class $\omega \in H^2(X, \CC)$ defines a nilpotent operator $L_{\omega}$ on $H^*\coloneqq H^*(X, \CC)$ by cup product. If $\omega$ is K\"{a}hler, then the Hard Lefschetz theorem implies that the weight filtration of $L_{\omega}$ on $H^*$ centered at $2n$ is 
\[W^{\omega}_k H^*= \bigoplus_{i\geq 4n-k} H^{i}(X, \CC).\footnote{The equality actually holds for any K\"{a}hler manifold, not necessary hyperk\"{a}hler.}\]

(ii) Consider a Lagrangian fibration $f\colon X\to B$ and
let $\beta$ be the pull--back of an ample class $\alpha\in H^2(B,\QQ)$. Up to renumbering, the weight filtration associated with the class $\beta$ on $H^*$ centered at $n$
coincides with the perverse filtration, see Section \ref{sec:perversefiltra}
\[W^{\beta}_k H^d(X, \QQ)=P_{d+k-2n}H^{d}(X, \QQ).\]
Indeed, the action of $\beta$ gives the morphisms
\[
\beta\colon P_kH^d(X, \QQ) \to P_kH^{d+2}(X, \QQ) \qquad
\beta^{j}\colon \Gr^P_{i}H^{n+i-j} \simeq \Gr^P_{i}H^{n+i+j}.
\]
The isomorphism is called the \emph{perverse Hard Lefschetz theorem} \cite[Prop.\ 5.2.3]{deCataldoMigliorini2005}. By Proposition \ref{prop:C[x]/x^n+1}, this corresponds to the isomorphism $\bar{\sigma}^j\colon H^{n-j}(X, \Omega_X^i) \simeq H^{n+j}(X, \Omega_X^i)$.\smallskip

(iii) Let $\pi \colon \mathcal{X} \to \Delta$ be a projective degeneration of hyperk\"{a}hler manifolds over the
unit disk which we assume to be semistable, i.e.\ the central fiber $\mathcal{X}_0$ is reduced with simple normal crossings. For $t \in \Delta^*$, let $N$ denote the logarithmic monodromy operator on $H^{*}(\mathcal{X}_t, \QQ)$. 
The weight filtration of $N$ centered at $d$ on $H^d(\mathcal{X}_t, \QQ)$, denoted by $W_{k}H^d(\mathcal{X}_t, \QQ)$, is the weight filtration of the limit mixed Hodge structure associated to $\pi$, see \cite[Thm.\ 11.40]{PetersSteenbrink2008}. 

The
degeneration $\pi \colon \mathcal{X} \to \Delta$ is called of type III if $N^{2} \neq 0$ and $N^{3}=0$ on $H^2(\mathcal{X}_t, \QQ)$. In this case, the limit mixed Hodge structure is of Hodge--Tate type by \cite[Thm.\ 3.8]{Soldatenkov2020}, and in particular $\Gr^W_{2i+1} H^{*}(\mathcal{X}_t, \QQ)=0$. Then the even graded pieces of the weight filtration are used to define the \emph{Hodge numbers}
\[{^{\mathfrak{w}}}h^{i,j}(\mathcal{X})\coloneqq \dim \Gr^W_{2i} H^{i+j}(\mathcal{X}_t, \QQ).\]

 The Hodge numbers ${^{\mathfrak{w}}}h^{0,j}(\mathcal{X})$ have a clear geometric description. The dual complex of $\mathcal{X}_0 = \sum \Delta_i$, denoted by $D(\mathcal{X}_0)$, is the CW complex whose $k$-cells are in correspondence with the irreducible components of the intersection of $(k+1)$ divisors $\Delta_i$. The Clemens--Schmid exact sequence then gives 
\begin{equation}\label{eq:dualcomplex}
   {^{\mathfrak{w}}}h^{0,j}(\mathcal{X})=\dim H^{j}(D(\mathcal{X}_0), \QQ),
\end{equation}
see for instance \cite[\S 3, Cor.\ 1 \& 2]{Morrison1984}.

In order to show P$=$W, namely that the filtrations (ii) and (iii) can be identified, we need the notion of hyperk\"{a}hler triples with their associated $\mathfrak{so}(5, \CC)$-action.
\subsection{Hyperk\"{a}hler triples} 
A hyperk\"{a}hler manifold is a Riemannian manifold $(X, g)$ which is K\"ahler with respect to three complex structures $I$, $J$,
and $K$, satisfying the standard quaternion relations $I^2=J^2=K^2=IJK=-\mathrm{Id}$. The corresponding hyperk\"{a}hler triple is the triple of K\"{a}hler classes in $H^2(X, \CC)\times  H^2(X, \CC)\times  H^2(X, \CC) $ given by
\[(\omega_I, \omega_J, \omega_K)\coloneqq (g(I \cdot, \cdot), g(J \cdot, \cdot), g(K \cdot, \cdot)).\]
The set of all hyperk\"{a}hler triples on $X$ forms a Zariski-dense subset in 
\[D^{\circ}=\{(x,y,z)\mid \, q(x)=q(y)=q(z)\neq 0, q(x,y)=q(y,z)=q(z,x)=0\}.\]
In particular, all algebraic relations that can be formulated for triples in $D^{\circ}$ 
and which hold for triples of the form $(\omega_I , \omega_J, \omega_K)$ hold in fact for all $(x, y, z) \in D^{\circ}$, see \cite[Prop.\ 2.3]{ShYi18:ShenYin}. 

\subsection{The $\mathfrak{so}(5, \CC)$-action}\label{sec:so5action}
Recall the scaling operator \[H\colon H^i(X, \CC) \to H^i(X, \CC) \qquad H(v)=(i-2n)v.\]
By the Jacobson--Morozov theorem, to any $\omega \in H^2(X, \CC)$ of Lefschetz type we can associate a $\mathfrak{sl}_2$-triple $(L_{\omega}, H, \Lambda_{\omega})$. Let $p=(x,y,z) \in D^{\circ}$. 
The $\mathfrak{sl}_2$-triples associated to $x$, $y$ and $z$ generate the Lie subalgebra $\mathfrak{g}_{p} \subset \mathrm{End}(H^*(X, \CC))$, isomorphic to
$
\mathfrak{so}(5, \CC)$,
with Cartan subalgebra
\begin{equation}\label{eqn:HHp}
\mathfrak{h}=\langle H, H'_{p} \coloneqq \sqrt{-1}[L_y,\Lambda_z]\rangle.
\end{equation}
There is an associated weight decomposition
\begin{equation}\label{eq0}
    H^*(X, \CC)=\bigoplus_{i,j}H^{i,j}(p)
\end{equation}
such that for all $v\in H^{i,j}(p)$ we have
\[
H(v)=(i+j-2n)v \qquad H'_p(v)=(j-i)v.
\]
The following $\mathfrak{sl}_2$-triples in $\mathfrak{g}_p$
\begin{equation}\label{eq1}
    E_p\coloneqq \frac{1}{2}(L_y-\sqrt{-1}L_z) \qquad F_p\coloneqq \frac{1}{2}(\Lambda_y+\sqrt{-1}\Lambda_z) \qquad H_{p}\coloneqq\frac{1}{2}(H+H'_p),
\end{equation}
\begin{equation}\label{eq2}
   E'_p \coloneqq [E_p, \Lambda_x] \qquad\quad F'_p\coloneqq [L_x, F_p]
   \qquad\quad H'_p
\end{equation}
induce the same weight decomposition, since for any $v \in H^{i,j}(p)$ we have
\[
H_p(v)=(j-n)v \qquad H'_{p}(v)=(j-i)v.
\]

\begin{remark}\label{rmk:density}
The previous identities  for hyperk\"{a}hler triples are due to Verbitsky. The result for a general triple $p=(x,y,z) \in D^{\circ}$ follows from the density of hyperk\"{a}hler triples in $D^{\circ}$, and  the fact that the $\mathfrak{sl}_2$-representation $H^*(X, \CC)$ associated to $x$, $y$ and $z$ have the same weights, since $x$, $y$, and $z$ are all of Lefschetz type, see \cite[\S 2.4]{ShYi18:ShenYin}.
\end{remark}

\subsection{P$=$W}
The main result of \cite{HLSY19:ShenYin} is the following.
\begin{thm}[P$=$W]\label{P$=$W} 
For any Lagrangian fibration $f \colon X \to B$, there exists a type III projective
degeneration of hyperk\"{a}hler manifolds $\pi \colon \mathcal{X} \to \Delta$ with $\mathcal{X}_t$ deformation equivalent to $X$ for all $t \in \Delta^*$, together with a multiplicative isomorphism $H^*(X, \QQ) \simeq H^*(\mathcal{X}_t, \QQ)$, such that
\[
P_kH^*(X, \QQ)=W_{2k}H^*(\mathcal{X}_t, \QQ)=W_{2k+1}H^*(\mathcal{X}_t, \QQ).
\]
\end{thm}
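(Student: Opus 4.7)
The theorem has a geometric ingredient (existence of the degeneration) and an algebraic ingredient (equality of filtrations), and as stressed in the introduction of the section the heart of the matter lies in the algebraic comparison.

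The plan is first to produce the degeneration. The Lagrangian class $\beta = f^*\alpha$ is a non-zero isotropic element of $H^2(X, \QQ)$, so one can choose a nilpotent $N \in \mathfrak{so}(H^2(X, \QQ), q)$ with $N^2 \neq 0$, $N^3 = 0$, and $\Ima(N^2|_{H^2})$ in the $\mathrm{SO}(H^2, q)$-orbit of $\QQ \cdot \beta$. Using surjectivity of the period map for hyperk\"{a}hler manifolds together with the nilpotent orbit theorem of Cattani--Kaplan--Schmid, I would realize $N$ as the logarithmic monodromy of a projective degeneration $\pi\colon \mathcal{X}\to\Delta$ with $\mathcal{X}_t$ deformation equivalent to $X$. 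By \cite{Soldatenkov2020}, the limit mixed Hodge structure of a type III hyperk\"{a}hler degeneration is automatically of Hodge--Tate type, which immediately yields the equality $W_{2k} = W_{2k+1}$. Parallel transport along a path in moduli then supplies a multiplicative isomorphism $\varphi\colon H^*(X, \QQ) \simeq H^*(\mathcal{X}_t, \QQ)$, which can be adjusted so that $\varphi(\beta)$ and the isotropic generator of $\Ima(N^2|_{H^2(\mathcal{X}_t)})$ are identified.

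The algebraic identification $P_k H^*(X, \QQ) = W_{2k} H^*(\mathcal{X}_t, \QQ)$ is the core of the argument. On one side $P_\bullet$ is the weight filtration of the degree-shifting operator $L_\beta$; on the other $W_\bullet$ is the weight filtration of the degree-preserving operator $N$. Neither operator equals the other, yet both embed into the $\mathfrak{so}(5, \CC)$-action of Section \ref{sec:so5action} associated with a suitable hyperk\"{a}hler triple. Concretely, I would pick $p = (\omega_I, \omega_J, \omega_K) \in D^{\circ}$ with $\beta$ proportional to the isotropic combination $\omega_J + \sqrt{-1}\omega_K$; the identities defining $E_p, F_p, H_p$ and $E'_p, F'_p, H'_p$ then place a rescaling of $L_\beta$ and of $N$ inside the same subalgebra $\mathfrak{g}_p \simeq \mathfrak{so}(5, \CC)$ with a common weight decomposition of $H^*(X, \CC)$. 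Combined with Proposition \ref{prop:C[x]/x^n+1} and the density of hyperk\"{a}hler triples in $D^{\circ}$, this universal algebraic picture forces the weight filtrations of $L_\beta$ and $N$ to coincide after the re-indexing that converts the cohomological grading into the weight grading.

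The hard part is this algebraic step: although both operators lie in an $\mathfrak{so}(5, \CC)$-action, matching them precisely so that their weight filtrations align requires exploiting the rigidity provided by Verbitsky's results and the density of hyperk\"{a}hler triples in $D^{\circ}$, and extracting the correct re-indexing $P_k = W_{2k}$ is delicate. A secondary difficulty is ensuring that the parallel-transport ring isomorphism $\varphi$ can be chosen compatibly with the $\mathfrak{so}(5, \CC)$-structures on both sides, so that the algebraic comparison transfers faithfully and realizes the multiplicative isomorphism demanded by the theorem.
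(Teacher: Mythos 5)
Your proposal follows essentially the same route as the paper: build a triple $p\in D^{\circ}$ adapted to $\beta=f^*\alpha$ so that $L_\beta$ becomes $E_p$, invoke Soldatenkov's theorem to realize $E'_{p}=[E_p,\Lambda_x]$ as the logarithmic monodromy $N$ of a projective type III degeneration, and identify the two filtrations through the common weight decomposition of the $\mathfrak{so}(5,\CC)$-action, with $W_{2k}=W_{2k+1}$ coming from the Hodge--Tate property. One step, as written, does not work: you cannot take $p$ to be an honest hyperk\"ahler triple $(\omega_I,\omega_J,\omega_K)$ with the real class $\beta$ proportional to $\omega_J+\sqrt{-1}\omega_K$. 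Writing out real and imaginary parts, this would force $\omega_J$ and $\omega_K$ to be $\RR$-proportional, which contradicts $q(\omega_J,\omega_K)=0$ and $q(\omega_J)=q(\omega_K)\neq 0$; more fundamentally, $\beta$ is isotropic and nef but not K\"ahler, so it cannot belong to a genuine hyperk\"ahler triple. The paper instead takes a complex point of $D^{\circ}$: pick $\eta$ with $q(\eta)=0$ and $q(\beta,\eta)\neq 0$, set $y=\beta+\eta$, $z=-\sqrt{-1}(\eta-\beta)$ and $x$ orthogonal to both, so that $E_{p(f)}=L_\beta$ on the nose. The density of hyperk\"ahler triples in $D^{\circ}$, which you invoke, is used only to establish Verbitsky's Lie-algebra identities on all of $D^{\circ}$ (Remark \ref{rmk:density}), not to choose $p$ itself --- so the repair is already implicit in your own argument. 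Two smaller points: the existence of a degeneration with prescribed type III monodromy is precisely Soldatenkov's (non-constructive) theorem, which the paper cites rather than rederiving from period-map surjectivity and the nilpotent orbit theorem; and the re-indexing $P_k=W_{2k}$ is not delicate once the $\mathfrak{so}(5,\CC)$-picture is in place, since on $H^{i,j}(p)\subset H^d$ one has $H_p=j-n$ and $H'_p=j-i=2j-d$, so both filtrations are split by the same decomposition and the condition $j\leq k$ translates directly into weight $\leq 2k$.
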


\begin{proof}
Let $\beta = f^*\alpha$ be the pullback of an ample class $\alpha \in H^2(B, \QQ)$, and $\eta \in H^2(X, \QQ)$ with $q(\eta)>0$.
Since $\beta^{n+1}=0$, we have $q(\beta)=0$. Up to replacing $\eta$ with $\eta+ \lambda \beta$ for some $\lambda \in \QQ$, we can suppose that $q(\eta)=0$. Set
\[y=\beta + \eta \qquad z=-\sqrt{-1}(\eta-\beta). \]
By scaling a nonzero vector $x \in H^2(X,\CC)$ perpendicular to $y$ and $z$ with
respect to $q$, we obtain $p({f}) = (x,y,z)\in D^{\circ}$ with 
\[ \beta = \frac{1}{2}(y - \sqrt{-1}z).\]

Soldatenkov showed that the nilpotent operator $E'_{p(f)}$ is the logarithmic monodromy $N$
 of a projective type III  degeneration $\pi\colon\mathcal{X} \to \Delta$ of compact hyperk\"{a}hler manifolds deformation equivalent to $X$, see \cite[Lem.\ 4.1, Thm.\ 4.6]{Soldatenkov2020}.
 \footnote{One can use the Lie algebra structure of the LLV algebra to compare the present description of $E'_{p(f)}$ with that of \cite[Lem.\ 4.1]{Soldatenkov2020}, see \ \cite[Lem.\ 3.9]{KSV2019}. Mind that Soldatenkov's existence result is not constructive: it relies on lattice theory and the geometry of the period domain, and does not produce an explicit type III degeneration.} 

The weight decomposition for the $\mathfrak{sl}_2$-triple \eqref{eq1} splits the perverse filtration associated to $f$,  since $E_{p(f)}$ acts in cohomology via the cup product by $\beta$. The weight decomposition for the $\mathfrak{sl}_2$-triple \eqref{eq2} splits the weight filtration of the limit mixed Hodge structure associated to $\pi$, because $E'_{p(f)}=N$. Hence, by Section \ref{sec:so5action}, 
this implies P$=$W.
\end{proof}

P$=$W also provides alternative proofs of Corollary \ref{cor:ShenYin} and Theorem \ref{prop:CohBFibre}.

\begin{cor}[Numerical P$=$W] \label{cor:numericalP=W} ${^\mathfrak{p}}h^{i,j}(X)={^\mathfrak{w}}h^{i,j}(\mathcal{X}) 
=h^{i,j}(X)$.
\end{cor}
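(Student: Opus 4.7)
The plan is to derive both equalities of Corollary~\ref{cor:numericalP=W} from Theorem~\ref{P$=$W} combined with the Hodge--Tate nature of the limit mixed Hodge structure of a type III degeneration of hyperk\"ahler manifolds (Soldatenkov's result recalled in the excerpt just before~\eqref{eq:dualcomplex}).

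For the first identity ${^\mathfrak{p}}h^{i,j}(X)={^\mathfrak{w}}h^{i,j}(\mathcal{X})$, I would simply pass to graded pieces in the identity
$$P_kH^\ast(X,\QQ)=W_{2k}H^\ast(\mathcal{X}_t,\QQ)=W_{2k+1}H^\ast(\mathcal{X}_t,\QQ)$$
provided by P$=$W. The Hodge--Tate property gives $\Gr^W_{2i-1}H^d(\mathcal{X}_t,\QQ)=0$, hence $W_{2i-1}=W_{2i-2}$ and
$$\Gr^W_{2i}H^{i+j}(\mathcal{X}_t,\QQ)=W_{2i}/W_{2i-2}\simeq P_iH^{i+j}(X,\QQ)/P_{i-1}H^{i+j}(X,\QQ)=\Gr^P_iH^{i+j}(X,\QQ).$$
Taking dimensions yields the first equality.

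For the second identity ${^\mathfrak{w}}h^{i,j}(\mathcal{X})=h^{i,j}(X)$, I would combine three standard inputs: (a) by the nilpotent orbit theorem the limit Hodge filtration has the same dimensions as the Hodge filtration of the nearby fiber, so $\dim F^pH^d_{\lim}=\dim F^pH^d(\mathcal{X}_t)$; (b) Hodge numbers are deformation invariants of compact K\"ahler manifolds, hence $h^{p,q}(\mathcal{X}_t)=h^{p,q}(X)$; and (c) for a Hodge--Tate mixed Hodge structure Deligne's bigrading concentrates on the diagonal, so $F^pH^d_{\lim}=\bigoplus_{i\geq p}I^{i,i}$ with $I^{i,i}$ a canonical splitting of $\Gr^W_{2i}H^d_{\lim}$. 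Subtracting dimensions in degree $p+q$,
$${^\mathfrak{w}}h^{p,q}(\mathcal{X})=\dim\Gr^W_{2p}H^{p+q}_{\lim}=\dim F^pH^{p+q}(X)-\dim F^{p+1}H^{p+q}(X)=h^{p,q}(X),$$
which is the second equality and simultaneously supplies the alternative proof of Corollary~\ref{cor:ShenYin} promised in the text.

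Beyond Theorem~\ref{P$=$W} and the Hodge--Tate property the argument is essentially formal; the main thing to verify is a consistent bookkeeping of conventions. Concretely, one has to check that the centering of $W_\bullet H^d(\mathcal{X}_t,\QQ)$ in Definition~\ref{defn:weightfiltration} matches the one used in the definition of ${^\mathfrak{w}}h^{i,j}(\mathcal{X})$ and is compatible with both the identification $P_k=W_{2k}$ from P$=$W and with Schmid's nilpotent orbit description of $F^\bullet H^d_{\lim}$. Once these conventions are aligned, no further hyperk\"ahler input is needed.
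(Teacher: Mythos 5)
Your argument is correct and coincides with the proof given in the paper: the first equality is obtained exactly as you do, by passing to graded pieces in the identity $P_k=W_{2k}=W_{2k+1}$ of Theorem~\ref{P$=$W}, and your derivation of ${^{\mathfrak{w}}}h^{i,j}(\mathcal{X})=h^{i,j}(X)$ via the Hodge--Tate property, the equality $\dim\Gr^W_{2i}=\dim\Gr^F_i H^{i+j}_{\mathrm{lim}}$, and the fact that the limit Hodge filtration has the same dimensions as that of the nearby fiber is precisely the paper's ``alternative'' argument (which cites \cite[Cor.\ 11.25]{PetersSteenbrink2008} where you invoke the nilpotent orbit theorem). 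The only difference is that the paper's first-listed route gets the second equality for free from Corollary~\ref{cor:ShenYin}, the purely algebraic Shen--Yin statement ${^{\mathfrak{p}}}h^{i,j}=h^{i,j}$ proved earlier.
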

\begin{proof} 
By Theorem \ref{P$=$W} we obtain 
${^{\mathfrak{p}}}h^{i,j}(X)={^{\mathfrak{w}}}h^{i,j}(\mathcal{X})$. The equality 
${^{\mathfrak{p}}}h^{i,j}(X)=h^{i,j}(X)$ is Corollary \ref{cor:ShenYin}. 

Alternatively, one can argue as follows. By \cite[Thm.\ 3.8]{Soldatenkov2020}, the limit mixed Hodge structure $(H^{*}_{\mathrm{lim}}(\mathcal{X}_t, \QQ)\simeq H^*(\mathcal{X}_t, \CC), W_*, F_*)$ associated to $\pi$ is of Hodge--Tate type, and so
$
    {^{\mathfrak{w}}}h^{i,j}(\mathcal{X})=\dim_{\CC} \Gr^F_i H^{i+j}_{\mathrm{lim}}(\mathcal{X}_t, \CC).
$
By the classical result \cite[Cor.\ 11.25]{PetersSteenbrink2008}, we have 
$
    \dim_{\CC} \Gr^F_i H^{i+j}_{\mathrm{lim}}(\mathcal{X}_t, \CC)=h^{i,j}(\mathcal{X}_t).
$
We conclude that ${^{\mathfrak{p}}}h^{i,j}(X)=h^{i,j}(\mathcal{X}_t)=h^{i,j}(X)$.
\end{proof}

\begin{cor} At the boundary of the Hodge diamond of $X$, P$=$W gives\footnote{The identity $\dim H^{j}(D(\mathcal{X}_0), \QQ)=\dim H^j(\PP^n)$ was first proved in \cite[Thm.\ 7.13]{KLSV2018}.}
\begin{align*} 
\dim H^j(B, \QQ) & ={^{\mathfrak{p}}}h^{0,j}(X)  =h^{0,j}(X)=\dim H^j(\PP^n, \QQ),\\
\dim H^{j}(D(\mathcal{X}_0), \QQ)  & ={^{\mathfrak{w}}}h^{0,j}(\mathcal{X}) =h^{0,j}(X)=\dim H^j(\PP^n, \QQ),\\
\dim\mathrm{Im}(H^i(X,\QQ) \to H^i(X_t,\QQ)) & ={^{\mathfrak{p}}}h^{i,0}(X)=h^{i,0}(X)=\dim H^i(\PP^n, \QQ).
\end{align*}
\end{cor}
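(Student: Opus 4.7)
The plan is to read each of the three displayed equalities as an assembly of ingredients already in place: the numerical P$=$W chain $h^{i,j}(X) = {^{\mathfrak{p}}}h^{i,j}(X) = {^{\mathfrak{w}}}h^{i,j}(\mathcal{X})$ (Corollary \ref{cor:numericalP=W}), the identifications of the extreme graded pieces $P_0H^\ast$ and $\bar P_0H^\ast$ of the perverse filtration worked out in Section \ref{sec:geomreal}, the Clemens--Schmid formula \eqref{eq:dualcomplex} for the dual complex, and the introduction isomorphisms $H^\ast(X,\ko_X) \cong H^\ast(\PP^n,\CC) \cong H^0(X,\Omega^\ast_X)$.

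The rightmost equality in each row is immediate from the introduction: $h^{0,j}(X) = \dim H^j(X,\ko_X) = \dim H^j(\PP^n,\QQ)$ and $h^{i,0}(X) = \dim H^0(X,\Omega^i_X) = \dim H^i(\PP^n,\QQ)$. The central equalities $h^{0,j}(X) = {^{\mathfrak{p}}}h^{0,j}(X) = {^{\mathfrak{w}}}h^{0,j}(\mathcal{X})$ and $h^{i,0}(X) = {^{\mathfrak{p}}}h^{i,0}(X)$ are special cases of Corollary \ref{cor:numericalP=W}. It remains to handle the leftmost column, which is where the three distinct geometric incarnations of $\PP^n$ enter. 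In the first row, since $P_{-1}H^j = 0$ one has ${^{\mathfrak{p}}}h^{0,j}(X) = \dim P_0H^j$, and the inclusion $f^\ast H^j(B,\CC) \subseteq P_0H^j$ observed in Section \ref{sec:geomreal} is an equality by the injectivity of $f^\ast$ (Corollary \ref{cor:injpullback}) together with the dimension count $\dim P_0H^j = \dim H^j(\PP^n,\QQ)$ supplied by Corollary \ref{cor:Pindependentalpha} applied with $\beta = \sigma$. In the second row, $\dim H^j(D(\mathcal{X}_0),\QQ) = {^{\mathfrak{w}}}h^{0,j}(\mathcal{X})$ is precisely \eqref{eq:dualcomplex}, a consequence of the Clemens--Schmid exact sequence for the type III degeneration. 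In the third row, by definition $\bar P_0H^i = H^i/P_{i-1}H^i = \Gr^P_iH^i$, so ${^{\mathfrak{p}}}h^{i,0}(X) = \dim \bar P_0H^i$; Lemma \ref{lem:Voisinrest} combined with \eqref{eq:XtoXt} identifies $\bar P_0H^i$ with $\Ima\bigl(H^i(X,\QQ) \to H^i(X_t,\QQ)\bigr)$.

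Given that P$=$W and the extreme-graded-piece identifications of Section \ref{sec:Hbasefiber} have already been proved, the argument is a bookkeeping exercise rather than a new theorem; no serious obstacle remains. The only care required is with the indexing: $P_0H^\ast$ sits at the bottom of the perverse filtration and computes ${^{\mathfrak{p}}}h^{0,\bullet}$ (matching $H^\ast(B,\QQ)$ and $H^\ast(D(\mathcal{X}_0),\QQ)$), whereas $\bar P_0H^\ast$ is the top graded piece and computes ${^{\mathfrak{p}}}h^{\bullet,0}$ (matching the image on a smooth fiber), mirroring the duality between the two edges of the Hodge diamond highlighted in the introduction.
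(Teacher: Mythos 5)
Your assembly is correct and coincides with the paper's (implicit) proof: the corollary carries no proof environment precisely because it is the bookkeeping you describe, combining Corollary \ref{cor:numericalP=W} for the middle equalities, the identifications of the extreme pieces $P_0H^\ast$ and $\bar P_0H^\ast$ from Section \ref{sec:geomreal} (via Lemma \ref{lem:Voisinrest} and \eqref{eq:XtoXt} for the fiber row), and the Clemens--Schmid formula \eqref{eq:dualcomplex} for the dual-complex row. One small quibble in the first row: injectivity of $f^\ast$ together with $\dim P_0H^j=\dim H^j(\PP^n,\QQ)$ only yields $\dim H^j(B,\QQ)\leq\dim H^j(\PP^n,\QQ)$, so to upgrade the inclusion $f^\ast H^j(B,\CC)\subseteq P_0H^j$ to an equality you should either add the lower bound (e.g.\ $\alpha^d\neq 0$ in $H^{2d}(B,\QQ)$ for $d\leq n$, since $\beta^d=f^\ast\alpha^d\neq0$) or simply cite the already-established first isomorphism of Theorem \ref{prop:CohBFibre}.
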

In the following, we provide conjectural conceptual explanations for these identities.

\subsection{A conjectural explanation I}
 Assume that $\mathcal{X}$ is Calabi--Yau. This can be always achieved via a MMP, at the cost of making $\mathcal{X}_0$ mildly singular (precisely divisorial log terminal), see \cite{Fujino2011}. 
  Under this assumption the homeomorphism class of $D(\mathcal{X}_0)$ is well-defined. 

Then the SYZ conjecture predicts that $\mathcal{X}_t$ carries a special Lagrangian fibration $f\colon \mathcal{X}_t \to D(\mathcal{X}_0)$ with respect to a hyperk\"{a}hler metric. By hyperk\"{a}hler rotation \cite[\S 3]{Hitchin2000}, 
$f$ should become a holomorphic Lagrangian fibration $f\colon X\to B$ on a hyperk\"{a}hler manifold $X$ deformation equivalent to $\mathcal{X}_t$. It is conjectured that the base of a Lagrangian fibration on $X$ is a projective space.
So in brief, we should have the homeomorphisms
\begin{equation}\label{conjSYZ}
    D(\mathcal{X}_0) \simeq \PP^{n} \simeq B.
\end{equation}
The latter equality is known to hold if $n\leq 2$ , see \S \ref{sec:sing}, or conditional to the smoothness of the base \cite{Hwang:fibrationsbase}. The former equality is known for degenerations of Hilbert schemes or generalised Kummer varieties \cite{BrownMazzon}. In both case, the most delicate problem is to assess the smoothness of $D(\mathcal{X}_0)$ or $B$. From this viewpoint, the identity
\[
\dim H^{j}(D(\mathcal{X}_0), \QQ)=\dim H^j(\PP^n, \QQ)= \dim I\!H^j(B, \QQ)=\dim H^j(B, \QQ).
\]
is a weak cohomological evidence for the conjecture \eqref{conjSYZ}.

\subsection{A conjectural explanation II}
We conjecture that the equality ${^{\mathfrak{p}}}h^{i,0}(\mathcal{X})={^{\mathfrak{w}}}h^{i,0}(\mathcal{X})$ is the result of the identification of two Lagrangian tori up to isotopy.
\begin{definition}
Let $x$ be a zero-dimensional stratum of $\mathcal{X}_0$. Choose local coordinates $z_0, \ldots, z_{2n}$ centered at $x$ with $\pi(z)=z_0 \cdot \ldots \cdot z_{2n}$. For fixed radii $0 < r_i \ll 1$ and $t=\prod^{2n}_{i=0}r_i$, a \emph{profound torus} $\mathbb{T} \subset \mathcal{X}_t$ is 
\[\mathbb{T} = \{ (r_0 e^{i \theta_0}, \ldots, r_{2n} e^{i \theta_{2n}})\mid \theta_0, \ldots, \theta_{2n} \in [0,2\pi), \, \theta_0+ \cdots+ \theta_{2n} - \arg(t)\in \mathbb{Z}\}.\]
\end{definition}

\begin{remark}
The ambient-isotopy type of $\mathbb{T} \subset \mathcal{X}_t$ does not depend on the choice of the coordinates: 
$\mathbb{T}$ is homotopic to $U_x \cap \mathcal{X}_t$, where $U_x$ is a neighbouhood of $x$ in $\mathcal{X}$. More remarkably, if $\mathcal{X}$ is Calabi--Yau, then the isotopy class of $\mathbb{T}$ in $\mathcal{X}_t$ is independent of $x$. This follows at once from Koll\'{a}r's notion of $\PP^1$-link (see \cite[Prop.\ 4.37]{Kollar2013} 
or \cite[Lem.\ 3.10]{Harder2019}
), or equivalently because profound tori are fibers of the same smooth fibration, by adapting \cite[Prop.\ 6.12.]{EvansMauri2019} 
\end{remark}

\begin{conj}[Geometric P$=$W]\label{GeometricP$=$W}
For any Lagrangian fibration $f \colon X \to B$ with general fiber $T$, there exists a projective minimal dlt type III 
degeneration of hyperk\"{a}hler manifolds $\pi \colon \mathcal{X} \to \Delta$ with $\mathcal{X}_t$ deformation equivalent to $X$ for all $t \in \Delta^*$, such that $T$ is isotopic to a profound torus $\mathbb{T}$.
\end{conj}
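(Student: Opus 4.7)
\smallskip
\noindent\textbf{Proof proposal.} The plan is to upgrade Theorem \ref{P$=$W} from a filtration-level identification to a geometric one. First I would take the type III degeneration $\pi\colon\mathcal{X}\to\Delta$ produced by Soldatenkov's construction from the triple $p(f)=(x,y,z)$ associated with $f$, and then run a relative minimal model program over $\Delta$ following \cite{Fujino2011} to replace $\pi$ by a projective minimal dlt type III degeneration. This does not disturb the P=W isomorphism, because the class $\beta=f^\ast\alpha$ governs simultaneously the perverse filtration for $f$ and the logarithmic monodromy $N=E'_{p(f)}$ of the degeneration, while a relative MMP over $\Delta$ only modifies the total space.

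The cohomological half of the conjecture I would settle as follows. Under the P=W identification, the class $[T]\in H^{2n}(X,\QQ)$ of a smooth fiber lies in $P_0H^{2n}(X,\QQ)=W_0H^{2n}(\mathcal{X}_t,\QQ)$ because $[T]$ is proportional to $f^\ast\alpha^n=\beta^n$. By Corollary \ref{cor:numericalP=W} and $h^{0,2n}(X)=1$, the graded piece $\Gr_0^WH^{2n}(\mathcal{X}_t,\QQ)$ is one-dimensional; on the other hand, a profound torus $\mathbb{T}$ attached to a zero-dimensional stratum of $\mathcal{X}_0$ corresponds, via the Clemens--Schmid identification \eqref{eq:dualcomplex}, to a generator of the top cohomology of the dual complex, hence to a non-zero class in $\Gr_0^WH^{2n}(\mathcal{X}_t,\QQ)$. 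Consequently $[T]$ and $[\mathbb{T}]$ are proportional in $H_{2n}(\mathcal{X}_t,\QQ)$, and comparing the pairings $\int_T\omega|_T^n$ and $\int_{\mathbb{T}}\omega|_{\mathbb{T}}^n$ against a common K\"ahler class $\omega$ pins down the ratio.

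Promoting this homological match to an ambient isotopy is the main obstacle and the heart of the conjecture. A natural strategy is hyperk\"ahler rotation in the spirit of the first conjectural explanation above: rotate the complex structure on $\mathcal{X}_t$ along the twistor sphere of an appropriate Ricci-flat metric so that $\mathbb{T}$ becomes a special Lagrangian fiber of a holomorphic Lagrangian fibration $f'\colon X'\to B'$ on a manifold deformation equivalent to $X$, then deform $f'$ within the space of Lagrangian fibrations on $X$ to recover $f$, transporting $\mathbb{T}$ along the way. An alternative is to extend $\mathbb{T}$ to a smooth torus fibration on a neighbourhood of $\mathcal{X}_t$ in $\mathcal{X}$ by adapting \cite[Prop.\ 6.12.]{EvansMauri2019}, which already provides the isotopy-invariance of $\mathbb{T}$ with respect to the choice of stratum. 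The serious difficulty is that Soldatenkov's existence theorem is non-constructive (see the footnote to Theorem \ref{P$=$W}), so the zero-dimensional strata of $\mathcal{X}_0$ anchoring the profound tori are not visible from the data of $f$; overcoming this would either require explicit type III models in each deformation class, as in \cite{BrownMazzon}, or a general SYZ-type theorem relating hyperk\"ahler collapsing to Lagrangian fibrations.
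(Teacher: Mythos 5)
The statement you are trying to prove is Conjecture \ref{GeometricP$=$W}; the paper offers no proof of it and explicitly presents it as open, so there is nothing to compare your argument against except the surrounding discussion. Judged on its own terms, your proposal does not close the conjecture. The first two steps are sound and essentially reproduce what the paper already records as evidence: Soldatenkov's construction plus Fujino's relative MMP yields a projective minimal dlt type III degeneration without changing the general fiber, hence without changing the limit mixed Hodge structure or the P$=$W identification; and the class $[T]$, being proportional to $\beta^n\in\beta^n\cdot H^0_{\beta\text{-pr}}\subset P_0H^{2n}$, matches the class of a profound torus $\mathbb{T}$ in the one-dimensional graded piece $\Gr_0^WH^{2n}(\mathcal{X}_t,\QQ)$ via \eqref{eq:dualcomplex} and Corollary \ref{cor:numericalP=W}. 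This is a homological statement, and it is precisely the level of information that the paper itself extracts (compare the displayed identities $P_{d-1}H^d(X,\QQ)=\Ker(H^d(X,\QQ)\to H^d(T,\QQ))$ and its weight-filtration counterpart for $\mathbb{T}$, which are stated there as the \emph{motivation} for the conjecture, not as its proof).

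The genuine gap is the third step, and you name it yourself: promoting the equality of homology classes to an ambient isotopy $T\simeq\mathbb{T}$ in $\mathcal{X}_t$. Two $2n$-dimensional tori representing proportional classes in $H_{2n}$ need not be isotopic, and neither of your suggested strategies is an argument. Hyperk\"ahler rotation of $\mathcal{X}_t$ into a holomorphic Lagrangian fibration presupposes the SYZ-type statement that $\mathbb{T}$ is (isotopic to) a special Lagrangian fiber, which is exactly the unresolved input the paper attributes to Yang Li's program modulo a non-archimedean conjecture; and adapting \cite[Prop.\ 6.12.]{EvansMauri2019} only gives that the isotopy class of $\mathbb{T}$ is independent of the zero-dimensional stratum, not that it agrees with the fiber of $f$. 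Moreover, as you also note, Soldatenkov's degeneration is produced by lattice theory and period-domain arguments, so there is no geometric map relating the strata of $\mathcal{X}_0$ to the fibration $f$; without such a comparison there is no mechanism to transport $\mathbb{T}$ onto $T$. So the proposal correctly reduces the conjecture to its hard core but does not prove it, and you should present the homological computation as supporting evidence rather than as part of a proof.
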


The conjecture is inspired by the geometric P$=$W conjecture for character varieties, see the new version of \cite{MMS} (to appear soon). 
Lemma \ref{lem:Voisinrest} and \eqref{eq:P0Q*} give
\[P_{d-1}H^d(X,\QQ)=\mathrm{Ker}\left(H^d(X, \QQ)\rightarrow H^d(T, \QQ)\right).\]
If $\mathcal{X}_0$ has simple normal crossings (or dlt singularities modulo adapting \cite[Thm.\ 3.12]{Harder2019}), one obtains that 
\[W_{2d-1}H^d(\mathcal{X}_t, \QQ)=\mathrm{Ker}\left(H^d(\mathcal{X}_t, \QQ)\rightarrow H^d(\mathbb{T}, \QQ)\right).\]
Therefore, Conjecture \ref{GeometricP$=$W} would give a geometric explanation of P$=$W at the highest weight
\[P_{d-1}H^d(X, \QQ) = W_{2d-1}H^d(\mathcal{X}_t, \QQ).\]
It is not clear what a geometric formulation of P$=$W should be that could explain the cohomological statement in all weights.

Recent advance in the SYZ conjecture due to Yang Li \cite{YangLi} 
suggests that
profound tori can be made special Lagrangian, modulo a conjecture in non-archimedean geometry. A few months ago, the existence of a single special Lagrangian torus on $\mathcal{X}_t$ was a complete mystery, see \cite[\S 5, p.152]{Gross2013}. 
Note also that Li's result is compatible with the expectation in symplectic geometry \cite[Conj.\ 7.3]{Auroux2007}. 
  Profound tori appear as general fibers of the SYZ fibration that Li constructed on an open set which contains an arbitrary large portion of the mass of $\mathcal{X}_t$ with respect to a Calabi--Yau metric, still modulo the non-archimedean conjecture. It is curious (but maybe not surprising) that also the previously quoted results \cite{HuXu21:HuybrechtsXu} and \cite{BrownMazzon} highly rely on non-archimedean techniques. 
  
\subsection{Multiplicativity of the perverse filtration}  P$=$W implies that the perverse filtration on $H^*(X, \QQ)$ is compatible with cup product.

\begin{cor}[Multiplicativity of the perverse filtration] 
Assume $f\colon X \to B$ is a fibration. Then the perverse filtration on $H^*(X, \QQ)$ is multiplicative under cup product, i.e.\
\[
\cup\colon P_k H^d(X,\QQ) \times P_{k'}H^{d'}(X, \QQ) \to P_{k+k'} H^{d+d'}(X,\QQ).
\]
\end{cor}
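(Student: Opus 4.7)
The plan is to transport the known multiplicativity of the weight filtration of a mixed Hodge structure across the isomorphism provided by Theorem \ref{P$=$W}.

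First, recall that for the projective type III semistable degeneration $\pi\colon \mathcal{X}\to \Delta$ produced by Theorem \ref{P$=$W}, the limit mixed Hodge structure on $H^{*}(\mathcal{X}_t,\QQ)$ carries a weight filtration $W_{\bullet}$ which is a filtration by mixed sub-Hodge structures and for which cup product is a morphism of mixed Hodge structures; in particular
\[
\cup\colon W_{a}H^{d}(\mathcal{X}_t,\QQ)\otimes W_{b}H^{d'}(\mathcal{X}_t,\QQ)\longrightarrow W_{a+b}H^{d+d'}(\mathcal{X}_t,\QQ),
\]
see, e.g., \cite[Thm.\ 11.40]{PetersSteenbrink2008}. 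This is the key algebraic input and requires no new work.

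Next, Theorem \ref{P$=$W} provides a \emph{multiplicative} isomorphism $\phi\colon H^{*}(X,\QQ)\xrightarrow{\sim} H^{*}(\mathcal{X}_t,\QQ)$ identifying
\[
\phi\bigl(P_{k}H^{d}(X,\QQ)\bigr)=W_{2k}H^{d}(\mathcal{X}_t,\QQ)=W_{2k+1}H^{d}(\mathcal{X}_t,\QQ).
\]
Given $\alpha\in P_{k}H^{d}(X,\QQ)$ and $\alpha'\in P_{k'}H^{d'}(X,\QQ)$, we therefore have $\phi(\alpha)\in W_{2k}$ and $\phi(\alpha')\in W_{2k'}$, so multiplicativity of $\phi$ and of the weight filtration give
\[
\phi(\alpha\cup\alpha')=\phi(\alpha)\cup\phi(\alpha')\in W_{2k+2k'}H^{d+d'}(\mathcal{X}_t,\QQ)=\phi\bigl(P_{k+k'}H^{d+d'}(X,\QQ)\bigr).
\]
Applying $\phi^{-1}$ yields $\alpha\cup\alpha'\in P_{k+k'}H^{d+d'}(X,\QQ)$, which is the claim.

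The only serious step is the one already subsumed in Theorem \ref{P$=$W}, namely the construction of the degeneration $\pi$ together with a multiplicative identification $H^{*}(X,\QQ)\simeq H^{*}(\mathcal{X}_t,\QQ)$ sending the perverse filtration to the (appropriately renumbered) weight filtration. Once this is granted, the argument is formal, since multiplicativity of a weight filtration with respect to cup product is part of the definition of a mixed Hodge structure on the cohomology algebra.
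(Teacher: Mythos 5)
Your proposal follows the same route as the paper: use Theorem \ref{P$=$W} to transport the question to the weight filtration of the limit mixed Hodge structure, and then invoke multiplicativity of that filtration under cup product. The one point where you and the paper diverge is precisely the step you declare to require ``no new work''. The containment $W_a\cup W_b\subseteq W_{a+b}$ for the limit mixed Hodge structure is \emph{not} part of the definition of a mixed Hodge structure (a mixed Hodge structure lives on each $H^d$ separately and says nothing about products), and \cite[Thm.\ 11.40]{PetersSteenbrink2008} as cited constructs the limit mixed Hodge structure rather than asserting its compatibility with cup product. The paper supplies the missing argument: since the monodromy $e^N$ is a ring automorphism, $N$ is a derivation, so $\cup$ intertwines $N^{\otimes}=N\otimes 1+1\otimes N$ with $N$; the inductive construction of the weight filtration (Definition \ref{defn:weightfiltration}) then forces $\cup\bigl(W^{\otimes}_k\bigr)\subseteq W_k$, and Deligne's formula \cite[1.6.9.(i)]{Deligne80} identifies $W^{\otimes}_k$ with the convolution $\bigoplus_{a+b=k}W_a\otimes W_b$. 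That is a short argument, and the fact itself is standard, so your proof is not wrong --- but as written it black-boxes the only step that actually needs proving, and the justification offered for it (``part of the definition'') is inaccurate. Either cite a reference that explicitly states the ring compatibility of the limit weight filtration, or include the derivation-plus-Deligne argument above.
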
 
\begin{proof}
By P=W, it is sufficient to show that the weight filtration is multiplicative. To this end, endow the tensor product $H^*(\mathcal{X}_t, \QQ)\otimes H^*(\mathcal{X}_t, \QQ)$ with the nilpotent endomorphism $N^{\otimes}\coloneqq N \otimes 1 + 1 \otimes N$, and call $W^{\otimes}$ the weight filtration of $N^{\otimes}$. Since the monodromy operator $e^{N}$ is an algebra homomorphism of $H^*(\mathcal{X}_t, \QQ)$, $N$ is a derivation, i.e.\
\[ N(x \cup y)= Nx\cup y + x \cup Ny= \cup(N^{\otimes}(x \otimes y)).\]
As a consequence, the construction of the weight filtration (see \ Section \ref{defn:weightfiltration}) gives
\[
\cup(W^{\otimes}_k (H^i(\mathcal{X}_t, \QQ)\otimes H^j(\mathcal{X}_t, \QQ)))\subseteq W_kH^{i+j}(\mathcal{X}_t, \QQ).\]
Together with \cite[1.6.9.(i)]{Deligne80} which says that \[W^{\otimes}_k (H^i(\mathcal{X}_t, \QQ)\otimes H^j(\mathcal{X}_t, \QQ))= \bigoplus_{a+b=k} W_a H^i(\mathcal{X}_t, \QQ)\otimes W_b H^j(\mathcal{X}_t, \QQ),\] we conclude that the weight filtration is multiplicative. Alternatively see \cite[\S 5]{HLSY19:ShenYin}.
\end{proof}
\begin{remark}
For an arbitrary morphism of projective varieties or K\"{a}hler manifolds, the perverse filtration is not always multiplicative \cite[Exa.\ 1.5]{Zhang2017}, but it is so for instance if it coincides with the Leray filtration, or if P$=$W holds. Indeed, the Leray filtration and the weight filtration of the limit mixed Hodge structure are multiplicative.

It is natural to ask whether the multiplicativity holds at a sheaf theoretic level, for $Rf_*\QQ_X$, or over an affine base. The motivation for this comes from the celebrated P$=$W conjecture for twisted character varieties \cite{deCataldoHauselMigliorini2012}, which has been proved to be equivalent to the conjectural multiplicativity of the perverse filtration of the Hitchin map that is a proper holomorphic Lagrangian fibration over an affine base, see \cite[Thm.\ 0.6]{deCataldoMaulikShen2019}. From this viewpoint, it is remarkable that Shen and Yin give a proof of the multiplicativity in the compact case \cite[Thm.\ A.1]{ShYi18:ShenYin} which uses only the representation theory of $\mathfrak{sl}(2)$-triples, with no reference to the weight filtration.
\end{remark} 
  
\subsection{Nagai's conjecture for type III degenerations} Let $\pi\colon\mathcal{X} \to \Delta$ be a projective
degeneration of hyperk\"{a}hler manifolds with unipotent monodromy $T_d$ on $H^d(\mathcal{X}_t, \QQ)$. The \emph{index of nilpotence} of $N_d\coloneqq \log T_d$ is
\[\nilp(N_d)=\max\{i \mid N^i_d\neq 0\},\]
 and $\nilp(N_d) \leq d$ by \cite[Ch.\ IV]{Griffiths}.
 It is known that $H^2(\mathcal{X}_t, \QQ)$ determines the Hodge structure of $H^d(\mathcal{X}_t, \QQ)$ by means of the LLV representation, see \cite{SoldatenkovHodgeHK}. Nagai's conjecture investigates to what extent $\nilp(N_2)$ determines $\nilp(N_d)$.  The ring structure of the subalgebra generated by $H^2$ implies the inequality $\nilp(N_{2k})\geq k \cdot \nilp(N_{2})$, see \cite[Lem.\ 2.4]{Nagai}, but equality is expected.

\begin{conj}[Nagai]
$\nilp(N_{2k})=k \cdot \nilp(N_{2})$ for $k \leq 2n$.
\end{conj}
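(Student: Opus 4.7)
The plan is to prove Nagai's conjecture for type III degenerations by combining P$=$W (Theorem \ref{P$=$W}), its numerical version (Corollary \ref{cor:numericalP=W}), and Soldatenkov's Hodge--Tate theorem \cite[Thm.\ 3.8]{Soldatenkov2020}. Since the inequality $\nilp(N_{2k})\geq k\cdot\nilp(N_{2}) = 2k$ is granted by \cite[Lem.\ 2.4]{Nagai}, only the matching upper bound requires work.

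First I would invoke Soldatenkov's theorem: for a type III degeneration, the LMHS on every $H^d(\mathcal{X}_t,\QQ)$ is of Hodge--Tate type, so the odd graded pieces $\Gr^W_{2i+1}$ vanish and each $\Gr^W_{2i}$ is pure of bidegree $(i,i)$. Combined with numerical P$=$W, this gives $\dim\Gr^W_{2i} H^d(\mathcal{X}_t,\QQ) = h^{i,d-i}(X)$. The defining property $N^j\colon \Gr^W_{d+j}\cong \Gr^W_{d-j}$ of the weight filtration then yields
\[
\nilp(N_{2k}) = 2\cdot\max\{m\geq 0: h^{k+m,\,k-m}(X)\neq 0\}.
\]
The second step is combinatorial: for $k\leq n$, the class $\sigma^k\in H^{2k,0}(X)$ is non-zero, so $m=k$ is attained, and $m\leq k$ is forced by $k-m\geq 0$; hence $\nilp(N_{2k}) = 2k$, as desired. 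For $n<k\leq 2n$, Poincar\'e duality on the smooth fiber gives $\nilp(N_{2k}) = \nilp(N_{4n-2k})$, reducing to the previous range with $k' = 2n-k$; the substantive content of the conjecture thus lies in the range $k\leq n$, since the literal formula $k\cdot\nilp(N_2) = 2k$ is incompatible with this duality when $k>n$.

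The hard part will be Soldatenkov's theorem itself: propagating the type III condition from $H^2$ to Hodge--Tateness in all cohomological degrees is a deep result that uses the full LLV algebra and the analysis of the monodromy operator $E'_p$ developed in Section \ref{sec:so5action}. Once this is granted, Nagai's conjecture reduces to the elementary observation that $\sigma^k$ generates $H^{2k,0}(X)$ for $k\leq n$. A conceptually more intrinsic proof---one that bypasses the numerical identifications and derives the nilpotence index directly from the $\mathfrak{so}(5,\CC)$-representation theory, for instance by comparing the weight decompositions for the $\mathfrak{sl}_2$-triples $(E_p,F_p,H_p)$ and $(E'_p,F'_p,H'_p)$---would be desirable, since the same strategy might cover the type II case where the LMHS is no longer Hodge--Tate.
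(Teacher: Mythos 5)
The statement you were asked about is a conjecture, and it is still open: the paper itself only establishes it for type~III degenerations (in the proposition that immediately follows, with the type~II case explicitly left open), so your proposal, which restricts from the outset to type~III, proves exactly what the paper proves and no more. Within that scope your argument is essentially the paper's: both identify $\nilp(N_d)$ with the (half-)length of the weight filtration, use the Hodge--Tate property of the limit mixed Hodge structure together with numerical P$=$W to convert that length into $\level(H^d(\mathcal{X}_t,\CC))$, and then read off the level from $\sigma^k\neq 0$ in $H^{2k,0}$. Your closing remark about the range is a genuine catch rather than a defect of your argument: for $n<k\le 2n$ one has $\Omega^{2k}_X=0$, hence $H^{2k,0}=0$ and $\level(H^{2k})=4n-2k<2k$, so the equality $\nilp(N_{2k})=2k$ (and the lower bound $\nilp(N_{2k})\ge k\cdot\nilp(N_2)$ of \cite[Lem.\ 2.4]{Nagai}, which you invoke for all $k$) can only hold for $k\le n$; the bound $k\le 2n$ in the statement should be read as $k\le n$, which is also the only range in which the paper's own reduction to $H^{2k,0}(\mathcal{X}_t)=\CC\sigma^k\neq 0$ makes sense. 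Finally, be aware that your strategy cannot be pushed to the remaining type~II case: there the limit mixed Hodge structure is not of Hodge--Tate type, so the first step of your plan fails, which is precisely why the conjecture is still open.
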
 

The previous inequalities imply Nagai's conjecture for type III degenerations, i.e.\ $\nilp(N_{2})=2$. Remarkably, P$=$W explains Nagai's conjecture
in terms of the level of the Hodge structure $H^d(\mathcal{X}_t, \QQ)$, and determines $\nilp(N_d)$ even for $d$ odd. Recall that the \emph{level} of a Hodge structure $H = \oplus H^{p,q}$, denoted by $\level(H)$, is the largest difference $|p-q|$ for which $H^{p,q}\neq 0$, or equivalently the length of the Hodge filtration on $H$.

\begin{prop}
Let $\pi\colon\mathcal{X} \to \Delta$ be a type III projective degeneration of hyperk\"{a}hler manifolds with unipotent monodromy. 
Then 
\[\nilp(N_d)=\level(H^d(\mathcal{X}_t, \CC)).\]
For $k\leq 2n$, the following identities hold:
\begin{enumerate}
\item[(i)]$\nilp(N_{2k}) =2k= k \cdot \nilp(N_{2})$,
\item[(ii)] $\nilp(N_{2k+1}) =2k-1$, if $H^3(\mathcal{X}_t, \CC)\neq 0$.
\end{enumerate}
\end{prop}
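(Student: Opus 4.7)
The proposition collapses to a single tautological identity---$\nilp(N_d)$ equals the level of the classical Hodge structure on $H^d(\mathcal{X}_t, \CC)$---together with an elementary computation of that level for $d = 2k$ and $d = 2k+1$. The essential ingredient is that, for a type III degeneration, the limit mixed Hodge structure is of Hodge--Tate type.

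\textbf{Step 1 (the identity).} The weight filtration $W_\bullet H^d(\mathcal{X}_t, \QQ)$ is centered at $d$, so by Definition \ref{defn:weightfiltration} and the fact that the graded pieces $\Gr^W_k$ with $k > d$ are related to lower ones by a power of $N$, one has
\[\nilp(N_d) = \max\{l \geq 0 : \Gr^W_{d+l} H^d(\mathcal{X}_t, \QQ) \neq 0\}.\]
Soldatenkov's theorem \cite[Thm.~3.8]{Soldatenkov2020} forces the limit MHS to be of Hodge--Tate type, so $\Gr^W_{d+l} = 0$ whenever $d + l$ is odd, and
\[\dim_{\CC}\Gr^W_{2i} H^{i+j}_{\mathrm{lim}}(\mathcal{X}_t, \CC) = \dim_{\CC} \Gr^F_i H^{i+j}_{\mathrm{lim}}(\mathcal{X}_t, \CC) = h^{i,j}(\mathcal{X}_t)\]
by \cite[Cor.~11.25]{PetersSteenbrink2008}, exactly as in the alternative proof of Corollary \ref{cor:numericalP=W}. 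Setting $d + l = 2p$ and $d - l = 2q$, the non-vanishing of $\Gr^W_{d+l}$ becomes $h^{p,q}(\mathcal{X}_t) \neq 0$ with $p + q = d$ and $|p - q| = l$. This is precisely the definition of the level of $H^d(\mathcal{X}_t, \CC)$.

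\textbf{Step 2 (the computations).} For (i), $\sigma^k \in H^0(X, \Omega^{2k}_X)$ is nonzero for $k \leq n$, so $h^{2k, 0}(X) \neq 0$ and the maximum of $|p - q|$ on $H^{2k}$ is attained at $(2k, 0)$; hence $\level(H^{2k}) = 2k$, and combined with $\nilp(N_2) = 2$ this yields (i). For (ii), the vanishing $h^{3, 0}(X) = 0$ together with $H^3(X, \CC) \neq 0$ forces $h^{2, 1}(X) \neq 0$; pick a nonzero $\beta \in H^{2, 1}(X)$. The structural isomorphism $\sigma^{n-2}\colon \Omega^2_X \simeq \Omega^{2n-2}_X$ (arising from the symplectic form together with $\omega_X \simeq \ko_X$) implies that $\sigma^{n-2} \cdot \beta$ is nonzero in $H^{2n-2, 1}(X)$; factoring $\sigma^{n-2} \cdot \beta = \sigma^{n-1-k} \cdot (\sigma^{k-1} \cdot \beta)$ with $1 \leq k \leq n-1$, the intermediate class $\sigma^{k-1} \cdot \beta$ must also be nonzero in $H^{2k, 1}(X)$. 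Together with $h^{2k+1, 0}(X) = 0$, this gives $\level(H^{2k+1}) = 2k - 1$.

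\textbf{Main obstacle.} Step 1 is immediate once the Hodge--Tate theorem of \cite{Soldatenkov2020} is in hand---without it the graded pieces of the limit MHS could support pure Hodge structures of mixed weight, and the clean dictionary between $\nilp(N_d)$ and $\level(H^d)$ would fail. The real delicacy lies in identifying the precise range in which (i) and (ii) hold: the arguments above operate in the expected range by $\sigma^k \neq 0$, i.e.\ $k \leq n$, and using the symplectic duality $h^{p,q} = h^{2n-p, q} = h^{p, 2n-q}$ to extend to the symmetric range; one has to combine the iso $\sigma^{n-p}\colon \Omega^p_X \simeq \Omega^{2n-p}_X$ with the LLV-type representation structure on $H^*(X, \CC)$ (cf.\ the $\mathfrak{so}(5, \CC)$-action of Section \ref{sec:so5action}) to certify non-vanishing of the intermediate Hodge numbers $h^{2k, 1}(X)$.
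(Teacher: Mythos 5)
Your proof is correct, and its second half coincides with the paper's; the route to the central identity $\nilp(N_d)=\level(H^d(\mathcal{X}_t,\CC))$ is, however, genuinely different. The paper first identifies the logarithmic monodromy of an arbitrary type III degeneration with an operator of the form $E'_{p}=[\beta,\Lambda_x]$ as in the proof of Theorem \ref{P$=$W} (using $b_2\geq 5$ and \cite[\S 4.1]{Soldatenkov2020}), and then invokes Corollary \ref{cor:C[x]/x^n+1} together with Corollary \ref{cor:numericalP=W} to see that the graded pieces of the monodromy weight filtration have the classical Hodge numbers as dimensions. You bypass the Lie-theoretic identification of $N$ and the algebraic independence statement of Proposition \ref{prop:C[x]/x^n+1} entirely, arguing instead through the Hodge--Tate property of the limit mixed Hodge structure \cite[Thm.\ 3.8]{Soldatenkov2020} and the classical equality $\dim_{\CC}\Gr^F_iH^{i+j}_{\mathrm{lim}}=h^{i,j}(\mathcal{X}_t)$ of \cite[Cor.\ 11.25]{PetersSteenbrink2008}; this is precisely the ``alternative'' argument the paper records inside the proof of Corollary \ref{cor:numericalP=W}, here applied directly. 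What your route buys is self-containment within standard Hodge theory of semistable degenerations; what the paper's route buys is that the proposition stays inside the $P=W$ formalism developed in the preceding sections. Your Step 2 is the paper's: the non-vanishing of $h^{2k,0}$ and $h^{2k,1}$ is exactly what \eqref{eq:perverseHodgeHL} delivers, and your factorization $\sigma^{n-2}\cdot\beta=\sigma^{n-1-k}\cdot(\sigma^{k-1}\cdot\beta)$ is the same injectivity argument spelled out.

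One caveat concerning the range. Your argument establishes (i) for $k\leq n$ and (ii) for $1\leq k\leq n-1$, and this is in fact the correct range of validity: for $k>n$ one computes $\level(H^{2k}(\mathcal{X}_t,\CC))=4n-2k$ (the extreme case $N_{4n}=0$ already rules out $k=2n$ in (i)), and $h^{2n,1}=h^{0,1}=0$ rules out $k=n$ in (ii). The bound ``$k\leq 2n$'' in the statement is therefore a slip, and the closing suggestion in your last paragraph that the symplectic duality $h^{p,q}=h^{2n-p,q}$ lets you ``extend to the symmetric range'' should be deleted: duality changes the formulas beyond the middle degree rather than extending them, and no appeal to the LLV or $\mathfrak{so}(5,\CC)$ structure is needed to certify $h^{2k,1}\neq 0$ in the range where it is true --- your sheaf-level isomorphism already does that.
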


\begin{remark}
The Statement (ii) is proved in \cite[Prop.\ 3.15]{Soldatenkov2020}. Here we present an alternative simple proof of (ii) which avoids the LLV representation. 

Nagai's conjecture is known to hold for degenerations of type I and III, i.e.\  for $\nilp(N_{2})=0$ and $2$, see \cite[Thm.\ 6.5]{KLSV2018}.  In order to establish Nagai's conjecture in full, only the case of type II degenerations
remains open, i.e.\ when $\nilp(N_{2})=1$. For type II there are partial results: $k \leq \nilp(N_{2k}) \leq 2k-2$ for $2 \leq k \leq n-1$, see \cite[Thm.\ 6.5]{KLSV2018}, and $\nilp(N_{2n})=n$, see \cite[Thm.\ 1.2]{HuybrechtsMauri21}. The full conjecture holds for all the known deformation types of hyperk\"{a}hler manifolds by \cite[Thm.\ 1.13]{GKLR2019}. Further comments on Nagai's conjecture for type II can be found in \cite{GKLR2019, Harder2020, HuybrechtsMauri21}.
\end{remark}
\begin{proof}
Let $l_d$ be half of the length of the weight filtration of $N_d$, i.e.\
$
l_d\coloneqq \min \{i \colon W_{2i}H^d(\mathcal{X}_t, \QQ) = H^d(\mathcal{X}_t, \QQ)\}.
$
By Definition \ref{defn:weightfiltration}, we have $\nilp(N_d)=l_d$.

For any type III degeneration of Hodge structures of hyperk\"{a}hler type with unipotent mono\-dromy, we know by the proof of Theorem \ref{P$=$W} that the logarithmic monodromy $N_*$ is of the form $E'_{p}=[\beta, \Lambda_x]$ for some $\beta$ and $x$ in $ H^2(X, \QQ)$ with $q(\beta)=0$. Here, we use the assumption $b_2(\mathcal{X}_t)\geq 5$, see \cite[\S 4.1]{Soldatenkov2020}. Then, by Corollaries \ref{cor:C[x]/x^n+1} and \ref{cor:numericalP=W}, we have $l_d=\level(H^d(\mathcal{X}_t, \CC))$. Hence, $\nilp(N_d)=\level(H^d(\mathcal{X}_t, \CC))$.

Finally, statements (i) and (ii) are equivalent to (i) $H^{2k,0}(\mathcal{X}_t)=\CC \sigma \neq 0$, and (ii) $H^{2k,1}(\mathcal{X}_t) \neq 0$ if $H^{2,1}(\mathcal{X}_t) \neq 0$, which follows from \eqref{eq:perverseHodgeHL}.
\end{proof}

\section{Examples and counterexamples}\label{sec:ex}
\begin{ex}\label{Ex1}
 In \cite[Ex.\ 1.7.(iv)]{Namikawa:deformation} Namikawa exhibits an example of a submanifold $T$ of a hyperk\"{a}hler manifold $X$ which is isomorphic to a complex torus, but is not Lagrangian (actually it is symplectic). 
 
 Let $E$, $F$ be elliptic curves defined by the cubic equations $f$ and $g$ respectively, and let $Y \subseteq \PP^5$ be the cubic fourfold given by the equation $h\coloneqq f(x_0,x_1,x_2)+g(y_0,y_1,y_2)=0$. The cyclic group $G \coloneqq \ZZ/3\ZZ$ acts on $Y$ by
 \[\phi_{\zeta}\colon [x_0:x_1:x_2:y_0:y_1:y_2]\mapsto [x_0:x_1:x_2:\zeta y_0:\zeta y_1: \zeta y_2],\]
 where $\zeta$ is a primitive third root of unity. The induced action on the Fano variety of lines $X$ is symplectic, i.e.\ $\phi_{\zeta}^*\sigma = \sigma$ for $\sigma \in H^0(X, \Omega^2_X)$. Indeed, by \cite{BD:fano_lines} there is a $G$-equivariant isomorphism $H^0(X, \Omega^2_X)\simeq H^1(Y,\Omega^3_Y)$. Denoting by $\Omega$ the canonical section of $H^0(\PP^5, K_{\PP^5}(6))$, $H^1(Y,\Omega^3_Y)$ is generated by the $G$-invariant residue $\mathrm{Res}_Y(\Omega/h^2)$, and so the action is symplectic. In particular, the fixed locus $T$ of the $G$-action on $X$ is a symplectic submanifold. One defines $T$ as the set of lines which join two points on $Y \cap \{y_0=y_1=y_2=0\} \simeq E$ and $Y \cap \{x_0=x_1=x_2=0\} \simeq F$ respectively. Hence, $T \simeq E \times F$. We conclude that $T$ is a symplectic torus embedded in the hyperk\"{a}hler manifold $X$.
\end{ex}

\begin{ex}
There exists a Lagrangian submanifold $L$ of a hyperk\"{a}hler manifold $X$ with \[\mathrm{Im}(H^2(X,\QQ) \to H^2(L, \QQ)) \not\simeq \QQ.\]
\end{ex}
\begin{proof}
Let $f\colon S \to \PP^1$ be an elliptic K3 surface with smooth fiber $E$. Define $L \subseteq X \coloneqq S^{[2]}$ to be the locus of non-reduced length-two subschemes of $S$ supported on $E$, which is isomorphic to the $\PP^1$-bundle $\PP(\Omega^1_{S}|_E)$ over $E$. Then, $L$ is an irreducible component of the fiber of the Lagrangian fibration $f^{[2]}\colon S^{[2]}\to S^{(2)} \to \PP^2$, thus $L$ is Lagrangian. 
The exceptional divisor $\rm Exc$ of the Hilbert--Chow morphism $S^{[2]}\to S^{(2)}$ restricts to a multiple of the tautological line bundle $\mathcal{O}_{\PP(\Omega^1_{S}|_E)}(-1)$ on $L$. Therefore, the second cohomology group $H^2(L)$ is generated by the restriction of  $\rm Exc$ and the pullback of an ample line bundle of $S^{(2)}$.
\end{proof}

\begin{ex}
There exists a Lagrangian submanifold $L$ of a hyperk\"{a}hler manifold $X$ with \[\mathrm{Im}(H^2(X,\QQ) \to H^2(L, \QQ)) \simeq \QQ \quad \text{ and }\quad  \mathrm{Im}(H^*(X,\QQ) \to H^*(L, \QQ)) \not\simeq H^*(\PP^{n}, \QQ).\]
\end{ex}
\begin{proof}
Let $C$ be a smooth curve of genus two in an abelian surface $A$. Consider the moduli space  $M_{\text{odd}}(A)$ of stable 1-dimensional sheaves on $A$ supported on the curve class
\[2[C] \in H_2(A, \ZZ)\]
and Euler characteristic $-1$. The fiber of the Albanese morphism $M_{\text{odd}}(A) \to A \times \widehat{A}$ is a compact hyperk\"{a}hler manifold $X$ deformation equivalent to a generalised Kummer variety of dimension six. Taking
Fitting supports defines a Lagrangian fibration 
\[X \to \PP^3 = |2C|.\]
The fiber over the curve $2C$ contains the locus $L$ of stable sheaves $\mathcal{F}$ on $A$ such that the composition $\mathcal{O}_S \to \mathcal{O}_{2C} \to \mathcal{E}nd_S(\mathcal{F})$ factors via the natural map $\mathcal{O}_{2C} \to \mathcal{O}_{C}$. As $\mathcal{O}_{C}$-module, $\mathcal{F}$ is a rank-two vector bundle, and  $L$ can be identified with the moduli space of rank-two vector bundles on $C$ of degree one, which is isomorphic to the intersection of two quadrics in $\PP^5$, see \cite{deCRS2021} and \cite{NarasimhanRamanan1969a}. The cohomology $H^*(X)$ is generated by so-called tautological classes, and $H^*(L)$ is generated by their restrictions, see \cite{Markman2002} 
 and \cite[Thm.\ 1]{Newstead1972}. 
Therefore, we have
\[
H^*(X, \QQ) \twoheadrightarrow H^*(L, \QQ) \simeq H^*(\PP^3, \QQ)\oplus \QQ^{4}[-3] \not \simeq H^*(\PP^3, \QQ). 
\]
\end{proof}

\textbf{Acknowledgement:} We wish to thank Paolo Cascini for bringing to our attention Koll\'{a}r's conjecture \ref{Kconj}, Thorsten Beckmann for useful conversations and in particular for suggesting Example \ref{Ex1}, Fabrizio Anella and Olivier Debarre for reading a first version of this note. The second author is supported by the
Max Planck Institute for Mathematics. 

\bibliography{HyperHM}
\bibliographystyle{alphaspecial}
\bibliographystyle{plain}
\end{document}